\newcommand{\logLogSlopeTriangle}[5]
{
    \pgfplotsextra
    {
        \pgfkeysgetvalue{/pgfplots/xmin}{\xmin}
        \pgfkeysgetvalue{/pgfplots/xmax}{\xmax}
        \pgfkeysgetvalue{/pgfplots/ymin}{\ymin}
        \pgfkeysgetvalue{/pgfplots/ymax}{\ymax}

        \pgfmathsetmacro{\xArel}{#1}
        \pgfmathsetmacro{\yArel}{#3}
        \pgfmathsetmacro{\xBrel}{#1-#2}
        \pgfmathsetmacro{\yBrel}{\yArel}
        \pgfmathsetmacro{\xCrel}{\xArel}

        \pgfmathsetmacro{\lnxB}{\xmin*(1-(#1-#2))+\xmax*(#1-#2)} 
        \pgfmathsetmacro{\lnxA}{\xmin*(1-#1)+\xmax*#1} 
        \pgfmathsetmacro{\lnyA}{\ymin*(1-#3)+\ymax*#3} 
        \pgfmathsetmacro{\lnyC}{\lnyA+#4*(\lnxA-\lnxB)}
        \pgfmathsetmacro{\yCrel}{\lnyC-\ymin)/(\ymax-\ymin)}

        \coordinate (A) at (rel axis cs:\xArel,\yArel);
        \coordinate (B) at (rel axis cs:\xBrel,\yBrel);
        \coordinate (C) at (rel axis cs:\xCrel,\yCrel);

        \draw[#5]   (A)-- node[pos=0.5,anchor=north] {\scriptsize{1}}
                    (B)-- 
                    (C)-- node[pos=0.,anchor=west] {\scriptsize{#4}} 
                    cycle;
    }
}
\newcommand{\email}[1]{\href{mailto:#1}{#1}}
\newtheorem{theorem}{Theorem}
\newtheorem{lemma}[theorem]{Lemma}
\theoremstyle{remark}
\newtheorem{remark}[theorem]{Remark}
\theoremstyle{definition}
\newcommand{\st}{\,:\,}
\newcommand{\Real}{\mathbb{R}}
\DeclareRobustCommand{\bvec}[1]{\boldsymbol{#1}}
  \renewcommand{\bvec}[1]{#1}%
\newcommand{\uvec}[1]{\underline{\bvec{#1}}}
\newcommand{\tvec}[1]{\widetilde{\bvec{#1}}}
\newcommand{\cvec}[1]{\bvec{\mathcal{#1}}}
\newcommand{\tens}[1]{\boldsymbol{\mathsf{#1}}}
\DeclareMathOperator{\GRAD}{\bf grad}
\DeclareMathOperator{\GRADs}{\boldsymbol{\mathsf{grad}}_{\rm s}}
\DeclareMathOperator{\GRADss}{\boldsymbol{\mathsf{grad}}_{\rm ss}}
\DeclareMathOperator{\CURL}{\bf curl}
\DeclareMathOperator{\DIV}{div}
\DeclareMathOperator{\vDIV}{\bf div}
\DeclareMathOperator{\ROT}{rot}
\DeclareMathOperator{\VROT}{\bf rot}
\DeclareMathOperator{\tr}{tr}
\newcommand{\HDrot}[1]{\bvec{H}_0(\ROT;#1)}
\newcommand{\compl}{{\rm c}}
\newcommand{\edges}[1]{\mathcal{E}_{#1}}
\newcommand{\vertices}[1]{\mathcal{V}_{#1}}
\newcommand{\ET}{\edges{T}}
\newcommand{\VT}{\vertices{T}}
\newcommand{\normal}{\bvec{n}}
\newcommand{\tangent}{\bvec{t}}
\newcommand{\Poly}[2][]{\mathcal{P}_{#1}^{#2}}
\newcommand{\vPoly}[2][]{\cvec{P}_{#1}^{#2}}
\newcommand{\tPoly}[2][]{\tens{P}_{#1}^{#2}}
\newcommand{\Roly}[1]{\cvec{R}^{#1}}
\newcommand{\cRoly}[1]{\cvec{R}^{\compl,#1}}
\newcommand{\lproj}[2]{\pi_{\mathcal{P},#2}^{#1}}
\newcommand{\vlproj}[2]{\boldsymbol{\pi}_{\cvec{P},#2}^{#1}}
\newcommand{\tlproj}[2]{\boldsymbol{\pi}_{\tens{P},#2}^{#1}}
\newcommand{\strain}{\tens{\varepsilon}}
\newcommand{\sproj}[2][T]{\bvec{\pi}_{\strain,#1}^{#2}}
\newcommand{\Rproj}[2]{\bvec{\pi}_{\cvec{R},#2}^{#1}}
\newcommand{\cRproj}[2]{\bvec{\pi}_{\cvec{R},#2}^{\compl,#1}}
\newcommand{\term}{\mathfrak{T}}
\newcommand{\Mh}{\mathcal{M}_h}
\newcommand{\Th}{\mathcal{T}_h}
\newcommand{\Ehi}{\Eh^{{\rm i}}}
\newcommand{\Ehb}{\Eh^{{\rm b}}}
\newcommand{\Eh}{\mathcal{E}_h}
\newcommand{\Vh}{\mathcal{V}_h}
\newcommand{\ST}{\mathcal{S}_T}
\newcommand{\Irot}[1]{\uvec{I}_{\bvec{\Theta},#1}^k}
\newcommand{\Irotz}[1]{\uvec{I}_{\bvec{\Theta},#1}^0}
\newcommand{\tIrot}[1]{\uvec{I}_{\bvec{\Theta},#1}^{\flat,k}}
\newcommand{\tIrotz}[1]{\uvec{I}_{\bvec{\Theta},#1}^{\flat,0}}
\newcommand{\Idis}[1]{\underline{I}_{U,#1}^k}
\newcommand{\Idisz}[1]{\underline{I}_{U,#1}^0}
\newcommand{\tGT}{\tens{G}_T^k}
\newcommand{\GsT}{\tens{G}_{{\rm s},T}^k}
\newcommand{\DT}{D_T^k}
\newcommand{\RT}{R_T^k}
\newcommand{\PT}[1][k]{\bvec{P}_{\bvec{\Theta},T}^{#1}}
\newcommand{\injHHO}[1]{\underline{\boldsymbol{\mathfrak{I}}}_{{\rm HHO},#1}^k}
\newcommand{\pTHHO}[1][k+1]{\bvec{p}_{T}^{#1}}
\newcommand{\pToneHHO}[1][k+1]{\bvec{p}_{T_1}^{#1}}
\newcommand{\pTtwoHHO}[1][k+1]{\bvec{p}_{T_2}^{#1}}
\newcommand{\phHHO}[1][k+1]{\bvec{p}_h^{#1}}
\newcommand{\Gsh}[1][k]{\tens{G}_{{\rm s},h}^{#1}}
\newcommand{\Dh}{D_h^k}
\newcommand{\GT}[1][k]{\bvec{G}_T^{#1}}
\newcommand{\uGh}[1][k]{\uvec{G}_h^{#1}}
\newcommand{\uGT}[1][k]{\uvec{G}_T^{#1}}
\newcommand{\PUT}[1][k+1]{P_{U,T}^{#1}}
\newcommand{\PUh}[1][k+1]{P_{U,h}^{#1}}
\newcommand{\norm}[2][]{\|#2\|_{#1}}
\newcommand{\seminorm}[2][]{|#2|_{#1}}
\newcommand{\vvvert}{\vert\kern-0.25ex\vert\kern-0.25ex\vert}
\DeclareMathOperator{\card}{card}
\begin{document}

\title{A discrete de Rham method for the Reissner--Mindlin plate bending problem on polygonal meshes}
\author[1]{Daniele A. Di Pietro}
\author[2]{J\'er\^ome Droniou}
\affil[1]{IMAG, Univ Montpellier, CNRS, Montpellier, France, \email{daniele.di-pietro@umontpellier.fr}}
\affil[2]{School of Mathematics, Monash University, Melbourne, Australia, \email{jerome.droniou@monash.edu}}

\maketitle

\begin{abstract}
  In this work we propose a discretisation method for the Reissner--Mindlin plate bending problem in primitive variables that supports general polygonal meshes and arbitrary order.
  The method is inspired by a two-dimensional discrete de Rham complex for which key commutation properties hold that enable the cancellation of the contribution to the error linked to the enforcement of the Kirchhoff constraint.
  Denoting by $k\ge 0$ the polynomial degree for the discrete spaces and by $h$ the meshsize, we derive for the proposed method an error estimate in $h^{k+1}$ for general $k$, as well as a locking-free error estimate for the lowest-order case $k=0$.
  The theoretical results are validated on a complete panel of numerical tests.
  \medskip\\
  \textbf{Key words.} Reissner--Mindlin plates, discrete de Rham complex, locking free method, compatible discretisations, polygonal methods
  \medskip\\
  \textbf{MSC2010.} 65N30, 65N12, 74K20, 74S05, 65N15
\end{abstract}

\section{Introduction}

In this work we propose a novel discretisation method for the Reissner--Mindlin plate bending problem in primitive variables that supports general polygonal meshes and arbitrary order.
In its lowest-order version, the method can be proved to behave robustly with respect to the plate thickness $t$.
Its design is based on the two-dimensional discrete de Rham (DDR) complex of \cite{Di-Pietro.Droniou:21}, for which key commutation properties hold that enable the cancellation of the contribution to the error linked to the enforcement of the Kirchhoff constraint.

We consider in what follows an elastic plate of thickness $t>0$ with reference configuration $\Omega\times\left(-\frac{t}2,\frac{t}2\right)$, where $\Omega\subset\Real^2$ is a bounded connected polygonal domain with boundary $\partial\Omega$.
Without loss of generality, it is assumed in what follows that $\Omega$ has diameter 1 and that $t<1$.
The Reissner--Mindlin model describes the deformation of the plate in terms of the rotation $\bvec{\theta}:\Omega\to\Real^2$ of the fibers initially perpendicular to its midsurface and of the transverse displacement $u:\Omega\to\Real$.
Introducing the shear strain $\tens{\gamma}$ and denoting by $f:\Omega\to\Real$ the transverse load, the strong formulation of the model with clamped boundary conditions reads
\begin{subequations}\label{eq:rm.strong}
  \begin{alignat}{4}
    \label{eq:rm.balance1}
    \tens{\gamma}+\vDIV(\tens{C}\GRADs\bvec{\theta}) &= \bvec{0}&\qquad&\text{in $\Omega$},\\
    \label{eq:rm.balance2}
    -\DIV\bvec{\gamma} &=f &\qquad&\text{in $\Omega$},\\
    \label{eq:rm.def.gamma}
    \tens{\gamma} &= \frac{\kappa}{t^2}(\GRAD u-\bvec{\theta}) &\qquad&\text{in $\Omega$},\\
    \label{eq:rm.bc}
    \bvec{\theta} &= \bvec{0},\quad u = 0 &\qquad&\text{on $\partial\Omega$}.
  \end{alignat}
\end{subequations}
Here, $\vDIV$ is the row-wise divergence of tensors, $\GRADs$ is the symmetric part of the gradient applied to vector-valued fields over $\Omega$, and $\tens{C}$ is the fourth-order tensor defined by $\tens{C}\tens{t} = \beta_0\tens{t} + \beta_1(\tr\tens{t})\tens{I}$ for all second-order tensor $\tens{t}$, with $\tens{I}$ the identity tensor. The parameters of $\tens{C}$ are $\beta_0\coloneq\frac{E}{12(1+\nu)}$ and $\beta_1\coloneq\frac{E\nu}{12(1-\nu^2)}$, where $E>0$ and $\nu\in\lbrack0,\frac12)$ are the Young modulus and Poisson ratio of the material, respectively.
The shear modulus $\kappa$ is given as $\kappa\coloneq\frac{\kappa_0E}{2(1+\nu)}$, with shear correction factor $\kappa_0$ usually taken equal to $\frac56$ for clamped plates (for a discussion on the choice of this correction factor in various configurations see, e.g., \cite{Lim:16} and references therein).
Notice that both the method and its analysis could be extended to non-isotropic materials and spatially varying coefficients (using the same approach as in \cite{Di-Pietro.Droniou:20*1} where a DDR scheme for a magnetostatic model with varying permeability is considered), but we avoid delving further into this topic here in order to keep the exposition as simple as possible.%
Denoting by $H_0^1(\Omega)$ the space of real-valued functions that are square-integrable along with their derivatives and that vanish on $\partial\Omega$ in the sense of traces, the standard weak formulation of \eqref{eq:rm.strong} hinges on the spaces $\bvec{\Theta}\coloneq H_0^1(\Omega)^2$ for the rotation and $U\coloneq H_0^1(\Omega)$ for the transverse displacement.
Specifically, assuming that $f\in L^2(\Omega)$, it reads:
Find $(\bvec{\theta},u)\in\bvec{\Theta}\times U$ such that
\begin{equation}\label{eq:weak}
  A((\bvec{\theta},u),(\bvec{\eta},v)) = \ell(v)\qquad\forall(\bvec{\eta},v)\in\bvec{\Theta}\times U,
\end{equation}
where the bilinear form $A:\left[\bvec{\Theta}\times U\right]^2\to\Real$ and the linear form $\ell:U\to\Real$ are such that, for all $(\bvec{\tau},w),(\bvec{\eta},v)\in\bvec{\Theta}\times U$,
\[
A((\bvec{\tau},w),(\bvec{\eta},v)) \coloneq a(\bvec{\tau},\bvec{\eta}) + b((\bvec{\tau},w),(\bvec{\eta},v)),\qquad
\ell(v)\coloneq\int_\Omega f v,
\]
with bilinear forms $a:\bvec{\Theta}\times\bvec{\Theta}\to\Real$ and $b:\left[\bvec{\Theta}\times U\right]^2\to\Real$ such that
\begin{align}
  \label{eq:bilinear.a}
  a(\bvec{\tau},\bvec{\eta})
  &\coloneq \beta_0\int_\Omega\GRADs\bvec{\tau}:\GRADs\bvec{\eta}
  + \beta_1\int_\Omega\DIV\bvec{\tau}~\DIV\bvec{\eta},\qquad
  \\
  \label{eq:bilinear.b}
  b((\bvec{\tau},w),(\bvec{\eta},v))
  &\coloneq
  \frac{\kappa}{t^2}\int_\Omega(\bvec{\tau} - \GRAD w)\cdot(\bvec{\eta} - \GRAD v).
\end{align}
The role of the bilinear form $b$ is to enforce the Kirchhoff constraint that, as $t\to 0$, the rotation of the normal fibers equals the gradient of the transverse displacement.
Notice that the choice of considering clamped boundary conditions is made for the sole purpose of simplifying the theoretical discussion: other standard boundary conditions can be considered with straightforward modifications.
A critical point in the numerical approximation of problem \eqref{eq:weak} is robustness for small $t$.
Methods for which error estimates uniform in $t$ can be established are commonly referred to as (shear) \emph{locking-free}.

The finite element literature for the locking-free discretisation of problem \eqref{eq:weak} on standard meshes dates back to the 1980s.
In \cite{Brezzi.Fortin:86}, the authors proposed a reformulation involving, in addition to the primitive variables $\bvec{\theta}$ and $u$, the introduction of two additional variables corresponding to the irrotational and solenoidal parts of the transverse shear strain.
This work pointed out the relevance of establishing a discrete version of the Helmholtz decomposition to obtain error estimates uniform in $t$.
A method in primitive variables was later proposed in \cite{Arnold.Falk:89}, based on a nonconforming (Crouzeix--Raviart) piecewise linear space for the displacement and a bubble-enriched continuous space for the rotation, and involving a projection in the discrete version of the bilinear form $b$.
Recent developments of these ideas, including the extension to higher orders and the use of the Taylor--Hood element pair for the underlying Stokes problem, can be found in \cite{Schedensack:17,Gallistl.Schedensack:21}.
The idea of  using reduced integration or projections in the enforcement of the Kirchhoff constraint can be found in several other works; see, e.g., \cite{Brezzi.Fortin.ea:91,Duran.Liberman:92,Onate.Zarate.ea:94,Arnold.Falk:97,Lamichhane.Meylan:17} and also \cite{Pitkaranta.Suri:00}, where upper and lower bounds for a variety of methods including the stabilised scheme of \cite{Chapelle.Stenberg:98} are derived.
A different approach, resorting to a mixed formulation where the shear strain appears as a separate unknown, is considered in \cite{Arnold.Brezzi:93}.
The key point is, in this case, the design of a suitable coupling bilinear form, for which abstract conditions are provided.
Recent results on mixed finite element schemes can be found in \cite{Lamichhane:13}; see also the references therein.
Mixed approaches inspired by fully nonconforming (discontinuous Galerkin) methods have been proposed in \cite{Arnold.Brezzi.ea:05}, later leading to choices of finite element spaces that do not require reduced integration \cite{Arnold.Brezzi.ea:07}; see also \cite{Lovadina:05,Chinosi.Lovadina.ea:06} for related developments.
  Discontinuous Galerkin methods in their weakly over-penalised symmetric formulation are considered in \cite{Bosing.Carstensen:15,Bosing.Carstensen:15*1}.

While the use of standard (e.g., simplicial conforming) meshes can be satisfactory for simple geometries and problems, it may lack flexibility in more complex situations.
The support of general meshes can greatly simplify the meshing process in the presence of small geometric features \cite{Antonietti.Giani.ea:13} and pave the way for advanced techniques such as nonconforming adaptive mesh refinement (which does not trade mesh quality for size) and mesh coarsening \cite{Bassi.Botti.ea:12,Bassi.Botti.ea:14,Di-Pietro.Specogna:16}, that are crucial to exploit high-order approximations in the presence of geometric singularities.
Owing to the onset of polygonal elements and/or hanging nodes, such strategies are inaccessible to standard conforming finite elements (although, in specific cases, variations to support them can be conceived \cite{Anand.Ovall.ea:20}).
These and similar considerations have prompted, in the last few years, the development of locking-free discretisation methods for problem \eqref{eq:weak} supporting general polygonal meshes.
A first example is provided by the low-order Mimetic Finite Difference method of \cite{Beirao-da-Veiga.Mora:11}, that hinges on transverse displacements defined at mesh vertices, rotations defined at mesh vertices and edges, and uses shear forces at edges as intermediate unknowns.
The key ingredient to establish a first-order locking-free error estimate is once again a discrete Helmholtz decomposition.
A lowest-order Virtual Element method has also been recently proposed in \cite{Beirao-da-Veiga.Mora.ea:19}, inspired by the reformulation of problem \eqref{eq:weak} originally introduced in \cite{Beirao-da-Veiga.Hughes.ea:15} in the context of Isogeometric Analysis and using the transverse displacement and shear strain as unknowns.

The DDR method proposed in this work contains several key elements of novelty.
First, to the best of our knowledge, it is the first scheme to support general polygonal meshes and high-order.
Second, it does not resort to reduced integration or projections in the discrete counterpart of the bilinear form $b$.
Third, it admits an inexpensive lowest-order version for which locking-free estimates can be rigorously established.
The starting point for the design of the scheme is the two-dimensional DDR complex of \cite[Remark 13]{Di-Pietro.Droniou:21}.
This complex satisfies a crucial commutation property between the reconstructions of the discrete displacement gradient, the continuous gradient, and the interpolators on the corresponding spaces; see \eqref{eq:tIrot:P2} below.
When performing a convergence analysis in the spirit of the Third Strang Lemma \cite{Di-Pietro.Droniou:18}, one can leverage this commutation property to cancel the error resulting from the enforcement of the Kirchhoff constraint through the discrete counterpart of the bilinear form $b$.
This remark suggests the use of DDR counterparts of the $H^1_0(\Omega)$ and $\HDrot{\Omega}$ spaces for the displacement and the rotation, respectively.
In order to have sufficient information to reconstruct a full strain tensor, the discrete $\HDrot{\Omega}$ space has to be enriched by the addition of normal components at edges.
It turns out that this enriched space can be embedded into the standard Hybrid High-Order (HHO) space for elasticity originally introduced in \cite{Di-Pietro.Ern:15} (see also \cite[Chapter 7]{Di-Pietro.Droniou:22}), so that the standard HHO construction can be exploited to design the discrete counterpart of the bilinear form $a$.
We notice, in passing, that HHO and DDR schemes for Kirchhoff--Love plates can be found, respectively, in \cite{Bonaldi.Di-Pietro.ea:18} and \cite{Di-Pietro.Droniou:22}.
With these ingredients, we establish in Theorem \ref{th:error.est} an estimate in $h^{k+1}$ (with $h$ denoting the meshsize and $k$ the polynomial degree of the DDR sequence) for the natural (coercivity) norm of the error.
The right-hand side of this estimate does not explicitly depend on $t$, but involves, as is unavoidable for high-order schemes, norms of higher order derivatives of the strain; such norms are not expected to remain bounded as $t\to 0$.
Through the introduction of novel liftings of the displacement and of the rotation, we show in Theorem \ref{th:error.est.k0} that an error estimate uniform in $t$ (and, thus, locking-free) can be established in the lowest order case $k=0$, under smoothness and boundedness assumptions on the solution that are realistic in applications.
  It is worth pointing out that, in the light of the results recently obtained in \cite[Section 6]{Beirao-da-Veiga.Dassi.ea:22}, the present method is expected to admit a Virtual Element reformulation and, therefore, be interpreted as a (generalised) finite element method based on computable projections of the basis functions and their derivatives.

The rest of the paper is organised as follows.
In Section \ref{sec:setting} we introduce the discrete setting.
Section \ref{sec:scheme} contains the statement of the discrete problem preceded by the required constructions.
The analysis of the method is carried out in Section \ref{sec:analysis}, the main theorems being stated in Section \ref{sec:results} and their proofs given in Sections \ref{sec:proof.th.error.est} and \ref{sec:proof.th.error.est.k0}.
Finally, Section \ref{sec:numerical.results} contains a complete panel of numerical results, introducing a novel analytical solution for the model, showing that the method displays, to a certain extent, a locking-free behaviour also for $k\ge 1$, and comparing in the Kirchoff--Love limit $t\to 0$ the DDR scheme to a stabilised finite element scheme.


\section{Setting}\label{sec:setting}

\subsection{Mesh}

For any measurable set $Y\subset\Real^2$, we denote by $h_Y\coloneq\sup\{|\bvec{x}-\bvec{y}|\st \bvec{x},\bvec{y}\in Y\}$ its diameter and by $|Y|$ its Hausdorff measure.
We consider meshes $\Mh\coloneq\Th\cup\Eh\cup\Vh$, where:
$\Th$ is a finite collection of open disjoint polygonal elements such that $\overline{\Omega} = \bigcup_{T\in\Th}\overline{T}$ and $h=\max_{T\in\Th}h_T>0$;
$\Eh$ is the set collecting the open polygonal edges (line segments) of the elements;
$\Vh$ is the set collecting the edge endpoints.
It is assumed, in what follows, that $(\Th,\Eh)$ matches the conditions in \cite[Assumption 7.6]{Di-Pietro.Droniou:20}.
The sets collecting the mesh edges that lie on the boundary of a mesh element $T\in\Th$ and on $\partial\Omega$ are denoted by $\ET$ and $\Ehb$, respectively. We also denote by $\Ehi=\Eh\setminus\Ehb$ the set of internal edges.
The coordinates vector of $V\in\Vh$ is denoted by $\bvec{x}_V$.

Each $E\in\Eh$ is endowed with an orientation determined by a fixed unit tangent vector $\tangent_E$; we then choose the unit normal $\normal_E$ such that $(\tangent_E,\normal_E)$ forms a right-hand system of coordinates. For $T\in\Th$ and $E\in\ET$, we set $\omega_{TE}=1$ if $\tangent_E$ points in the clockwise direction of $\partial T$, and $\omega_{TE}=-1$ otherwise.
It can be checked that $\normal_{TE}\coloneq\omega_{TE}\normal_E$ is the outer unit normal to $T$ on $E$.

\subsection{Polynomial spaces}

For any $Y\in\Th\cup\Eh$, we denote by $\Poly{\ell}(Y)$ the space spanned by the restriction to $Y$ of two-variate polynomials of total degree $\le\ell$,  with the convention that $\Poly{-1}(Y)=\{0\}$.
We additionally denote by $\lproj{\ell}{Y}$ the corresponding $L^2$-orthogonal projector.
For all $E\in\Eh$, the space $\Poly{\ell}(E)$ is isomorphic to univariate polynomials of total degree $\le\ell$ (see \cite[Proposition 1.23]{Di-Pietro.Droniou:20}).
In what follows, with a little abuse of notation, both spaces are denoted by $\Poly{\ell}(E)$.
For $Y\in\Th\cup\Eh$, the vector and tensor versions of $\Poly{\ell}(Y)$ are respectively denoted by $\vPoly{\ell}(Y)\coloneq\Poly{\ell}(Y)^2$ and $\tPoly{\ell}(Y)\coloneq\Poly{\ell}(Y)^{2\times 2}$, and the corresponding $L^2$-orthogonal projectors $\vlproj{\ell}{Y}$ and $\tlproj{\ell}{Y}$ are obtained applying $\lproj{\ell}{Y}$ component-wise.
We additionally denote by $\tPoly[\rm s]{\ell}(Y)$ the subspace of symmetric-valued functions in $\tPoly{\ell}(Y)$.

For all $T\in\Th$, let $\bvec{x}_T\in T$ be such that $T$ contains a ball centered at $\bvec{x}_T$ of radius $\rho h_T$, where $\rho$ is the mesh regularity parameter in \cite[Assumption 7.6]{Di-Pietro.Droniou:20}.
For any integer $\ell\ge 0$, we define the following relevant subspaces of $\vPoly{\ell}(T)$:
\begin{equation}\label{eq:spaces.T}
    \Roly{\ell}(T)\coloneq\VROT\Poly{\ell+1}(T),
    \qquad
    \cRoly{\ell}(T)\coloneq(\bvec{x}-\bvec{x}_T)\Poly{\ell-1}(T).
\end{equation}
We have
\begin{equation}\label{eq:decomposition:Poly.ell.F}
  \vPoly{\ell}(T) = \Roly{\ell}(T) \oplus \cRoly{\ell}(T).
\end{equation}
Notice that the direct sums in the above expression are not $L^2$-orthogonal in general.
The $L^2$-orthogonal projectors on the spaces \eqref{eq:spaces.T} are, with obvious notation, $\Rproj{\ell}{T}$, and $\cRproj{\ell}{T}$.


\section{DDR scheme}\label{sec:scheme}

The scheme for \eqref{eq:weak} is designed using spaces of unknowns from the DDR method \cite{Di-Pietro.Droniou:21} together with an enrichment inspired by HHO methods \cite{Di-Pietro.Droniou:20}.

\subsection{Spaces and interpolators}

Let a polynomial degree $k\ge 0$ be fixed and set
\[
\begin{aligned}
  \uvec{\Theta}_h^k
  &\coloneq\Big\{
  \uvec{\eta}_h=\big(
  (\bvec{\eta}_{\cvec{R},T},\bvec{\eta}_{\cvec{R},T}^\compl)_{T\in\Th},
  (\bvec{\eta}_E)_{E\in\Eh}
  \big)\st
  \begin{aligned}[t]
    &\text{$(\bvec{\eta}_{\cvec{R},T},\bvec{\eta}_{\cvec{R},T}^\compl)\in\Roly{k-1}(T)\times\cRoly{k}(T)$ for all $T\in\Th$,}
    \\
    &\text{and $\bvec{\eta}_E\in\vPoly{k}(E)$ for all $E\in\Eh$}\Big\},
  \end{aligned}
  \\
  \underline{U}_h^k
  &\coloneq\Big\{
  \underline{v}_h=\big(
  (v_T)_{T\in\Th}, v_{\Eh}
  \big)\st
  \text{$v_T\in\Poly{k-1}(T)$ for all $T\in\Th$ and $v_{\Eh}\in\Poly[\rm c]{k+1}(\Eh)$}
  \Big\},
\end{aligned}
\]
where $\Poly[\rm c]{k+1}(\Eh)$ is spanned by the functions over the mesh edge skeleton whose restriction to each edge $E\in\Eh$ is a polynomial of total degree $\le k+1$ and that are continuous at the edges endpoints.
The space $\uvec{\Theta}_h^k$ is an enrichment of the two-dimensional DDR space $\uvec{X}^k_{\CURL,h}$ with edge unknowns representing a full vector-valued field as opposed to its tangent component only; the space $\underline{U}_h^k$ coincides with the two-dimensional DDR space $\uvec{X}^k_{\GRAD,h}$.

Smooth functions are interpolated as follows:
For all $\bvec{\eta}\in H^1(\Omega)^2$ 
\begin{equation}\label{eq:Irot.h}
  \Irot{h}\bvec{\eta}\coloneq\big(
  (\Rproj{k-1}{T}\bvec{\eta}_{|T},\cRproj{k}{T}\bvec{\eta}_{|T})_{T\in\Th},
  (\vlproj{k}{E}\bvec{\eta}_{|E})_{E\in\Eh}
  \big)\in\uvec{\Theta}_h^k,
\end{equation}
while, for all $v\in C^0(\overline{\Omega})$,
\[
  \begin{gathered}
    \Idis{h} v \coloneq\big(
    (\lproj{k-1}{T} v_{|T})_{T\in\Th}, v_{\Eh}
    \big)\in \underline{U}_h^k,
    \\
    \text{%
      with $\lproj{k-1}{E}(v_{\Eh})_{|E} = \lproj{k-1}{E} v_{|E}$ for all $E\in\Eh$
      and $v_{\Eh}(\bvec{x}_V) = v(\bvec{x}_V)$ for all $V\in\Vh$.
    }
  \end{gathered}
\]

For all $T\in\Th$, we denote by $\uvec{\Theta}_T^k$ and $\underline{U}_T^k$, respectively, the restrictions of $\uvec{\Theta}_h^k$ and $\underline{U}_h^k$ to $T$, collecting the polynomial components that lie inside $T$ and on its boundary.
A similar convention is adopted for the elements of these spaces and for the interpolators.

\subsection{Discrete differential operators and potentials}

We introduce discrete versions of the differential operators and of the rotation field reconstructed from the unknowns in the discrete spaces.

\subsubsection{Discrete gradient and transverse displacement reconstruction on $\underline{U}_T^k$}

We follow here standard constructions from the DDR method.
For all $T\in\Th$, the polynomial transverse displacement gradient $\GT:\underline{U}_T^k\to\vPoly{k}(T)$ is such that, for all $\underline{v}_T\in\underline{U}_T^k$,
\begin{equation}\label{eq:def.GT}
\int_T\GT\underline{v}_T\cdot\bvec{\eta}
= -\int_Tv_T\DIV\bvec{\eta}
+ \sum_{E\in\ET}\omega_{TE}\int_E v_{\ET}(\bvec{\eta}\cdot\normal_E)
\qquad\forall\bvec{\eta}\in\vPoly{k}(T).
\end{equation}
We additionally define the transverse displacement reconstruction $\PUT:\underline{U}_T^k\to\Poly{k+1}(T)$ such that, for all $\underline{v}_T\in\underline{U}_T^k$,
\[
\int_T\PUT\underline{v}_T\DIV\bvec{\eta}
= -\int_T\GT\underline{v}_T\cdot\bvec{\eta}
+ \sum_{E\in\ET}\omega_{TE}\int_Ev_{\ET}(\bvec{\eta}\cdot\normal_E)
\qquad\forall\bvec{\eta}\in\cRoly{k+2}(T).
\]
A global transverse displacement reconstruction is obtained setting, for all $\underline{v}_h\in\underline{U}_h^k$,
\[
(\PUh\underline{v}_h)_{|T}\coloneq\PUT\underline{v}_T\qquad\forall T\in\Th.
\]
Finally, we define a global discrete transverse displacement gradient $\uGh:\underline{U}_h^k\to\uvec{\Theta}_h^k$ as follows:
For all $\underline{v}_h\in\underline{U}_h^k$,
\[
\uGh\underline{v}_h\coloneq\big(
(\Rproj{k-1}{T}\GT\underline{v}_T, \cRproj{k}{T}\GT\underline{v}_T)_{T\in\Th},
((v_{\Eh})_{|E}'\tangent_E)_{E\in\Eh}
\big),
\]
where the derivative along the edge is taken in the direction of $\tangent_E$.

To state the key commutation property used to prove the error estimates for the DDR scheme, we need to introduce a modified version of the interpolator on $\uvec{\Theta}_h^k$, which is adjusted to the account for the fact that, on the edges, the discrete gradient only encodes the tangential derivatives. The modified interpolator is $\tIrot{h}:H^1(\Omega)^2\to \uvec{\Theta}_h^k$ such that, for all $\bvec{\eta}\in H^1(\Omega)^2$,
\begin{equation}\label{eq:def.tIrot}
  \tIrot{h}\bvec{\eta}\coloneq\big(
  (\Rproj{k-1}{T}\bvec{\eta}_{|T},\cRproj{k}{T}\bvec{\eta}_{|T})_{T\in\Th},
  (\lproj{k}{E}(\bvec{\eta}_{|E}\cdot\tangent_E)~\tangent_E)_{E\in\Eh}
  \big).
\end{equation}
The commutation property is the following, obtained by considering only the face components in the 3D formula \cite[Eq.~(3.33)]{Di-Pietro.Droniou:21}:
\begin{equation}\label{eq:tIrot:P2}
  \uGh(\Idis{h}v) = \tIrot{h}(\GRAD v)\qquad\forall v\in C^1(\overline{\Omega}).
\end{equation}

\subsubsection{Discrete scalar rotor and rotation reconstruction on $\uvec{\Theta}_T^k$}\label{sec:RT.PT.on.rotations}

Let a mesh element $T\in\Th$ be fixed. The local scalar rotor operator $\RT:\uvec{\Theta}_T^k\to\Poly{k}(T)$ is such that,
for all $\uvec{\eta}_T\in\uvec{\Theta}_T^k$,
\begin{equation}\label{eq:def.RT}
  \int_T\RT\uvec{\eta}_T q
  = \int_T\bvec{\eta}_{\cvec{R},T}\cdot\VROT q
  - \sum_{E\in\ET}\omega_{TE}\int_E(\bvec{\eta}_E\cdot\tangent_E)~q
  \qquad\forall q\in\Poly{k}(T).
\end{equation}
This operator enables the reconstruction of a discrete rotation $\PT:\uvec{\Theta}_T^k\to\vPoly{k}(T)$ defined such that, for all $\uvec{\eta}_T\in\uvec{\Theta}_T^k$ and all $(\bvec{\tau},q)\in\cRoly{k}(T)\times\Poly{k+1}(T)$,
\begin{equation}\label{eq:def.PT}
\int_T\PT\uvec{\eta}_T\cdot(\bvec{\tau} + \VROT q)
=
\int_T\bvec{\eta}_{\cvec{R},T}^\compl\cdot\bvec{\tau}
+ \int_T\RT\uvec{\eta}_T~q
+ \sum_{E\in\ET}\omega_{TE}\int_E(\bvec{\eta}_E\cdot\tangent_E)~q.
\end{equation}
The scalar rotor and rotation reconstructions correspond to the face curl and tangential face potential of the DDR method \cite[Eqs. (3.15) and (3.18)]{Di-Pietro.Droniou:21}. 
We note the following property \cite[Proposition 15]{Di-Pietro.Droniou:21}: For all $\uvec{\eta}_T\in\uvec{\Theta}_T^k$,
\begin{equation} \label{eq:projections.PT}
\Rproj{k-1}{T}(\PT\uvec{\eta}_T)=\bvec{\eta}_{\cvec{R},T}\quad\mbox{ and }\quad\cRproj{k}{T}(\PT\uvec{\eta}_T)=\bvec{\eta}_{\cvec{R},T}^\compl.
\end{equation}
Hence, for all $\bvec{\eta}\in H^1(T)^2$, we have $\Rproj{k-1}{T}\big[\PT(\Irot{T}\bvec{\eta})\big]=\Rproj{k-1}{T}\bvec{\eta}$ and $\cRproj{k-1}{T}\big[\PT(\Irot{T}\bvec{\eta})\big]=\cRproj{k-1}{T}\bvec{\eta}$ (where we have used $\cRoly{k-1}(T)\subset \cRoly{k}(T)$, see \eqref{eq:spaces.T}, to write $\cRproj{k-1}{T}=\cRproj{k-1}{T}\cRproj{k}{T}$).
Combining these relations with \eqref{eq:decomposition:Poly.ell.F} written for $\ell=k-1$ and \cite[Lemma 4]{Di-Pietro.Droniou:21}, we get
\begin{equation}\label{eq:vprojkmo.PT}
  \vlproj{k-1}{T}\big[\PT(\Irot{T}\bvec{\eta})\big]
  =\vlproj{k-1}{T}\bvec{\eta}\qquad\forall \bvec{\eta}\in H^1(T)^2.
\end{equation}

\subsubsection{Discrete symmetric gradient, divergence and stabilisation on $\uvec{\Theta}_T^k$}\label{sec:GsT.DT.sT.on.rotations}

The discretisation of the bilinear form \eqref{eq:bilinear.a} requires to define a discrete symmetric gradient (and divergence) on the discrete space of rotations. Since vectors in this space have polynomial components inside the elements and on the edges, a natural approach to define such discrete differential operators comes from the Hybrid High-Order (HHO) machinery \cite{Di-Pietro.Droniou:20}.
In what follows, we let a mesh element $T\in\Th$ be fixed.

\paragraph{Gradients and divergence.}
Let us define the local (vector-valued) HHO space, extension of $\uvec{\Theta}_T^k$ in which the element component is taken in the full polynomial space:
\begin{equation}\label{eq:def.Theta.HHO}
\uvec{\Theta}^k_{{\rm HHO},T}=\left\{\uvec{w}_T=(\bvec{w}_T,(\bvec{w}_E)_{E\in\ET})\,:\,\bvec{w}_T\in\vPoly{k}(T)\,,\quad\bvec{w}_E\in\vPoly{k}(E)\quad\forall E\in\ET\right\}.
\end{equation}
The discrete rotation enables the definition of the following embedding $\injHHO{T}:\uvec{\Theta}_T^k\to \uvec{\Theta}^k_{{\rm HHO},T}$:
\begin{equation}\label{eq:def.injHHO}
\injHHO{T} \uvec{\eta}_T \coloneq (\PT\uvec{\eta}_T,(\bvec{\eta}_E)_{E\in\ET})\qquad\forall\uvec{\eta}_T\in\uvec{\Theta}_T^k.
\end{equation}
Owing to \eqref{eq:projections.PT}, $\injHHO{T}$ is indeed a one-to-one mapping.

Using HHO techniques (see in particular \cite[Section 7.2.5]{Di-Pietro.Droniou:20}) on $\injHHO{T}\uvec{\eta}_T$, we can then design the local discrete gradients (standard and symmetric) and divergence of a discrete rotation $\uvec{\eta}_T\in \uvec{\Theta}_T^k$.
Specifically, this leads to defining the rotation gradient $\tGT:\uvec{\Theta}_T^k\to\tPoly{k}(T)$ such that, for all $\uvec{\eta}_T\in\uvec{\Theta}_T^k$,
\begin{equation}\label{eq:tGT}
  \int_T\tGT\uvec{\eta}_T:\tens{t}
  =-\int_T\PT\uvec{\eta}_T\cdot(\vDIV\tens{t})
  + \sum_{E\in\ET}\omega_{TE}\int_E\bvec{\eta}_E\cdot(\tens{t}\normal_E)
  \qquad\forall\tens{t}\in\tPoly{k}(T).
\end{equation}
The local symmetric gradient $\GsT:\uvec{\Theta}_T^k\to\tPoly[\rm s]{k}(T)$ and divergence $\DT:\uvec{\Theta}_T^k\to\Poly{k}(T)$ are obtained setting, for all $\uvec{\eta}_T\in\uvec{\Theta}_T^k$,
\[
\GsT\uvec{\eta}_T\coloneq\frac12\big[
  \tGT\uvec{\eta}_T + (\tGT\uvec{\eta}_T)^\intercal
  \big],\qquad
\DT\uvec{\eta}_T\coloneq\tr(\tGT\uvec{\eta}_T).
\]
In \eqref{eq:tGT}, since $\vDIV\tens{t}\in\vPoly{k-1}(T)$ we can replace $\PT\uvec{\eta}_T$ with $\vlproj{k-1}{T}(\PT\uvec{\eta}_T)$ and thus, using \eqref{eq:vprojkmo.PT} and following the techniques of \cite[Section 7.2.5]{Di-Pietro.Droniou:20}, we obtain the commutation formula $\tGT(\Irot{T}\bvec{\eta})=\vlproj{k}{T}(\GRAD\bvec{\eta})$ for all $\bvec{\eta}\in H^1(T)^2$; this shows that $\tGT$ (hence also $\GsT$ and $\DT$) has optimal approximation properties.

\paragraph{Stabilisation.}
As usual in numerical methods for polytopal meshes, the discrete counterpart of a bilinear form such as \eqref{eq:bilinear.a} involves a consistent component (here, based on $\GsT$), and a stabilisation term. In HHO methods, the local stabilisation bilinear forms are defined through the introduction of a higher-order reconstruction. For elasticity problems involving the discrete symmetric gradient, and accounting for the embedding \eqref{eq:def.injHHO}, this leads to defining $\pTHHO:\uvec{\Theta}_{T}\to\vPoly{k+1}(T)$ by: For all $\uvec{\eta}_T\in\uvec{\Theta}_{T}$ and all $\bvec{w}\in\vPoly{k+1}(T)$,
\begin{subequations}
\label{eq:def.pTHHO}
  \begin{align}
  \label{eq:def.pTHHO.1}
  &\int_T\GRADs \pTHHO\uvec{\eta}_T:\GRADs\bvec{w}
  =
  -\int_T \PT\uvec{\eta}_T\cdot\vDIV(\GRADs \bvec{w})
  + \sum_{E\in\ET}\int_E \bvec{\eta}_E\cdot(\GRADs\bvec{w}~\normal_{TE}), \\
  \label{eq:def.pTHHO.2}
  &\int_T\GRADss\pTHHO\uvec{\eta}_T=\frac12 \sum_{E\in\ET}\int_E (\bvec{\eta}_E\otimes\normal_{TE}-\normal_{TE}\otimes\bvec{\eta}_E)\,,\mbox{ and } \\
  \label{eq:def.pTHHO.3}
  &\int_T\pTHHO\uvec{\eta}_T=\int_T\PT\uvec{\eta}_T\quad\mbox{ if $k\ge 1$},\qquad
   \int_{\partial T}\pTHHO\uvec{\eta}_T=\sum_{E\in\ET}\int_E\bvec{\eta}_E\quad\mbox{ if $k=0$}.
  \end{align}
\end{subequations}
In a similar way as for $\tGT$ above, in \eqref{eq:def.pTHHO.1} the term $\PT\uvec{\eta}_T$ can be replaced with $\vlproj{k-1}{T}(\PT\uvec{\eta}_T)$ (because $\vDIV(\GRADs \bvec{w})\in \vPoly{k-1}(T)$).
Hence, using \eqref{eq:vprojkmo.PT} and the techniques of \cite[Section 7.2.5]{Di-Pietro.Droniou:20} we see that, for $k\ge 1$,
\begin{equation}\label{eq:pTHHO.strain}
\pTHHO(\Irot{T}\bvec{\eta}) = \sproj{k+1}\bvec{\eta}\qquad\forall\bvec{\eta}\in H^1(T)^2,
\end{equation}
where $\sproj{k+1}:H^1(T)^2\to\vPoly{k+1}(T)$ is the strain projector of degree $k+1$, see \cite[Section 7.2.2]{Di-Pietro.Droniou:20}. If $k=0$, the relation \eqref{eq:pTHHO.strain} is still verified with a modified version of the strain projector (still denoted by $\sproj{1}$), inspired by the modified elliptic projector of \cite[Section 5.1.2]{Di-Pietro.Droniou:20}, whose closure equation involves the average over $\partial T$ instead of the average over $T$; this modified strain projector has the same approximation properties as the standard strain projector.

The local stabilisation is then defined by:
\[
\mathrm{s}_T(\uvec{\tau}_T,\uvec{\eta}_T)=\sum_{E\in\ET}h_T^{-1}\int_E (\bvec{\delta}_{TE}^k-\bvec{\delta}_T^k)\uvec{\tau}_T \cdot (\bvec{\delta}_{TE}^k-\bvec{\delta}_T^k)\uvec{\eta}_T\quad \forall \uvec{\tau}_T,\uvec{\eta}_T\in\uvec{\Theta}_T^k,
\]
where the difference operators are such that, for all $\uvec{\eta}_T\in\uvec{\Theta}_T^k$ and $E\in\ET$,
\begin{equation}\label{eq:difference.operators}
  \bvec{\delta}^k_T\uvec{\eta}_T\coloneq \PT\big[\Irot{T}(\pTHHO\uvec{\eta}_T-\PT\uvec{\eta}_T)\big]\,,\quad
  \bvec{\delta}^k_{TE}\uvec{\eta}_T\coloneq \vlproj{k}{E}(\pTHHO\uvec{\eta}_T-\bvec{\eta}_E).
\end{equation}
Observing that $\PT\Irot{T}:H^1(T)^2\to\vPoly{k}(T)$ is a projector (see \cite[Eq.~(3.21)]{Di-Pietro.Droniou:21}) and using \eqref{eq:pTHHO.strain}, it can be checked that $\bvec{\delta}^k_T(\Irot{T}\bvec{\eta}) = \bvec{0}$ and $\bvec{\delta}^k_{TE}(\Irot{T}\bvec{\eta}) = \bvec{0}$ for all $E\in\ET$ and all $\bvec{\eta}\in\vPoly{k+1}(T)$. As a consequence, we have the following polynomial consistency property for $\mathrm{s}_T$:
\begin{equation}\label{eq:sT.polynomial}
  \mathrm{s}_T(\Irot{T}\bvec{\eta},\uvec{\xi}_T)=0\qquad
  \forall (\bvec{\eta},\uvec{\xi}_T)\in \vPoly{k+1}(T)\times\uvec{\Theta}_T^k.
\end{equation}

\begin{remark}[Original HHO stabilisation]
  In the original HHO stabilisation, the $L^2$-projector $\vlproj{k}{T}$ is used instead of $\PT\Irot{T}$ in the expression of $\bvec{\delta}^k_T$; see \eqref{eq:difference.operators}.
  The reason for using $\PT\Irot{T}$ here lies in the need to satisfy, for the interpolator $\Irot{T}$ on $\uvec{\Theta}_T^k$, the polynomial consistency \eqref{eq:sT.polynomial}.
  Note also that, in $\mathrm{s}_T$, the scaling factor $h_T^{-1}$ has been preferred over the original HHO scaling factor $h_E^{-1}$, as it is proved in \cite{Droniou.Yemm:21} to lead to a more robust discretisation in presence of small edges.
\end{remark}

Using the $L^2$-boundedness of $\PT\Irot{T}$ (stemming from the two-dimensional versions of \cite[Proposition 27 and Lemma 28]{Di-Pietro.Droniou:21}), the commutation property \eqref{eq:pTHHO.strain}, and the polynomial consistency \eqref{eq:sT.polynomial}, it is easy to reproduce, with our definitions of $\GsT$, $\pTHHO$ and $\mathrm{s}_T$, the standard HHO analysis of \cite[Section 7]{Di-Pietro.Droniou:20} and to obtain corresponding boundedness and consistency results (translated through $\injHHO{T}$).

\paragraph{Global operators.}
Global symmetric gradient, divergence, and higher-order reconstruction operators are obtained setting, for all $\uvec{\eta}_h\in\uvec{\Theta}_h^k$,
\[
\text{%
  $(\Gsh\uvec{\eta}_h)_{|T}\coloneq\GsT\uvec{\eta}_T$,
  $(\Dh\uvec{\eta}_h)_{|T}\coloneq\DT\uvec{\eta}_T$,
  and $(\phHHO\uvec{\eta}_h)_{|T}\coloneq\pTHHO\uvec{\eta}_T$ for all $T\in\Th$.
}
\]
Likewise, denoting by $\uvec{\Theta}_{{\rm HHO},h}^k$ the global HHO space obtained patching together the local spaces \eqref{eq:def.Theta.HHO} by enforcing the single-valuedness of the edge components, we define the global embedding $\injHHO{h}:\uvec{\Theta}_h^k\to\uvec{\Theta}_{{\rm HHO},h}^k$ by setting, for all $\uvec{\eta}_h \in\uvec{\Theta}_h^k$, $(\injHHO{h}\uvec{\eta}_h)_{|T}\coloneq\injHHO{T}\uvec{\eta}_T$ for all $T\in\Th$.
We also let $\mathrm{s}_h:\uvec{\Theta}_h^k\times \uvec{\Theta}_h^k\to\Real$ be the global stabilisation bilinear form such that
\[
\mathrm{s}_h(\uvec{\tau}_h,\uvec{\eta}_h)
\coloneq\sum_{T\in\Th}\mathrm{s}_T(\uvec{\tau}_T,\uvec{\eta}_T)\qquad
\forall(\uvec{\tau}_h,\uvec{\eta}_h)\in\uvec{\Theta}_h^k\times\uvec{\Theta}_h^k.
\]

\subsection{Discrete forms}

Based on the reconstructions introduced in the previous section, we define the discrete counterparts of the forms that appear in the weak formulation \eqref{eq:weak}.
Specifically, we let the bilinear form $\mathrm{A}_h:\left[\uvec{\Theta}_h^k\times\underline{U}_h^k\right]^2\to\Real$ and the linear form $\ell_h:\underline{U}_h^k\to\Real$ be such that, for all $(\uvec{\tau}_h,\underline{w}_h),(\uvec{\eta}_h,\underline{v}_h)\in\uvec{\Theta}_h^k\times\underline{U}_h^k$,
\begin{equation}\label{eq:Ah}
  \mathrm{A}_h((\uvec{\tau}_h,\underline{w}_h),(\uvec{\eta}_h,\underline{v}_h))
  \coloneq
  \mathrm{a}_h(\uvec{\tau}_h,\uvec{\eta}_h)
  + \mathrm{b}_h((\uvec{\tau}_h,\underline{w}_h),(\uvec{\eta}_h,\underline{v}_h)),\qquad
  \ell_h(\underline{v}_h)\coloneq\int_\Omega f~\PUh\underline{v}_h,
\end{equation}
where the bilinear forms $\mathrm{a}_h:\uvec{\Theta}_h^k\times\uvec{\Theta}_h^k\to\Real$ and
$\mathrm{b}_h:\left[\uvec{\Theta}_h^k\times\underline{U}_h^k\right]^2\to\Real$ are such that
\begin{equation}\label{eq:ah.bh}
  \begin{gathered}
    \mathrm{a}_h(\uvec{\tau}_h,\uvec{\eta}_h)
    \coloneq
    \beta_0\left(
    \int_\Omega\Gsh\uvec{\tau}_h:\Gsh\uvec{\eta}_h
    + \mathrm{s}_h(\uvec{\tau}_h,\uvec{\eta}_h)
    + \mathrm{j}_h(\uvec{\tau}_h,\uvec{\eta}_h)
    \right)
    + \beta_1\int_\Omega\Dh\uvec{\tau}_h~\Dh\uvec{\eta}_h,
    \\
    \mathrm{b}_h((\uvec{\tau}_h,\underline{w}_h),(\uvec{\eta}_h,\underline{v}_h))
    \coloneq
    \frac{\kappa}{t^2}(\uvec{\tau}_h - \uGh\underline{w}_h, \uvec{\eta}_h - \uGh\underline{v}_h)_{\bvec{\Theta},h}.
  \end{gathered}
\end{equation}
Here, $\mathrm{j}_h$ is an additional stabilisation term appearing only in the case $k=0$ and which penalises the jumps of higher-order reconstructions between elements:
\[
\mathrm{j}_h(\uvec{\tau}_h,\uvec{\eta}_h)
\coloneq\begin{cases}
0 &  \mbox{ if $k\ge 1$},\\
\displaystyle\sum_{E\in\Eh}h_E^{-1}\int_E [\phHHO[1]\uvec{\tau}_h]_E[\phHHO[1]\uvec{\eta}_h]_E & \mbox{ if $k=0$},
\end{cases}
\]
where, for any internal edge $E\in\Ehi$, if $T_1,T_2$ are the two elements (in an arbitrary but fixed order) on each side of $E$, we set $[\phHHO[1]\uvec{\tau}_h]_E\coloneq(\pToneHHO[1]\uvec{\tau}_{T_1})_{|E}-(\pTtwoHHO[1]\uvec{\tau}_{T_2})_{|E}$ while, for any boundary edge $E\in\Ehb\cap\ET$ for $T\in\Th$, $[\phHHO[1]\uvec{\tau}_h]_E\coloneq (\pTHHO[1]\uvec{\tau}_T)_{|E}$. We also introduced in \eqref{eq:ah.bh} the DDR $L^2$-product $(\cdot,\cdot)_{\bvec{\Theta},h}$ on $\uvec{\Theta}_h^k$ assembled from the following local contributions:
For all $\uvec{\tau}_T,\uvec{\eta}_T\in\uvec{\Theta}_T^k$,
\begin{equation}\label{eq:l2.prod.Theta.T}
  \begin{gathered}
    (\uvec{\tau}_T,\uvec{\eta}_T)_{\bvec{\Theta},T}
    \coloneq
    \int_T\PT\uvec{\tau}_T\cdot\PT\uvec{\eta}_T + \mathrm{S}_{\bvec{\Theta},T}(\uvec{\tau}_T,\uvec{\eta}_T)\\
    \text{
      with $\mathrm{S}_{\bvec{\Theta},T}(\uvec{\tau}_T,\uvec{\eta}_T)\coloneq\sum_{E\in\ET} h_E\int_E(\PT\uvec{\tau}_T-\bvec{\tau}_E)\cdot\tangent_E~(\PT\uvec{\eta}_T-\bvec{\eta}_E)\cdot\tangent_E$.
    }
  \end{gathered}
\end{equation}

\begin{remark}[Normal components of edge polynomials]\label{rem:normal.comp}
  A simple inspection of \eqref{eq:def.RT}, \eqref{eq:def.PT}, and \eqref{eq:l2.prod.Theta.T} shows that the normal components of edge unknowns do not enter the definition of $(\cdot,\cdot)_{\bvec{\Theta},h}$.
\end{remark}

\subsection{Discrete problem}

Define the following subspaces of $\uvec{\Theta}_h^k$ and $\underline{U}_h^k$ incorporating the clamped boundary condition:
\[
  \uvec{\Theta}_{h,0}^k
  \coloneq\Big\{
  \uvec{\eta}_h\in\uvec{\Theta}_h^k\st
  \text{%
    $\bvec{\eta}_E = \bvec{0}$ for all $E\in\Ehb$
  }
  \Big\},\qquad
  \underline{U}_{h,0}^k
  \coloneq\Big\{
  \underline{v}_h\in\underline{U}_h^k\st
  \text{$(v_{\Eh})_{|\partial\Omega} = 0$}
  \Big\}.
\]
The discrete problem reads:
Find $(\uvec{\theta}_h,\underline{u}_h)\in\uvec{\Theta}_{h,0}^k\times\underline{U}_{h,0}^k$ such that
\begin{equation}\label{eq:discrete}
  \mathrm{A}_h((\uvec{\theta}_h,\underline{u}_h),(\uvec{\eta}_h,\underline{v}_h))
  = \ell_h(\underline{v}_h)\qquad
  \forall(\uvec{\eta}_h,\underline{v}_h)\in\uvec{\Theta}_{h,0}^k\times\underline{U}_{h,0}^k.
\end{equation}


\section{Analysis}\label{sec:analysis}

Let
\begin{equation}\label{eq:mu}
  \mu\coloneq\min(\kappa,\beta_0).
\end{equation}
Throughout the rest of the paper, we use $a\lesssim b$ as a shorthand notation for the inequality $a\le Cb$ with multiplicative constant $C$ that possibly depends on $\Omega$, the mesh regularity, and on the polynomial degree, but not on $\beta_0$, $\beta_1$, $\kappa$, $\mu$, $t$, or $h$ and, for local inequalities, on the mesh element or edge.

\subsection{Discrete norm and stability}

We define the discrete seminorm on $\uvec{\Theta}_h^k\times\underline{U}_h^k$ such that, for all $(\uvec{\eta}_h,\underline{v}_h)\in\uvec{\Theta}_h^k\times\underline{U}_h^k$,
\begin{multline}\label{eq:norm.en.h}
  \norm[\bvec{\Theta}\times U,h]{(\uvec{\eta}_h,\underline{v}_h)}
  \coloneq\bigg[
    \beta_0\left(
    \norm[L^2(\Omega)^{2\times 2}]{\Gsh\uvec{\eta}_h}^2
    + \seminorm[{\rm s,j},h]{\uvec{\eta}_h}^2
    \right)
    + \beta_1\norm[L^2(\Omega)]{\Dh\uvec{\eta}_h}^2
    \\
    + \frac{\kappa}{t^2}\norm[\bvec{\Theta},h]{\uvec{\eta}_h-\uGh\underline{v}_h}^2
    + \mu\left(
    \norm[\bvec{\Theta},h]{\uvec{\eta}_h}^2
    + \norm[\bvec{\Theta},h]{\uGh\underline{v}_h}^2
    \right)
    \bigg]^{\frac12},
\end{multline}
where $\norm[\bvec{\Theta},h]{{\cdot}}$ and $\seminorm[{\rm s,j},h]{{\cdot}}$ denote the seminorms respectively induced by $(\cdot,\cdot)_{\bvec{\Theta},h}$ and $\mathrm{s}_h+\mathrm{j}_h$ on $\uvec{\Theta}_h^k$.
Using, respectively, the discrete Korn and Korn--Poincar\'e inequalities \cite[Lemma 7.24 and Eq.~(7.73)]{Di-Pietro.Droniou:20} (see also \cite[Lemma 7.42]{Di-Pietro.Droniou:20} in the case $k=0$) and the fact that $h_E\lesssim 1$ for the terms composing the norm in the left-hand side (see \eqref{eq:l2.prod.Theta.T} for the corresponding local contribution), we readily obtain
  \begin{equation}\label{eq:korn}
    \norm[\bvec{\Theta},h]{\uvec{\eta}_h}\lesssim\left(
    \norm[L^2(\Omega)^{2\times 2}]{\Gsh\uvec{\eta}_h}^2
    + \seminorm[{\rm s,j},h]{\uvec{\eta}_h}^2
    \right)^{\frac12}\qquad \forall\uvec{\eta}_h\in\uvec{\Theta}_{h,0}^k.
  \end{equation}
  Together with the Poincar\'e inequality for $\uGh$ in $\underline{U}_{h,0}^k$, whose proof can be obtained using arguments similar to \cite[Theorem 31]{Di-Pietro.Droniou:21} (leveraging the Poincar\'e inequality with zero boundary condition stated in \cite[Lemma 2.15]{Di-Pietro.Droniou:20}), \eqref{eq:korn} proves that the energy seminorm $\norm[\bvec{\Theta}\times U,h]{{\cdot}}$ is actually a norm on $\uvec{\Theta}_{h,0}^k\times\underline{U}_{h,0}^k$.
We can now establish the coercivity of $\mathrm{A}_h$ with respect to this norm.

\begin{lemma}[Coercivity]
  For all $(\uvec{\eta}_h,\underline{v}_h)\in\uvec{\Theta}_{h,0}^k\times\underline{U}_{h,0}^k$, it holds
  \begin{equation}\label{eq:coercivity}
    \norm[\bvec{\Theta}\times U,h]{(\uvec{\eta}_h,\underline{v}_h)}^2
    \lesssim\mathrm{A}_h((\uvec{\eta}_h,\underline{v}_h),(\uvec{\eta}_h,\underline{v}_h)).
  \end{equation}
\end{lemma}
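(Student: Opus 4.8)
The plan is to exploit the fact that the diagonal of $\mathrm{A}_h$ already reproduces almost all of the energy norm. Expanding \eqref{eq:Ah}--\eqref{eq:ah.bh} and \eqref{eq:l2.prod.Theta.T} along the diagonal, one gets
\[
\mathrm{A}_h((\uvec{\eta}_h,\underline{v}_h),(\uvec{\eta}_h,\underline{v}_h))
= \beta_0\big(\norm[L^2(\Omega)^{2\times 2}]{\Gsh\uvec{\eta}_h}^2 + \seminorm[{\rm s,j},h]{\uvec{\eta}_h}^2\big)
+ \beta_1\norm[L^2(\Omega)]{\Dh\uvec{\eta}_h}^2
+ \frac{\kappa}{t^2}\norm[\bvec{\Theta},h]{\uvec{\eta}_h-\uGh\underline{v}_h}^2,
\]
which, comparing with \eqref{eq:norm.en.h}, is exactly $\norm[\bvec{\Theta}\times U,h]{(\uvec{\eta}_h,\underline{v}_h)}^2$ minus the term $\mu\big(\norm[\bvec{\Theta},h]{\uvec{\eta}_h}^2 + \norm[\bvec{\Theta},h]{\uGh\underline{v}_h}^2\big)$. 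Hence it suffices to bound this last quantity by $\mathrm{A}_h((\uvec{\eta}_h,\underline{v}_h),(\uvec{\eta}_h,\underline{v}_h))$ (up to a multiplicative constant), since then \eqref{eq:coercivity} follows by adding $\mathrm{A}_h((\uvec{\eta}_h,\underline{v}_h),(\uvec{\eta}_h,\underline{v}_h))$ to both sides. Note that every term appearing on the right-hand side of the displayed identity is nonnegative, which makes this absorption argument possible.

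For the first piece, I would invoke the discrete Korn inequality \eqref{eq:korn} (applicable since $\uvec{\eta}_h\in\uvec{\Theta}_{h,0}^k$), giving $\norm[\bvec{\Theta},h]{\uvec{\eta}_h}^2 \lesssim \norm[L^2(\Omega)^{2\times 2}]{\Gsh\uvec{\eta}_h}^2 + \seminorm[{\rm s,j},h]{\uvec{\eta}_h}^2$; multiplying by $\mu$ and using $\mu\le\beta_0$ from \eqref{eq:mu} bounds $\mu\norm[\bvec{\Theta},h]{\uvec{\eta}_h}^2$ by the first group of terms in $\mathrm{A}_h$. For the second piece I would split $\uGh\underline{v}_h = (\uGh\underline{v}_h - \uvec{\eta}_h) + \uvec{\eta}_h$ and use the triangle inequality for the seminorm $\norm[\bvec{\Theta},h]{{\cdot}}$, so that $\norm[\bvec{\Theta},h]{\uGh\underline{v}_h}^2 \lesssim \norm[\bvec{\Theta},h]{\uvec{\eta}_h-\uGh\underline{v}_h}^2 + \norm[\bvec{\Theta},h]{\uvec{\eta}_h}^2$. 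The second summand is controlled as above; for the first, since $t<1$ one has $\mu\le\kappa\le\frac{\kappa}{t^2}$, so $\mu\norm[\bvec{\Theta},h]{\uvec{\eta}_h-\uGh\underline{v}_h}^2$ is bounded by the Kirchhoff term of $\mathrm{A}_h$.

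Collecting these estimates gives $\mu\big(\norm[\bvec{\Theta},h]{\uvec{\eta}_h}^2 + \norm[\bvec{\Theta},h]{\uGh\underline{v}_h}^2\big)\lesssim\mathrm{A}_h((\uvec{\eta}_h,\underline{v}_h),(\uvec{\eta}_h,\underline{v}_h))$, hence \eqref{eq:coercivity}. All hidden constants depend only on $\Omega$, the mesh regularity, and $k$ — crucially, the Korn constant in \eqref{eq:korn} is independent of $\beta_0,\beta_1,\kappa,t,h$ — so the final constant has the required independence. There is no genuine analytical obstacle here: the discrete Korn inequality is quoted from \cite{Di-Pietro.Droniou:20}, and the only point needing a little care is the bookkeeping around $\mu=\min(\kappa,\beta_0)$ together with $t<1$, which is what renders the two "extra" seminorm contributions of \eqref{eq:norm.en.h} absorbable into the diagonal of $\mathrm{A}_h$.
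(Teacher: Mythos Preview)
Your proof is correct and follows essentially the same approach as the paper: both identify that $\mathrm{A}_h((\uvec{\eta}_h,\underline{v}_h),(\uvec{\eta}_h,\underline{v}_h))$ already equals all but the last term of $\norm[\bvec{\Theta}\times U,h]{(\uvec{\eta}_h,\underline{v}_h)}^2$, and then absorb $\mu\big(\norm[\bvec{\Theta},h]{\uvec{\eta}_h}^2 + \norm[\bvec{\Theta},h]{\uGh\underline{v}_h}^2\big)$ using the discrete Korn inequality \eqref{eq:korn}, a triangle inequality on $\uGh\underline{v}_h$, and the relations $\mu\le\beta_0$, $\mu\le\kappa\le\kappa/t^2$. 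The only cosmetic difference is that the paper bounds $\norm[\bvec{\Theta},h]{\uvec{\eta}_h}^2+\norm[\bvec{\Theta},h]{\uGh\underline{v}_h}^2$ in one shot before multiplying by $\mu$, whereas you treat the two pieces separately; the ingredients and constants are identical.
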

\begin{proof}
  By the definitions \eqref{eq:Ah} of $\mathrm{A}_h$ and \eqref{eq:ah.bh} of $\mathrm{a}_h$ and $\mathrm{b}_h$, we have
  \begin{equation}\label{eq:coercivity:1}
    \beta_0\left(
    \norm[L^2(\Omega)^{2\times 2}]{\Gsh\uvec{\eta}_h}^2
    {+} \seminorm[{\rm s,j},h]{\uvec{\eta}_h}^2
    \right)
    + \beta_1\norm[L^2(\Omega)]{\Dh\uvec{\eta}_h}^2
    {+} \frac{\kappa}{t^2}\norm[\bvec{\Theta},h]{\uvec{\eta}_h-\uGh\underline{v}_h}^2
    = \mathrm{A}_h((\uvec{\eta}_h,\underline{v}_h),(\uvec{\eta}_h,\underline{v}_h)).
  \end{equation}
  We next write, using a triangle inequality,
  \begin{equation}\label{eq:coercivity:2}
    \begin{aligned}
      \norm[\bvec{\Theta},h]{\uGh\underline{v}_h}^2+\norm[\bvec{\Theta},h]{\uvec{\eta}_h}^2
      &\le2\norm[\bvec{\Theta},h]{\uvec{\eta}_h - \uGh\underline{v}_h}^2
      + 3\norm[\bvec{\Theta},h]{\uvec{\eta}_h}^2
      \\
      &\le 2\kappa^{-1}\frac{\kappa}{t^2}\norm[\bvec{\Theta},h]{\uvec{\eta}_h - \uGh\underline{v}_h}^2
      + 3\beta_0^{-1}\beta_0\left(\norm[L^2(\Omega)^{2\times 2}]{\Gsh\uvec{\eta}_h}^2+\seminorm[{\rm s,j},h]{\uvec{\eta}_h}^2\right)
      \\
      &\lesssim\mu^{-1}\mathrm{A}_h((\uvec{\eta}_h,\underline{v}_h),(\uvec{\eta}_h,\underline{v}_h)),
    \end{aligned}
  \end{equation}
  where we have used the fact that $t<1$ along with the discrete Korn inequality \eqref{eq:korn} to pass to the second line
  and \eqref{eq:coercivity:1} together with the definition \eqref{eq:mu} of $\mu$ to conclude.
  The proof is completed by combining \eqref{eq:coercivity:1} and \eqref{eq:coercivity:2} with the definition of $\norm[\bvec{\Theta}\times U,h]{{\cdot}}$.
\end{proof}

\subsection{Error estimates}\label{sec:results}

The regularity assumptions in the error estimates are expressed in terms of the broken Sobolev spaces
\[
H^s(\Th)\coloneq\left\{
v\in L^2(\Omega)\st\text{$v_{|T}\in H^s(T)$ for all $T\in\Th$}
\right\}.
\]
The first error estimate is for an arbitrary polynomial degree $k$.

\begin{theorem}[Error estimate for arbitrary $k$]\label{th:error.est}
  Denote by $(\bvec{\eta},u)\in\bvec{\Theta}\times U$ and $(\uvec{\eta}_h,\underline{u}_h)\in\uvec{\Theta}_{h,0}^k\times\underline{U}_{h,0}^k$ the solutions to problems \eqref{eq:weak} and \eqref{eq:discrete}, respectively.
  We assume the additional regularity
  $u\in C^1(\overline{\Omega})\cap H^{k+2}(\Th)$ for the displacement and
  $\bvec{\theta}\in H^1(\Omega)^2\cap H^{k+2}(\Th)^2$ for the rotation. Then, it holds
  \begin{equation}\label{eq:err.est}  
    \norm[\bvec{\Theta}\times U,h]{(\uvec{\theta}_h - \Irot{h}\bvec{\theta}, \underline{u}_h - \Idis{h}u)}
    \lesssim h^{k+1}\left(
      \beta_0^{-\frac12}(\beta_0 + \beta_1)\seminorm[H^{k+2}(\Th)^2]{\bvec{\theta}}
      + 
      \mu^{-\frac12}\seminorm[H^{k+1}(\Th)^2]{\bvec{\gamma}}
      \right).
  \end{equation}
\end{theorem}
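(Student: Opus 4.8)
The plan is to run a Third Strang Lemma argument \cite{Di-Pietro.Droniou:18}. Set $\uvec{e}_h\coloneq\uvec{\theta}_h-\Irot{h}\bvec{\theta}$ and $\underline{E}_h\coloneq\underline{u}_h-\Idis{h}u$. Since $\bvec{\theta}$ and $u$ vanish on $\partial\Omega$, the interpolants satisfy the discrete clamped conditions, so $(\uvec{e}_h,\underline{E}_h)\in\uvec{\Theta}_{h,0}^k\times\underline{U}_{h,0}^k$; using it as test function in \eqref{eq:discrete} together with the coercivity \eqref{eq:coercivity} (and the symmetry of $\mathrm{A}_h$) gives
\[
\norm[\bvec{\Theta}\times U,h]{(\uvec{e}_h,\underline{E}_h)}^2
\lesssim\mathrm{A}_h\big((\uvec{e}_h,\underline{E}_h),(\uvec{e}_h,\underline{E}_h)\big)
=\ell_h(\underline{E}_h)-\mathrm{A}_h\big((\Irot{h}\bvec{\theta},\Idis{h}u),(\uvec{e}_h,\underline{E}_h)\big).
\]
Hence it suffices to bound the consistency error $\mathcal{E}_h(\uvec{\eta}_h,\underline{v}_h)\coloneq\ell_h(\underline{v}_h)-\mathrm{A}_h\big((\Irot{h}\bvec{\theta},\Idis{h}u),(\uvec{\eta}_h,\underline{v}_h)\big)$ by the right-hand side of \eqref{eq:err.est} times $\norm[\bvec{\Theta}\times U,h]{(\uvec{\eta}_h,\underline{v}_h)}$, for all $(\uvec{\eta}_h,\underline{v}_h)\in\uvec{\Theta}_{h,0}^k\times\underline{U}_{h,0}^k$. (Note that $u\in C^1(\overline{\Omega})\cap H^{k+2}(\Th)$ and $\bvec{\theta}\in H^1(\Omega)^2$ force $\bvec{\gamma}=\tfrac{\kappa}{t^2}(\GRAD u-\bvec{\theta})\in H^1(\Omega)^2$, so that $\tIrot{h}\bvec{\gamma}$ below is well defined.)

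The crux is the cancellation of the penalty factor $\kappa t^{-2}$, which the DDR construction is designed to produce. By Remark \ref{rem:normal.comp} the normal edge components do not enter $(\cdot,\cdot)_{\bvec{\Theta},h}$, so inside $\mathrm{b}_h$ we may replace $\Irot{h}\bvec{\theta}$ by its modified interpolant $\tIrot{h}\bvec{\theta}$; combining the linearity of $\tIrot{h}$, the commutation property \eqref{eq:tIrot:P2} ($\uGh(\Idis{h}u)=\tIrot{h}(\GRAD u)$) and the constitutive law \eqref{eq:rm.def.gamma} ($\bvec{\theta}-\GRAD u=-\tfrac{t^2}{\kappa}\bvec{\gamma}$) then gives $\mathrm{b}_h\big((\Irot{h}\bvec{\theta},\Idis{h}u),(\uvec{\eta}_h,\underline{v}_h)\big)=-(\tIrot{h}\bvec{\gamma},\uvec{\eta}_h-\uGh\underline{v}_h)_{\bvec{\Theta},h}$, with no $t^{-2}$ left. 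Therefore $\mathcal{E}_h=\mathfrak{T}_1+\mathfrak{T}_2$ with
\[
\mathfrak{T}_1\coloneq\int_\Omega f\,\PUh\underline{v}_h-(\tIrot{h}\bvec{\gamma},\uGh\underline{v}_h)_{\bvec{\Theta},h},
\qquad
\mathfrak{T}_2\coloneq(\tIrot{h}\bvec{\gamma},\uvec{\eta}_h)_{\bvec{\Theta},h}-\mathrm{a}_h(\Irot{h}\bvec{\theta},\uvec{\eta}_h).
\]
By the strong equations, the continuous analogues of $\mathfrak{T}_1,\mathfrak{T}_2$ vanish: $\int_\Omega fv=\int_\Omega\bvec{\gamma}\cdot\GRAD v$ (from \eqref{eq:rm.balance2}) and $a(\bvec{\theta},\bvec{\eta})=\int_\Omega\bvec{\gamma}\cdot\bvec{\eta}$ (from \eqref{eq:rm.balance1}, since $\vDIV(\tens{C}\GRADs\bvec{\theta})=-\bvec{\gamma}$), so both are genuine consistency errors.

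I would bound $\mathfrak{T}_2$ and $\mathfrak{T}_1$ separately, in each case inserting the common intermediate quantity $\sum_{T\in\Th}\int_T\bvec{\gamma}\cdot\PT(\cdot)$. For $\mathfrak{T}_2$: the standard HHO consistency for elasticity — available here through $\injHHO{h}$, the commutation $\tGT(\Irot{T}\bvec{\eta})=\vlproj{k}{T}(\GRAD\bvec{\eta})$, the polynomial consistency \eqref{eq:sT.polynomial} of $\mathrm{s}_T$, and (for $k=0$) the jump penalty $\mathrm{j}_h$ and the modified strain projector \eqref{eq:pTHHO.strain} — together with $\vDIV(\tens{C}\GRADs\bvec{\theta})=-\bvec{\gamma}$ and the cancellation of inter-element normal fluxes (by $H(\vDIV)$-regularity of $\tens{C}\GRADs\bvec{\theta}$ and single-valuedness of the edge unknowns) and of boundary fluxes (since $\uvec{\eta}_h\in\uvec{\Theta}_{h,0}^k$), yields $\mathrm{a}_h(\Irot{h}\bvec{\theta},\uvec{\eta}_h)=\sum_{T\in\Th}\int_T\bvec{\gamma}\cdot\PT\uvec{\eta}_T+O\!\big(h^{k+1}(\beta_0+\beta_1)\seminorm[H^{k+2}(\Th)^2]{\bvec{\theta}}\,(\norm[L^2(\Omega)^{2\times 2}]{\Gsh\uvec{\eta}_h}^2+\seminorm[{\rm s,j},h]{\uvec{\eta}_h}^2)^{1/2}\big)$; the DDR $L^2$-product consistency (using \eqref{eq:projections.PT} and Remark \ref{rem:normal.comp} to revert to $\Irot{h}\bvec{\gamma}$) yields $(\tIrot{h}\bvec{\gamma},\uvec{\eta}_h)_{\bvec{\Theta},h}=\sum_{T\in\Th}\int_T\bvec{\gamma}\cdot\PT\uvec{\eta}_T+O\!\big(h^{k+1}\seminorm[H^{k+1}(\Th)^2]{\bvec{\gamma}}\,\norm[\bvec{\Theta},h]{\uvec{\eta}_h}\big)$; subtracting, the volumic terms cancel and, using $(\norm[L^2(\Omega)^{2\times 2}]{\Gsh\uvec{\eta}_h}^2+\seminorm[{\rm s,j},h]{\uvec{\eta}_h}^2)^{1/2}\le\beta_0^{-1/2}\norm[\bvec{\Theta}\times U,h]{(\uvec{\eta}_h,\underline{v}_h)}$, $\norm[\bvec{\Theta},h]{\uvec{\eta}_h}\le\mu^{-1/2}\norm[\bvec{\Theta}\times U,h]{(\uvec{\eta}_h,\underline{v}_h)}$ and $\beta_1\lesssim\beta_0$ (because $\nu<\tfrac12$), we recover exactly the two terms of \eqref{eq:err.est}. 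For $\mathfrak{T}_1$: with $f=-\DIV\bvec{\gamma}$ and the identity $\PT(\uGT\underline{v}_T)=\GT\underline{v}_T$ (consequence of \eqref{eq:projections.PT} and the definition of $\uGh$), the standard DDR consistency analysis of the scalar Poisson problem (element-wise integration by parts tying $\PUh$, $\GT$ and the edge unknowns, then $L^2$-product consistency) gives $|\mathfrak{T}_1|\lesssim h^{k+1}\seminorm[H^{k+1}(\Th)^2]{\bvec{\gamma}}\,\norm[\bvec{\Theta},h]{\uGh\underline{v}_h}\le h^{k+1}\mu^{-1/2}\seminorm[H^{k+1}(\Th)^2]{\bvec{\gamma}}\,\norm[\bvec{\Theta}\times U,h]{(\uvec{\eta}_h,\underline{v}_h)}$.

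The step I expect to be the main obstacle is making the two consistency estimates feeding $\mathfrak{T}_2$ \emph{compatible}: $\mathrm{a}_h$ pairs a discrete rotation against its element polynomial and the full \emph{vector} edge traces, whereas $(\cdot,\cdot)_{\bvec{\Theta},h}$ only sees the reconstruction $\PT\uvec{\eta}_T$ and the \emph{tangential} traces, so the matching of the common term $\sum_T\int_T\bvec{\gamma}\cdot\PT\uvec{\eta}_T$ rests on \eqref{eq:projections.PT}, Remark \ref{rem:normal.comp}, the cancellation of normal fluxes across edges (which uses the \emph{strong} equation \eqref{eq:rm.balance1}, not merely the weak form), and the clamped condition. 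A secondary difficulty is the bookkeeping of the weights $\beta_0,\beta_1,\mu,t$ so that the final constant is genuinely $t$-independent and matches the energy norm \eqref{eq:norm.en.h}, together with carrying the lowest-order stabilisations ($\mathrm{j}_h$ and the modified strain projector) through the case $k=0$.
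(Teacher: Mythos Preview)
Your proposal is correct and follows essentially the same route as the paper: Third Strang Lemma, cancellation of $\kappa t^{-2}$ via \eqref{eq:tIrot:P2} and Remark~\ref{rem:normal.comp}, insertion of the intermediate $\sum_{T}\int_T\bvec{\gamma}\cdot\PT(\cdot)$, and then the HHO symmetric-gradient consistency together with the DDR gradient and $L^2$-product consistencies. The only cosmetic difference is that the paper groups the residual terms into a single $\term$ bounded by $\norm[\bvec{\Theta},h]{\uGh\underline{v}_h-\uvec{\eta}_h}$ (yielding a $\kappa^{-1/2}t$ factor, later absorbed into $\mu^{-1/2}$) whereas you split them between $\mathfrak{T}_1$ and $\mathfrak{T}_2$; also, your appeal to $\beta_1\lesssim\beta_0$ is unnecessary since the target estimate \eqref{eq:err.est} already carries the factor $\beta_0^{-1/2}(\beta_0+\beta_1)$.
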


\begin{remark}[Regularity of the shear strain {$\bvec{\gamma}$}]
  Under the regularity assumptions on $u$ and $\bvec{\theta}$ in the theorem, the shear strain defined by \eqref{eq:rm.def.gamma} satisfies $\bvec{\gamma}\in H^1(\Omega)^2\cap H^{k+1}(\Th)^2$.
\end{remark}

\begin{proof}
See Section \ref{sec:proof.th.error.est}.
\end{proof}
The bound \eqref{eq:err.est} shows that the DDR scheme achieves as expected a high-order of accuracy, when the solution is smooth enough and $t$ is not too small. When $t\to 0$ the higher derivatives of the shear strain $\bvec{\gamma}$ are known to explode; typically, $\seminorm[H^{k+1}(\Th)^2]{\bvec{\gamma}}$ grows as $t^{-k-1}$, as explained in \cite[Theorem 2.1 and following remarks]{Arnold.Falk:89}.
Thus, even though $t$ does not explicitly appear in the right-hand side of \eqref{eq:err.est}, the dependency of this right-hand side on higher derivatives of the solution means that this estimate is not locking-free. Such a dependency is unavoidable for high-order schemes (see, e.g., \cite{Arnold.Brezzi.ea:05} in the case of continuous/discontinuous Galerkin schemes).
However, for $k=0$, one could expect a better estimate than \eqref{eq:err.est} in which $\seminorm[H^1(\Th)]{\bvec{\gamma}}$ is multiplied by $t$ as in \cite{Arnold.Brezzi.ea:05,Chinosi.Lovadina.ea:06,Lovadina:05,Beirao-da-Veiga.Mora.ea:19}; this ensures that the method is locking-free at least if $\Omega$ is convex since, on such domains, $t\seminorm[H^1(\Omega)]{\bvec{\gamma}}$ remains bounded as $t\to 0$. Such an error estimate is stated in the next theorem. Note that, contrary to most analyses in the aforementioned references and others (a notable exception being \cite{Beirao-da-Veiga.Mora.ea:19}), the proof of the following estimate does not use a Helmoltz decomposition of the shear strain.

\begin{theorem}[Locking-free error estimate for $k=0$]\label{th:error.est.k0}
Assume the hypotheses of Theorem \ref{th:error.est}, and that $k=0$. Then, it holds
  \begin{multline}\label{eq:err.est.k0}  
    \norm[\bvec{\Theta}\times U,h]{(\uvec{\theta}_h - \Irotz{h}\bvec{\theta}, \underline{u}_h - \Idisz{h}u)}
    \\\lesssim 
    h\left(\beta_0^{-\frac12}(\beta_0 + \beta_1)\seminorm[H^{2}(\Th)^2]{\bvec{\theta}}
    +\kappa^{-\frac12}t\seminorm[H^1(\Th)^2]{\bvec{\gamma}}+\beta_0^{-\frac12}\norm[L^2(\Omega)^2]{\bvec{\gamma}}
    +\mu^{-\frac12}\norm[L^2(\Omega)]{f}
    \right).
  \end{multline}
\end{theorem}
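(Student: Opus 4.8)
The plan is to keep the consistency--stability (Third Strang) structure used for Theorem~\ref{th:error.est}, and to refine only the step in which the regularity of $\bvec{\gamma}$ is consumed, so as not to waste the factor~$t$. Writing $(\uvec{e}_h,\underline{\epsilon}_h)\coloneq(\uvec{\theta}_h-\Irotz{h}\bvec{\theta},\underline{u}_h-\Idisz{h}u)$, the coercivity \eqref{eq:coercivity} together with \eqref{eq:discrete} gives $\norm[\bvec{\Theta}\times U,h]{(\uvec{e}_h,\underline{\epsilon}_h)}^2\lesssim\mathcal{E}_h(\uvec{e}_h,\underline{\epsilon}_h)$, where $\mathcal{E}_h(\uvec{\eta}_h,\underline{v}_h)\coloneq\ell_h(\underline{v}_h)-\mathrm{A}_h\big((\Irotz{h}\bvec{\theta},\Idisz{h}u),(\uvec{\eta}_h,\underline{v}_h)\big)$, so it suffices to bound $\mathcal{E}_h(\uvec{\eta}_h,\underline{v}_h)$ by $h$ times the parenthesised quantity in \eqref{eq:err.est.k0} times $\norm[\bvec{\Theta}\times U,h]{(\uvec{\eta}_h,\underline{v}_h)}$. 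The first manipulation is algebraic: since $u\in C^1(\overline{\Omega})$, \eqref{eq:tIrot:P2} gives $\uGh(\Idisz{h}u)=\tIrotz{h}(\GRAD u)$; combining this with Remark~\ref{rem:normal.comp} (the edge normal components are invisible to $(\cdot,\cdot)_{\bvec{\Theta},h}$) and with $\frac{\kappa}{t^2}(\bvec{\theta}-\GRAD u)=-\bvec{\gamma}$, the Kirchhoff part of $\mathrm{A}_h$ collapses \emph{exactly} to $-(\tIrotz{h}\bvec{\gamma},\uvec{\eta}_h-\uGh\underline{v}_h)_{\bvec{\Theta},h}$, whence
\[
\mathcal{E}_h(\uvec{\eta}_h,\underline{v}_h)=\int_\Omega f\,\PUh\underline{v}_h+(\tIrotz{h}\bvec{\gamma},\uvec{\eta}_h-\uGh\underline{v}_h)_{\bvec{\Theta},h}-\mathrm{a}_h(\Irotz{h}\bvec{\theta},\uvec{\eta}_h).
\]

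The bilinear form $\mathrm{a}_h$ is handled as in Theorem~\ref{th:error.est}: through the embedding $\injHHO{T}$, the commutation \eqref{eq:pTHHO.strain} and the polynomial consistency of $\mathrm{s}_T$, the standard HHO consistency analysis applies (the extra jump term $\mathrm{j}_h$ being controlled, again via \eqref{eq:pTHHO.strain} and discrete trace inequalities, by $h\,\beta_0^{1/2}\seminorm[H^2(\Th)^2]{\bvec{\theta}}$ times the energy norm); using moreover the weak form $a(\bvec{\theta},\bvec{w})=\int_\Omega\bvec{\gamma}\cdot\bvec{w}$ for $\bvec{w}\in H^1_0(\Omega)^2$ of \eqref{eq:rm.balance1}, this expresses $\mathrm{a}_h(\Irotz{h}\bvec{\theta},\uvec{\eta}_h)$ as $\int_\Omega\bvec{\gamma}\cdot\bvec{r}_h$ up to an error $\lesssim h\big(\beta_0^{-1/2}(\beta_0+\beta_1)\seminorm[H^2(\Th)^2]{\bvec{\theta}}+\beta_0^{-1/2}\norm[L^2(\Omega)^2]{\bvec{\gamma}}\big)\norm[\bvec{\Theta}\times U,h]{(\uvec{\eta}_h,\underline{v}_h)}$, where $\bvec{r}_h\in H^1_0(\Omega)^2$ is the rotation lifting of $\uvec{\eta}_h$ introduced below; the $\beta_0^{-1/2}\norm[L^2(\Omega)^2]{\bvec{\gamma}}$ contribution arises from comparing the HHO element reconstruction of $\uvec{\eta}_h$ with $\bvec{r}_h$, using that the former is the image of $\bvec{r}_h$ under the bounded, constant-reproducing projector of \eqref{eq:projections.PT} and that the lifting is $H^1$-stable. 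This accounts for the first and third terms in \eqref{eq:err.est.k0}.

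The heart of the argument, and where we genuinely depart from Theorem~\ref{th:error.est}, is the joint treatment of the two remaining terms: the factor $\seminorm[H^1(\Th)^2]{\bvec{\gamma}}$ must be produced \emph{only} in combination with $\norm[\bvec{\Theta},h]{\uvec{\eta}_h-\uGh\underline{v}_h}$, which the energy norm \eqref{eq:norm.en.h} bounds by $\kappa^{-1/2}t\,\norm[\bvec{\Theta}\times U,h]{(\uvec{\eta}_h,\underline{v}_h)}$ thanks to the $\kappa t^{-2}$ weight; splitting off $\uvec{\eta}_h$ or $\uGh\underline{v}_h$ separately (as is done for Theorem~\ref{th:error.est}) would only yield $\mu^{-1/2}\seminorm[H^1(\Th)^2]{\bvec{\gamma}}$. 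To realise the cancellation at the continuous level, where the identity $a(\bvec{\theta},\bvec{r})-\int_\Omega\bvec{\gamma}\cdot(\bvec{r}-\GRAD s)=\int_\Omega f\,s$ (obtained by testing \eqref{eq:weak} with $(\bvec{r},s)$) couples a rotation field $\bvec{r}$ and a displacement field $s$, we build \emph{liftings} $\bvec{r}_h=\mathcal{L}^{\bvec{\Theta}}_h\uvec{\eta}_h\in H^1_0(\Omega)^2$ and $s_h=\mathcal{L}^{U}_h\underline{v}_h\in C^1(\overline{\Omega})\cap H^1_0(\Omega)$, forming a commuting pair with the discrete gradient, $\GRAD s_h=\mathcal{L}^{\bvec{\Theta}}_h(\uGh\underline{v}_h)$, that are sections of the interpolators up to edge normal components ($\Irotz{h}\bvec{r}_h=\uvec{\eta}_h$ and $\Idisz{h}s_h=\underline{v}_h$, so that \eqref{eq:tIrot:P2} gives $\uGh\underline{v}_h=\tIrotz{h}(\GRAD s_h)$), with the stability bounds $\seminorm[H^1(\Omega)^2]{\bvec{r}_h}\lesssim\big(\norm[L^2(\Omega)^{2\times2}]{\Gsh\uvec{\eta}_h}^2+\seminorm[{\rm s,j},h]{\uvec{\eta}_h}^2\big)^{1/2}$, $\norm[L^2(\Omega)^2]{\bvec{r}_h-\GRAD s_h}\lesssim\norm[\bvec{\Theta},h]{\uvec{\eta}_h-\uGh\underline{v}_h}$ and $\norm[L^2(\Omega)]{\PUh\underline{v}_h-s_h}\lesssim h\,\norm[\bvec{\Theta},h]{\uGh\underline{v}_h}$. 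Given such liftings, one inserts the $(\cdot,\cdot)_{\bvec{\Theta},h}$-consistency estimate $\big|(\tIrotz{h}\bvec{\gamma},\uvec{\chi}_h)_{\bvec{\Theta},h}-\sum_{T\in\Th}\int_T\bvec{\gamma}\cdot\PT[0]\uvec{\chi}_T\big|\lesssim h\seminorm[H^1(\Th)^2]{\bvec{\gamma}}\,\norm[\bvec{\Theta},h]{\uvec{\chi}_h}$ with $\uvec{\chi}_h=\uvec{\eta}_h-\uGh\underline{v}_h$ (this is where the $\kappa^{-1/2}t\seminorm[H^1(\Th)^2]{\bvec{\gamma}}$ term is born), compares $\PT[0]\uvec{\chi}_T$ with $(\bvec{r}_h-\GRAD s_h)|_T$ through \eqref{eq:projections.PT} and an $L^2$-estimate of the resulting projector residual against $\bvec{\gamma}$ minus its element mean (absorbed into the $\kappa^{-1/2}t\seminorm[H^1(\Th)^2]{\bvec{\gamma}}$ and $\beta_0^{-1/2}\norm[L^2(\Omega)^2]{\bvec{\gamma}}$ terms), and replaces $\ell_h(\underline{v}_h)=\int_\Omega f\,\PUh\underline{v}_h$ by $\int_\Omega f\,s_h$ up to $\lesssim h\,\mu^{-1/2}\norm[L^2(\Omega)]{f}\,\norm[\bvec{\Theta}\times U,h]{(\uvec{\eta}_h,\underline{v}_h)}$. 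After these substitutions the leading part of $\mathcal{E}_h$ is $\int_\Omega f\,s_h+\int_\Omega\bvec{\gamma}\cdot(\bvec{r}_h-\GRAD s_h)-\int_\Omega\bvec{\gamma}\cdot\bvec{r}_h$, which vanishes by the continuous identity above (equivalently, by $-\DIV\bvec{\gamma}=f$ from \eqref{eq:rm.balance2} and $s_h\in H^1_0(\Omega)$); collecting the four residuals and dividing by the energy norm gives \eqref{eq:err.est.k0}.

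\textbf{Main obstacle.} The delicate point is the construction of the commuting pair of liftings with \emph{all} the properties above --- especially a displacement lifting that is simultaneously $C^1$ (so that \eqref{eq:tIrot:P2} applies to $s_h$), a section of $\Idisz{h}$, $L^2$-close to $\PUh\underline{v}_h$ at rate $h$ relative to $\norm[\bvec{\Theta},h]{\uGh\underline{v}_h}$, and whose gradient coincides with $\mathcal{L}^{\bvec{\Theta}}_h(\uGh\underline{v}_h)$ --- together with the bookkeeping that routes $\seminorm[H^1(\Th)^2]{\bvec{\gamma}}$ exclusively onto $\norm[\bvec{\Theta},h]{\uvec{\eta}_h-\uGh\underline{v}_h}$ while keeping the low-order $\norm[L^2(\Omega)^2]{\bvec{\gamma}}$ and $\norm[L^2(\Omega)]{f}$ pieces multiplied by a genuine power of $h$. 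Several of these cancellations are special to the lowest order $k=0$, where the element components of $\uvec{\Theta}_h^k$ and $\underline{U}_h^k$ vanish and the reconstructions reduce to constants, and would not survive for $k\ge1$, consistently with the fact that \eqref{eq:err.est.k0} is claimed only in that case.
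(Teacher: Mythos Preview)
Your high-level diagnosis is correct: the whole point is to route $\seminorm[H^1(\Th)^2]{\bvec{\gamma}}$ exclusively onto $\norm[\bvec{\Theta},h]{\uvec{\eta}_h-\uGh\underline{v}_h}$ so that the $\kappa t^{-2}$ weight in \eqref{eq:norm.en.h} converts it into $\kappa^{-1/2}t\seminorm[H^1(\Th)^2]{\bvec{\gamma}}$, and you correctly see that this calls for liftings of the discrete displacement and rotation. The paper follows exactly this philosophy, but the liftings it actually builds are \emph{much weaker} than the ones you postulate, and this is where your plan over-reaches.

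Concretely, the paper's displacement lifting $\widetilde{v}_T$ is only the conforming $\Poly{1}$ interpolant on a simplicial sub-triangulation of $T$ (so $\widetilde{v}_h\in H^1_0(\Omega)$, not $C^1$), and the rotation lifting $\tvec{\eta}_T$ is merely an element-wise $L^2(T)^2$ field with no global conformity at all: it is built from an $(\cdot,\cdot)_{\bvec{\Theta},T}$-orthogonal splitting $\uvec{\eta}_T^\flat=\uGT[0]\underline{w}_T+\uvec{\kappa}_T$ as $\tvec{\eta}_T=\GRAD\widetilde{w}_T+\PT[0]\uvec{\kappa}_T$. Neither lifting is a section of an interpolator; what is used instead are the \emph{projection} identities $\vlproj{0}{T}(\GRAD\widetilde{v}_T)=\GT[0]\underline{v}_T$ and $\vlproj{0}{T}\tvec{\eta}_T=\PT[0]\uvec{\eta}_T$, together with the key coupling bound $\norm[L^2(T)^2]{\GRAD\widetilde{v}_T-\tvec{\eta}_T}\lesssim\norm[\bvec{\Theta},T]{\uGT[0]\underline{v}_T-\uvec{\eta}_T}$. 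These suffice because the elasticity part $\mathrm{a}_h$ is \emph{not} touched by the liftings: the paper keeps the standard HHO consistency error $\mathcal{E}_{\GRADs,h}(\tens{C}\GRADs\bvec{\theta};\uvec{\eta}_h)$ intact, and the cancellation you seek at the continuous level happens already at the discrete level, since the term $\sum_T\int_T\bvec{\gamma}\cdot\PT[0]\uvec{\eta}_T$ is precisely what $\mathcal{E}_{\GRADs,h}$ contains (via $\bvec{\gamma}=-\vDIV(\tens{C}\GRADs\bvec{\theta})$). Only the displacement lifting needs $H^1_0$-conformity, to turn $\sum_T\int_T\bvec{\gamma}\cdot\GRAD\widetilde{v}_T$ into $\int_\Omega f\,\widetilde{v}_h$ by a single integration by parts.

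The properties you ask of your liftings---$s_h\in C^1(\overline{\Omega})$, $\bvec{r}_h\in H^1_0(\Omega)^2$ with $\Irotz{h}\bvec{r}_h=\uvec{\eta}_h$, the exact commutation $\GRAD s_h=\mathcal{L}^{\bvec{\Theta}}_h(\uGh\underline{v}_h)$, and $H^1$-stability of $\bvec{r}_h$ in terms of the discrete Korn quantity---form a package that is at best very hard to build on general polygons (the $C^1$ requirement in particular is not standard and is not needed: you invoke \eqref{eq:tIrot:P2} on $s_h$, but the argument never actually uses $\uGh\underline{v}_h=\tIrotz{h}(\GRAD s_h)$ once you already have the projection identity). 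Your detour through $a(\bvec{\theta},\bvec{r}_h)=\int_\Omega\bvec{\gamma}\cdot\bvec{r}_h$ to handle $\mathrm{a}_h$ is also unnecessary and is what forces the globally conforming rotation lifting. In short: right idea, but drop the $C^1$ and $H^1_0(\Omega)^2$ requirements, replace the section property by the $\vlproj{0}{T}$-projection identities, and leave $\mathcal{E}_{\GRADs,h}$ alone; the three residual terms $\term_a,\term_b,\term_c$ of the paper then fall out directly.
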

\begin{proof}
See Section \ref{sec:proof.th.error.est.k0}.
\end{proof}

\begin{remark}[Locking-free property]
  If $\Omega$ is convex, all terms in the right-hand side of \eqref{eq:err.est.k0} are bounded independently of $t$ \cite[Theorem 2.1]{Arnold.Falk:89}.
    The techniques used to prove \eqref{eq:err.est.k0} can be extended (at the price of some technicalities) to arbitrary values of $k$ to replace, in the right-hand side of \eqref{eq:err.est}, the term $\seminorm[H^{k+1}(\Th)^2]{\bvec{\gamma}}$ with $t\seminorm[H^{k+1}(\Th)^2]{\bvec{\gamma}} + \seminorm[H^k(\Th)^2]{\bvec{\gamma}} + \seminorm[H^k(\Th)]{f}$ in the spirit of \cite[Remark 4.3]{Arnold.Brezzi.ea:05}.
    However, since a bound independent of $t$ for the quantity $t\seminorm[H^{k+1}(\Th)^2]{\bvec{\gamma}} + \seminorm[H^k(\Th)^2]{\bvec{\gamma}} + \seminorm[H^k(\Th)]{f}$ can only be established for $k=0$, this would not yield complete robustness of the estimate \eqref{eq:err.est} for $k\ge 1$.
    For this reason, and also to make the exposition less technical, we have decided to state two separate estimates.
\end{remark}

\subsection{Proof of the arbitrary-order error estimate}\label{sec:proof.th.error.est}

\begin{proof}[Proof of Theorem \ref{th:error.est}]
  \underline{1. \emph{Basic error estimate}.}
  Combining the coercivity \eqref{eq:coercivity} of $\mathrm{A}_h$ with the Third Strang Lemma \cite[Theorem 10]{Di-Pietro.Droniou:18}, we obtain the following basic error estimate:
  \begin{equation}\label{eq:basic.err.est}
    \norm[\bvec{\Theta}\times U,h]{(\uvec{\theta}_h - \Irot{h}\bvec{\theta}, \underline{u}_h - \Idis{h}u)}
    \lesssim\sup_{(\uvec{\eta}_h,\underline{v}_h)\in\uvec{\Theta}_{h,0}^k\times\underline{U}_{h,0}^k\setminus\{(\uvec{0},\underline{0})\}}\frac{%
      \mathcal{E}_h((\bvec{\theta},u);(\uvec{\eta}_h,\underline{v}_h))
    }{\norm[\bvec{\Theta}\times U,h]{(\uvec{\eta}_h,\underline{v}_h)}},
  \end{equation}
  where the consistency error linear form $\mathcal{E}_h((\bvec{\theta},u);\cdot)$ is such that, for all $(\uvec{\eta}_h,\underline{v}_h)\in\uvec{\Theta}_{h,0}^k\times\underline{U}_{h,0}^k$,
  \begin{equation}\label{eq:consistency.error}
    \mathcal{E}_h((\bvec{\theta},u);(\uvec{\eta}_h,\underline{v}_h))
    \coloneq
    \ell_h(\underline{v}_h) - \mathrm{A}_h((\Irot{h}\bvec{\theta},\Idis{h}u),(\uvec{\eta}_h,\underline{v}_h)).
  \end{equation}
  \\
  \underline{2. \emph{Reformulation of the consistency error}.}
  To prove \eqref{eq:err.est}, we need to estimate the dual norm of the consistency error, which corresponds to the right-hand side of \eqref{eq:basic.err.est}. 
    We first recast $\mathrm{b}_h$. Recall the definition \eqref{eq:def.tIrot} of the modified interpolator $\tIrot{h}$ and notice that, by Remark \ref{rem:normal.comp}, it holds
    \begin{equation}\label{eq:tIrot:P1}
      (\Irot{h}\bvec{\eta},\uvec{\tau}_h)_{\bvec{\Theta},h}
      = (\tIrot{h}\bvec{\eta},\uvec{\tau}_h)_{\bvec{\Theta},h}
      \qquad\forall(\bvec{\eta},\uvec{\tau}_h)\in H^1(\Omega)^2\times\uvec{\Theta}_{h}^k.
    \end{equation}
  We can then write
  \begin{align*}
      \mathrm{b}_h((\Irot{h}\bvec{\theta},\Idis{h}u),(\uvec{\eta}_h,\underline{v}_h))
      &= \frac{\kappa}{t^2}(\tIrot{h}\bvec{\theta} - \uGh(\Idis{h}u), \uvec{\eta}_h - \uGh\underline{v}_h)_{\bvec{\Theta},h}    
      \\
      &= \frac{\kappa}{t^2}(\tIrot{h}(\bvec{\theta} - \GRAD u), \uvec{\eta}_h - \uGh\underline{v}_h)_{\bvec{\Theta},h}
      \\
      &= (\Irot{h}\bvec{\gamma}, \uGh\underline{v}_h - \uvec{\eta}_h)_{\bvec{\Theta},h},
  \end{align*}
  where we have used the definition \eqref{eq:ah.bh} of $\mathrm{b}_h$ along with \eqref{eq:tIrot:P1} in the first line,
  the key commutation property \eqref{eq:tIrot:P2} to pass to the second line,
  and the definition \eqref{eq:rm.def.gamma} of the shear strain $\bvec{\gamma}$ followed by \eqref{eq:tIrot:P1} to conclude. Expanding the inner product $(\cdot,\cdot)_{\bvec{\Theta},h}$ according to its definition from the local products \eqref{eq:l2.prod.Theta.T} and using the relation $\PT\uGT=\GT$ (see \cite[Proposition 15]{Di-Pietro.Droniou:21}), we infer
  \begin{equation}
      \mathrm{b}_h((\Irot{h}\bvec{\theta},\Idis{h}u),(\uvec{\eta}_h,\underline{v}_h))\\
      = \sum_{T\in\Th} \int_T\bvec{\gamma}\cdot(\GT\underline{v}_T - \PT\uvec{\eta}_T) -\term
  \label{eq:bh:interpolate}
  \end{equation}
  with
  \[
  \term\coloneq
  \sum_{T\in\Th} \int_T[\bvec{\gamma}-\PT(\Irot{T}\bvec{\gamma})]\cdot\PT(\uGT\underline{v}_T - \uvec{\eta}_T)
      + \sum_{T\in\Th}\mathrm{S}_{\bvec{\Theta},T}(\Irot{T}\bvec{\gamma},\uGT\underline{v}_T - \uvec{\eta}_T).
  \]
  
  Accounting for the definition of the material tensor $\tens{C}$, we also have, for all $\uvec{\tau}_h,\uvec{\eta}_h\in\uvec{\Theta}_h^k$,
  \begin{equation}\label{eq:ah.bis}
    \mathrm{a}_h(\uvec{\tau}_h,\uvec{\eta}_h)
    = \int_\Omega\tens{C}\Gsh\uvec{\tau}_h:\Gsh\uvec{\eta}_h
    + \beta_0\mathrm{s}_h(\uvec{\tau}_h,\uvec{\eta}_h)+ \beta_0\mathrm{j}_h(\uvec{\tau}_h,\uvec{\eta}_h).
  \end{equation}

  Recalling the definitions \eqref{eq:Ah} of $\mathrm{A}_h$ and $\ell_h$, the relations $\bvec{\gamma}=-\vDIV(\tens{C}\GRADs\bvec{\theta})$ and $f=-\DIV\bvec{\gamma}$ (see \eqref{eq:rm.balance1} and \eqref{eq:rm.balance2}) along with \eqref{eq:bh:interpolate} and \eqref{eq:ah.bis} shows that the consistency error \eqref{eq:consistency.error} can be recast as
  \begin{equation}\label{eq:Eh.decomposition}
    \mathcal{E}_h((\bvec{\theta},u);(\uvec{\eta}_h,\underline{v}_h))
    =\mathcal{E}_{\GRAD,h}(\bvec{\gamma};\underline{v}_h)+\term
    + \mathcal{E}_{\GRADs,h}(\tens{C}\GRADs\bvec{\theta};\uvec{\eta}_h),
  \end{equation}
  where the adjoint consistency errors for the gradient on $\underline{U}_h^k$ and for the symmetric gradient on $\uvec{\Theta}_h^k$ are defined as
  \begin{align*}
  \mathcal{E}_{\GRAD,h}(\bvec{\gamma};\underline{v}_h)
  \coloneq{}& -\int_\Omega \DIV\bvec{\gamma}~\PUh\underline{v}_h
  - \sum_{T\in\Th}\int_T\bvec{\gamma}\cdot\GT\underline{v}_T,
  \\
  \mathcal{E}_{\GRADs,h}(\tens{C}\GRADs\bvec{\theta};\uvec{\eta}_h)
    \coloneq{}&
  - \sum_{T\in\Th}\int_T\vDIV(\tens{C}\GRADs\bvec{\theta})\cdot\PT\uvec{\eta}_T
  - \int_\Omega\tens{C}\Gsh(\Irot{h}\bvec{\theta}):\Gsh\uvec{\eta}_h\\
  &- \beta_0\mathrm{s}_h(\Irot{h}\bvec{\theta},\uvec{\eta}_h)- \beta_0\mathrm{j}_h(\Irot{h}\bvec{\theta},\uvec{\eta}_h).
  \end{align*}
  \\
  \underline{3. \emph{Bound on the consistency error}.} 
  To deal with $\mathcal E_{\GRADs,h}$, we use the estimate in \cite[Lemma 7.27]{Di-Pietro.Droniou:20} (and \cite[Lemma 7.43]{Di-Pietro.Droniou:20} if $k=0$) which, in the present context, yields
  \begin{align}
    |\mathcal{E}_{\GRADs,h}(\tens{C}\GRADs\bvec{\theta};\uvec{\eta}_h)|&\lesssim h^{k+1}(\beta_0 + \beta_1)\seminorm[H^{k+2}(\Th)^2]{\bvec{\theta}}\left(
    \norm[L^2(\Omega)^{2\times 2}]{\Gsh\uvec{\eta}_h}^2
    + \seminorm[{\rm s,j},h]{\uvec{\eta}_h}^2
    \right)^{\frac12}
    \nonumber\\
    &\lesssim
    h^{k+1}\beta_0^{-\frac12}(\beta_0 + \beta_1)\seminorm[H^{k+2}(\Th)^2]{\bvec{\theta}}
    \norm[\bvec{\Theta}\times U,h]{(\uvec{\eta}_h,\underline{v}_h)},
    \label{est:Egrads}
  \end{align}
  where the conclusion follows from the definition \eqref{eq:norm.en.h} of the discrete norm.

  The term $\term$ is estimated using Cauchy--Schwarz inequalities:
  \begin{align*}
    |\term|\le{}&
    \sum_{T\in\Th}\norm[L^2(T)^2]{\bvec{\gamma}-\PT(\Irot{T}\bvec{\gamma})}~\norm[L^2(T)^2]{\PT(\uGT\underline{v}_T-\uvec{\eta}_T)} \\
  &+ \sum_{T\in\Th}\mathrm{S}_{\bvec{\Theta},T}(\Irot{T}\bvec{\gamma},\Irot{T}\bvec{\gamma})^{\frac12}~\mathrm{S}_{\bvec{\Theta},T}(\uGT\underline{v}_T-\uvec{\eta}_T,\uGT\underline{v}_T-\uvec{\eta}_T)^{\frac12}\\
    \lesssim{}& \sum_{T\in\Th} h^{k+1}\seminorm[H^{k+1}(T)^2]{\bvec{\gamma}}\left(\norm[L^2(T)^2]{\PT(\uGT\underline{v}_T-\uvec{\eta}_T)}+\mathrm{S}_{\bvec{\Theta},T}(\uGT\underline{v}_T-\uvec{\eta}_T,\uGT\underline{v}_T-\uvec{\eta}_T)^{\frac12}\right)\\
    \lesssim{}& h^{k+1}\seminorm[H^{k+1}(\Th)^2]{\bvec{\gamma}}\norm[\bvec{\Theta},h]{\uGh\underline{v}_h-\uvec{\eta}_h},
  \end{align*}
  where we have used, in the second line, the consistency properties of $\PT\Irot{T}$ and $\mathrm{S}_{\bvec{\Theta},T}$ (two-dimensional versions of \cite[Eqs. (6.3) and (6.9)]{Di-Pietro.Droniou:21}, see Remark \ref{rem:cons.2D} below), and the conclusion follows from Cauchy--Schwarz inequalities on the sum and the definition of the norm $\norm[\bvec{\Theta},h]{{\cdot}}$.
  Using the definition \eqref{eq:norm.en.h} of the discrete norm, we infer
  \begin{equation}\label{est:T1}
  |\term|\lesssim h^{k+1} \kappa^{-\frac12} t\seminorm[H^{k+1}(\Th)^2]{\bvec{\gamma}}\norm[\bvec{\Theta}\times U,h]{(\uvec{\eta}_h,\underline{v}_h)}.
  \end{equation}
  
  The estimate of $\mathcal E_{\GRAD,h}$ follows proceeding as in the proof of \cite[Theorem 39]{Di-Pietro.Droniou:21}, with straightforward modifications to account for the different boundary conditions, fewer (and simpler) terms to track, and accounting for Remark \ref{rem:cons.2D}:  
  \begin{equation}\label{est:Egrad}
    |\mathcal E_{\GRAD,h}(\bvec{\gamma};\underline{v}_h)|
    \lesssim h^{k+1}\seminorm[H^{k+1}(\Th)^2]{\bvec{\gamma}}
    \norm[\bvec{\Theta},h]{\uGh\underline{v}_h}
    \le h^{k+1}\mu^{-\frac12}\seminorm[H^{k+1}(\Th)^2]{\bvec{\gamma}}
    \norm[\bvec{\Theta}\times U,h]{(\uvec{\eta}_h,\underline{v}_h)}.
  \end{equation}
  \\
  \underline{4. \emph{Conclusion}.}
  Plugging the estimates \eqref{est:Egrads}, \eqref{est:T1} and \eqref{est:Egrad} into \eqref{eq:Eh.decomposition}, we arrive at
  \begin{multline*}
    |\mathcal{E}_h((\bvec{\theta},u);(\uvec{\eta}_h,\underline{v}_h))|
    \\
    \lesssim h^{k+1}\left(
      \beta_0^{-\frac12}(\beta_0 + \beta_1)\seminorm[H^{k+2}(\Th)^2]{\bvec{\theta}}
      + \kappa^{-\frac12} t \seminorm[H^{k+1}(\Th)^2]{\bvec{\gamma}}
      + \mu^{-\frac12}\seminorm[H^{k+1}(\Th)^2]{\bvec{\gamma}}
      \right)
    \norm[\bvec{\Theta}\times U,h]{(\uvec{\eta}_h,\underline{v}_h)}.
  \end{multline*}
  The estimate \eqref{eq:err.est} follows using this bound in \eqref{eq:basic.err.est} and recalling that $\mu\le \kappa$ and $t\le 1$.
\end{proof}

\begin{remark}[Norms of {$\bvec{\gamma}$} in the estimates]\label{rem:cons.2D}
  In \cite{Di-Pietro.Droniou:21}, the estimates mentioned above (Theorem 39 and Section 6.1) involve, in the case $k=0$, weighted $H^2$-seminorms of $\bvec{\gamma}$. This is because these estimates are stated in three dimensions, in which interpolating a function on $\uvec{\Theta}_h^k$ requires a higher minimal regularity (to ensure traces along the edges are well-defined). In two dimensions, the local interpolator obtained restricting \eqref{eq:Irot.h} to $T$ is well-defined on $H^1(T)^2$, and the seminorm in this space is sufficient to state the consistency estimates for $k=0$.
\end{remark}

\subsection{Proof of the low-order locking-free error estimate}\label{sec:proof.th.error.est.k0}

The proof of Theorem \ref{th:error.est.k0} relies on liftings of elements in $\underline{U}_T^0$ and $\uvec{\Theta}_T^0$, for each $T\in\Th$. The assumption on the mesh yields a conforming simplicial subdivision $\ST$ of $T$ that is shape regular (with the same regularity parameter as in the mesh regularity assumption); actually, by \cite[Assumption 7.6]{Di-Pietro.Droniou:20} each $T\in\Th$ is star-shaped with respect to every point in a ball of radius $\varrho h_T$, where $\varrho$ is the mesh regularity parameter, so $\ST$ can be constructed by adding only one vertex (the center of that ball) in $T$ and creating the triangles between this vertex and the edges of $T$.
The proof given here, however, applies also to elements that are possibly not star-shaped.

The coordinate $\bvec{x}_V$ of any vertex $V$ of $\ST$ can be written as a convex combination of the coordinates of the vertices $\VT$ of $T$:
\begin{equation}\label{eq:lambda}
  \bvec{x}_V=\sum_{W\in\VT}\lambda_{V,W}\bvec{x}_{W}\,,\quad\mbox{ with $\lambda_{V,W}\ge 0$ and }\sum_{W\in\VT}\lambda_{V,W}=1
\end{equation}
(this includes the vertices $V\in\VT$, in which case we choose $\lambda_{V,V}=1$ and $\lambda_{V,W}=0$ if $W\not=V$).
Denoting by $\Poly[\rm c]{1}(\ST)$ the space of $H^1(T)$-conforming piecewise $\Poly{1}$ functions on $\ST$, for all $\underline{z}_T=z_{\ET}\in\underline{U}_T^0$ we define $\widetilde{z}_T\in\Poly[\rm c]{1}(\ST)$ such that 
\[
\widetilde{z}_T(\bvec{x}_V)=\sum_{W\in\VT}\lambda_{V,W}\,z_{\ET}(\bvec{x}_{W}).
\]
This construction is linearly exact, that is
\begin{equation}\label{eq:tilde.linearly.exact}
\widetilde{\Idisz{T}\phi_T}=\phi_T\qquad\forall\phi_T\in\Poly{1}(T).
\end{equation}
The next lemma, whose proof is postponed to the end of the section, states useful properties of the lifting $\underline{U}_T^0\ni\underline{z}_T\mapsto \widetilde{z}_T\in \Poly[\rm c]{1}(\ST)$.

\begin{lemma}[Properties of the lifting on $\underline{U}_T^0$]\label{lem:lift.U}
The following properties hold: For all $\underline{z}_T\in\underline{U}_T^0$,
\begin{align}
\label{lem:lift.U.proj} 
\vlproj{0}{T}(\GRAD\widetilde{z}_T)={}&\GT[0]\underline{z}_T,\\
\label{lem:lift.U.norm}
\norm[L^2(T)^2]{\GRAD\widetilde{z}_T}\lesssim{}& \norm[\bvec{\Theta},T]{\uGT[0]\underline{z}_T},\\
\label{lem:lift.U.diff} 
\norm[L^2(T)]{\widetilde{z}_T-\PUT[1]\underline{z}_T}\lesssim{}& h_T\norm[\bvec{\Theta},T]{\uGT[0]\underline{z}_T},
\end{align}
where $\norm[\bvec{\Theta},T]{{\cdot}}$ is the local seminorm induced by the product \eqref{eq:l2.prod.Theta.T} on $\uvec{\Theta}_T^0$.
Moreover, if $\underline{z}_h\in \underline{U}_{h,0}^0$ and $\widetilde{z}_h$ is defined such that $(\widetilde{z}_h)_{|T}=\widetilde{z}_T$ for all $T\in\Th$, then $\widetilde{z}_h\in H^1_0(\Omega)$.
\end{lemma}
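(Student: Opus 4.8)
\textbf{Proof plan for Lemma~\ref{lem:lift.U}.}
The plan is to establish the three estimates by comparing the conforming lifting $\widetilde z_T$ with the DDR reconstructions through polynomial consistency and scaling arguments on the shape-regular subtriangulation $\ST$. First I would prove \eqref{lem:lift.U.proj}. Since $\GRAD\widetilde z_T\in\vPoly[]{0}(\ST)$ is only piecewise constant, I unfold the definition of $\vlproj{0}{T}(\GRAD\widetilde z_T)$ by testing against an arbitrary $\bvec\eta\in\vPoly{0}(T)$ and integrating by parts simplex by simplex: $\int_T\GRAD\widetilde z_T\cdot\bvec\eta=\sum_{S\in\ST}\int_{\partial S}\widetilde z_T\,(\bvec\eta\cdot\normal)$. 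The internal faces of $\ST$ cancel because $\widetilde z_T$ is single-valued (it lies in $H^1(T)$), leaving only boundary contributions $\sum_{E\in\ET}\int_E\widetilde z_T(\bvec\eta\cdot\normal_{TE})$. Here I would use that, by construction, $(\widetilde z_T)_{|E}$ is the affine interpolant of $z_{\ET}$ at the two endpoints of $E$, hence $\lproj{0}{E}(\widetilde z_T)_{|E}=\lproj{0}{E}(z_{\ET})_{|E}$ (the mean of an affine function equals the mean of its Lagrange interpolant on a segment only up to the fact that $z_{\ET}$ has degree one, so in fact $(\widetilde z_T)_{|E}=(z_{\ET})_{|E}$ since both are affine and agree at the endpoints). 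Comparing with the definition \eqref{eq:def.GT} of $\GT[0]$ for $k=0$ (where $v_T\in\Poly{-1}(T)=\{0\}$, so only the boundary term survives) gives the identity.

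Next, for \eqref{lem:lift.U.norm} I would argue by scaling. On each simplex $S\in\ST$, $\GRAD\widetilde z_T$ is constant and controlled by $h_S^{-1}$ times the oscillation of the vertex values of $\widetilde z_T$ over $S$; summing over $\ST$ and using shape-regularity ($h_S\simeq h_T$, bounded number of simplices after accounting for the regularity parameter, or more carefully a local argument) one bounds $\norm[L^2(T)^2]{\GRAD\widetilde z_T}$ by $h_T^{-1}$ times a discrete $\ell^2$-norm of differences of the vertex values $z_{\ET}(\bvec x_W)$, $W\in\VT$. Since each vertex value is the trace at an endpoint of the affine edge polynomial $(z_{\ET})_{|E}$, an inverse/trace inequality on each edge bounds $|z_{\ET}(\bvec x_W)-\text{(edge mean)}|$ by $h_E^{-1/2}$ times the $L^2(E)$-norm of $(z_{\ET})_{|E}$ minus its mean, and a further comparison identifies this, up to constants, with the stabilisation-type quantity $h_E^{1/2}\norm[L^2(E)]{(\GT[0]\underline z_T\cdot\tangent_E)-\ldots}$ appearing in $\mathrm{S}_{\bvec\Theta,T}$; cf.\ \eqref{eq:l2.prod.Theta.T} and the definition of $\uGT[0]$ whose edge component is $(z_{\ET})_{|E}'\tangent_E$. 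After collecting terms and using \eqref{lem:lift.U.proj} to absorb the elementary component into $\norm[L^2(T)^2]{\GT[0]\underline z_T}=\norm[L^2(T)^2]{\PT[0]\uGT[0]\underline z_T}$, one obtains the bound by $\norm[\bvec\Theta,T]{\uGT[0]\underline z_T}^2=\norm[L^2(T)^2]{\PT[0]\uGT[0]\underline z_T}^2+\mathrm{S}_{\bvec\Theta,T}(\uGT[0]\underline z_T,\uGT[0]\underline z_T)$.

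For \eqref{lem:lift.U.diff}, the strategy is to use linear exactness \eqref{eq:tilde.linearly.exact} together with \eqref{lem:lift.U.proj}. Both $\widetilde z_T$ and $\PUT[1]\underline z_T$ reproduce degree-one polynomials (the latter by the standard DDR potential consistency, since for $\phi\in\Poly{1}(T)$ one has $\PUT[1]\Idisz{T}\phi=\phi$), so $\widetilde z_T-\PUT[1]\underline z_T$ vanishes on $\Poly{1}(T)$; writing $w\coloneq\widetilde z_T-\PUT[1]\underline z_T$ and subtracting a common linear polynomial, a Poincaré--Wirtinger inequality on $T$ gives $\norm[L^2(T)]{w-\lproj{0}{T}w}\lesssim h_T\norm[L^2(T)^2]{\GRAD w}$, and $\GRAD w=\GRAD\widetilde z_T-\GRAD\PUT[1]\underline z_T$ is handled by \eqref{lem:lift.U.norm} for the first term and by the known $L^2$-boundedness of the DDR potential gradient for the second, both controlled by $\norm[\bvec\Theta,T]{\uGT[0]\underline z_T}$. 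The mean $\lproj{0}{T}w$ requires separate care: one uses the closure equation defining $\PUT[1]$ for $k=0$ (which fixes $\int_{\partial T}\PUT[1]\underline z_T$) against the matching trace property of $\widetilde z_T$, or alternatively the relation $\GT[0]\underline z_T=\vlproj{0}{T}\GRAD\PUT[1]\underline z_T$ combined with \eqref{lem:lift.U.proj}, to show the averages coincide up to $h_T\norm[\bvec\Theta,T]{\uGT[0]\underline z_T}$. Finally, the global $H^1_0(\Omega)$ claim follows because single-valuedness of $v_{\Eh}$ at mesh vertices forces the local liftings $\widetilde z_T$ to share vertex values across each interface and hence agree on each edge (they are affine on edges and determined by endpoints), giving $H^1$-conformity, while the boundary condition $(v_{\Eh})_{|\partial\Omega}=0$ kills the vertex values on $\partial\Omega$ and thus $\widetilde z_h$ on $\partial\Omega$. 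The main obstacle I anticipate is the bookkeeping in \eqref{lem:lift.U.norm} and the mean-value part of \eqref{lem:lift.U.diff}: one must carefully match the purely combinatorial differences of vertex values to the edge stabilisation seminorm $\mathrm{S}_{\bvec\Theta,T}$ with constants independent of the number and size of edges, which is exactly the robustness feature the $h_T^{-1}$ (resp.\ $h_E$) scalings in \eqref{eq:difference.operators} and \eqref{eq:l2.prod.Theta.T} are designed to provide; getting these scalings to line up correctly, rather than the algebra itself, is the delicate point.
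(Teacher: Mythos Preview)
Your approach to \eqref{lem:lift.U.proj} and to the global $H^1_0$-conformity matches the paper's. For \eqref{lem:lift.U.norm}, the paper takes a slightly cleaner path: rather than decomposing vertex differences edge-by-edge and matching them to the stabilisation, it bounds $|z_{\ET}(\bvec x_W)-z_{\ET}(\bvec x_Z)|$ for \emph{any} pair $W,Z\in\VT$ directly by $\|z_{\ET}'\|_{L^1(\partial T)}\le|\partial T|^{1/2}\|z_{\ET}'\|_{L^2(\partial T)}$ (integrating the derivative along $\partial T$), and then invokes the norm equivalence between $\norm[\bvec\Theta,T]{\cdot}$ and the component $L^2$-norm. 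This avoids any detailed matching to $\mathrm S_{\bvec\Theta,T}$.

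For \eqref{lem:lift.U.diff} the paper takes a genuinely different route: instead of Poincar\'e--Wirtinger plus a separate mean estimate, it proves and applies a Poincar\'e inequality with boundary trace,
\[
\norm[L^2(T)]{w_T}^2\lesssim h_T^2\norm[L^2(T)^2]{\GRAD w_T}^2+\sum_{E\in\ET}h_E\norm[L^2(E)]{w_T}^2,
\]
to $w_T=\widetilde z_T-\PUT[1]\underline z_T$, and then uses $(\widetilde z_T)_{|E}=(z_{\ET})_{|E}$ to turn the boundary term into $\sum_E h_E\norm[L^2(E)]{z_{\ET}-\PUT[1]\underline z_T}^2$, which together with $\norm[L^2(T)^2]{\GRAD\PUT[1]\underline z_T}^2$ is controlled by $\norm[\bvec\Theta,T]{\uGT[0]\underline z_T}^2$ via known DDR bounds. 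This sidesteps the mean-value issue entirely.

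Your mean-value sketch has a gap as written: $\PUT[1]$ has no closure equation on $\partial T$ (you may be thinking of $\pTHHO$), and the relation $\vlproj{0}{T}\GRAD\PUT[1]\underline z_T=\GT[0]\underline z_T$ combined with \eqref{lem:lift.U.proj} only yields $\vlproj{0}{T}\GRAD w=0$, which carries no information about $\lproj{0}{T}w$. A correct fix does exist along your lines: test the definition of $\PUT[1]$ with $\bvec\eta=\bvec x-\bvec x_T\in\cRoly{2}(T)$ (so that $\DIV\bvec\eta=2$), compare with the analogous integration-by-parts identity for $\widetilde z_T$, use the matching boundary traces, and reduce $2\int_T w$ to $-\int_T(\GRAD\widetilde z_T-\GT[0]\underline z_T)\cdot(\bvec x-\bvec x_T)$, which is of size $h_T|T|^{1/2}\norm[\bvec\Theta,T]{\uGT[0]\underline z_T}$ by \eqref{lem:lift.U.norm}. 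So your strategy is salvageable, but the paper's trace-Poincar\'e argument is more direct and avoids this bookkeeping.
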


We now define the lifting on $\uvec{\Theta}_T^0$. For any $\uvec{\eta}_T=(\bvec{\eta}_E)_{E\in\ET}\in\uvec{\Theta}_T^0$, let $\uvec{\eta}_T^\flat=((\bvec{\eta}_E\cdot\tangent_E)\tangent_E)_{E\in\ET}$ be the vector comprising only the tangential components to the edges. Since $(\cdot,\cdot)_{\bvec{\Theta},T}$ is an inner product when only these components are considered, we can write a unique decomposition
\[
\uvec{\eta}_T^\flat=\uGT[0]\underline{w}_T + \uvec{\kappa}_T\mbox{ with $\underline{w}_T\in\underline{U}_T^0$ and $\uvec{\kappa}_T\perp \uGT[0]\underline{U}_T^0$},
\]
the orthogonality being understood for $(\cdot,\cdot)_{\bvec{\Theta},T}$. We then set
\begin{equation*}
  \tvec{\eta}_T\coloneq\GRAD\widetilde{w}_T + \PT[0]\uvec{\kappa}_T.
\end{equation*}
The proof of the properties of the lifting $\uvec{\Theta}_T^0\ni\uvec{\eta}_T\mapsto \tvec{\eta}_T\in L^2(T)^2$ stated in the following lemma is postponed to the end of the section.

\begin{lemma}[Properties of the lifting on $\uvec{\Theta}_T^0$]\label{lem:lift.Theta}
The following properties hold: For all $\uvec{\eta}_T\in\uvec{\Theta}_T^0$, 
\begin{align}
\label{lem:lift.Theta.proj} 
\vlproj{0}{T}\tvec{\eta}_T={}&\PT[0]\uvec{\eta}_T,\\
\label{lem:lift.Theta.norm}
\norm[L^2(T)^2]{\GRAD\widetilde{v}_T-\tvec{\eta}_T}\lesssim{}&\norm[\bvec{\Theta},T]{\uGT[0]\underline{v}_T-\uvec{\eta}_T}\qquad
\forall \underline{v}_T\in\underline{U}_T^0.
\end{align}
Moreover, for all $\uvec{\eta}_h\in \uvec{\Theta}_{h,0}^0$,
\begin{equation}\label{lem:lift.Theta.diff}
\left(\sum_{T\in\Th}\norm[L^2(T)^2]{\tvec{\eta}_T-\PT[0]\uvec{\eta}_T}^2\right)^{\frac12}
\lesssim h\left(\norm[L^2(\Omega)^{2\times 2}]{\Gsh[0]\uvec{\eta}_h}^2+\seminorm[{\rm s,j},h]{\uvec{\eta}_h}^2\right)^{\frac12}.
\end{equation}
\end{lemma}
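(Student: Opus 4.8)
The plan is to establish the three properties in order, exploiting the decomposition $\uvec{\eta}_T^\flat=\uGT[0]\underline{w}_T+\uvec{\kappa}_T$ and the corresponding definition $\tvec{\eta}_T=\GRAD\widetilde{w}_T+\PT[0]\uvec{\kappa}_T$, and leveraging the already-established Lemma~\ref{lem:lift.U} throughout. For \eqref{lem:lift.Theta.proj}, I would apply $\vlproj{0}{T}$ to $\tvec{\eta}_T$, use linearity, and treat the two summands separately: the first contributes $\vlproj{0}{T}(\GRAD\widetilde{w}_T)=\GT[0]\underline{w}_T$ by \eqref{lem:lift.U.proj}, and $\GT[0]\underline{w}_T$ equals the $\vPoly{0}(T)$ part of $\uGT[0]\underline{w}_T$, which by \eqref{eq:projections.PT} and the relation $\PT\uGT=\GT$ relates to $\PT[0](\uGT[0]\underline{w}_T)$; the second summand is already polynomial so $\vlproj{0}{T}(\PT[0]\uvec{\kappa}_T)=\PT[0]\uvec{\kappa}_T$. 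Recombining and using linearity of $\PT[0]$ together with the decomposition $\uvec{\eta}_T^\flat=\uGT[0]\underline{w}_T+\uvec{\kappa}_T$, plus the fact (visible from \eqref{eq:def.RT}, \eqref{eq:def.PT}) that $\PT[0]$ does not see normal edge components so $\PT[0]\uvec{\eta}_T^\flat=\PT[0]\uvec{\eta}_T$, yields the claim; some care is needed to make the $k=0$ specialisations of \eqref{eq:projections.PT} and $\PT\uGT=\GT$ explicit.

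For \eqref{lem:lift.Theta.norm}, the key identity is the algebraic one $\GRAD\widetilde{v}_T-\tvec{\eta}_T=\GRAD(\widetilde{v}_T-\widetilde{w}_T)-\PT[0]\uvec{\kappa}_T=\GRAD\widetilde{(\underline{v}_T-\underline{w}_T)}-\PT[0]\uvec{\kappa}_T$, using linearity of the lifting $\underline{z}_T\mapsto\widetilde{z}_T$. I would bound the $\GRAD$ term by \eqref{lem:lift.U.norm} applied to $\underline{v}_T-\underline{w}_T$, obtaining $\norm[\bvec{\Theta},T]{\uGT[0](\underline{v}_T-\underline{w}_T)}$, and bound $\norm[L^2(T)^2]{\PT[0]\uvec{\kappa}_T}$ by the $L^2$-boundedness of $\PT[0]$ (the $k=0$ case of \cite[Proposition~27 and Lemma~28]{Di-Pietro.Droniou:21}, as invoked earlier for $\PT\Irot{T}$), giving $\lesssim\norm[\bvec{\Theta},T]{\uvec{\kappa}_T}$. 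It then remains to control $\norm[\bvec{\Theta},T]{\uGT[0](\underline{v}_T-\underline{w}_T)}^2+\norm[\bvec{\Theta},T]{\uvec{\kappa}_T}^2$ by $\norm[\bvec{\Theta},T]{\uGT[0]\underline{v}_T-\uvec{\eta}_T}^2$: writing $\uGT[0]\underline{v}_T-\uvec{\eta}_T^\flat=\uGT[0](\underline{v}_T-\underline{w}_T)-\uvec{\kappa}_T$ and invoking the $(\cdot,\cdot)_{\bvec{\Theta},T}$-orthogonality of $\uvec{\kappa}_T$ to $\uGT[0]\underline{U}_T^0$, a Pythagorean identity gives $\norm[\bvec{\Theta},T]{\uGT[0]\underline{v}_T-\uvec{\eta}_T^\flat}^2=\norm[\bvec{\Theta},T]{\uGT[0](\underline{v}_T-\underline{w}_T)}^2+\norm[\bvec{\Theta},T]{\uvec{\kappa}_T}^2$; finally $\norm[\bvec{\Theta},T]{\uGT[0]\underline{v}_T-\uvec{\eta}_T^\flat}=\norm[\bvec{\Theta},T]{\uGT[0]\underline{v}_T-\uvec{\eta}_T}$ because, by Remark~\ref{rem:normal.comp}, the seminorm $\norm[\bvec{\Theta},T]{{\cdot}}$ only sees tangential edge components.

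For \eqref{lem:lift.Theta.diff}, the strategy is to compare $\tvec{\eta}_T-\PT[0]\uvec{\eta}_T$ with a quantity controlled by the (global) discrete symmetric gradient and stabilisation. Using \eqref{lem:lift.Theta.proj}, $\tvec{\eta}_T-\PT[0]\uvec{\eta}_T$ has zero $\vPoly{0}(T)$-average, so a local Poincaré--Wirtinger inequality on $T$ (applicable via the simplicial subdivision $\ST$, since $\tvec{\eta}_T$ is piecewise-smooth on $\ST$) bounds $\norm[L^2(T)^2]{\tvec{\eta}_T-\PT[0]\uvec{\eta}_T}\lesssim h_T\norm[L^2(T)^{2\times2}]{\GRAD\tvec{\eta}_T}$, with $\GRAD$ taken $\ST$-piecewise. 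I would then express $\GRAD\tvec{\eta}_T=\GRAD(\GRAD\widetilde{w}_T)+\GRAD(\PT[0]\uvec{\kappa}_T)$; the second term is the gradient of a $\vPoly{0}$ function hence vanishes, while $\GRAD(\GRAD\widetilde{w}_T)$ is the $\ST$-piecewise Hessian of the $\Poly[\rm c]{1}(\ST)$ function $\widetilde{w}_T$, so it reduces to edge/interface jump contributions of $\GRAD\widetilde{w}_T$. I would relate these jumps, and the broken $H^1$-type quantity, to the HHO higher-order reconstruction $\pTHHO[1]$ and the stabilisation $\mathrm{s}_T$ and jump term $\mathrm{j}_h$: by construction $\widetilde{w}_T$ is built from the same tangential edge data that, through $\uGT[0]\underline{w}_T$ and the embedding $\injHHO{T}$, feed into $\pTHHO[1]$, so the interface jumps of $\GRAD\widetilde{w}_T$ are controlled (up to $h$-weights) by $\seminorm[{\rm s,j},h]{\uvec{\eta}_h}$; the bulk part is controlled by $\norm[L^2(\Omega)^{2\times2}]{\Gsh[0]\uvec{\eta}_h}$ via a discrete Korn-type estimate combined with \eqref{lem:lift.Theta.norm} and the Poincaré inequality for $\uGh$. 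Summing over $T\in\Th$ and using the global boundary condition $\uvec{\eta}_h\in\uvec{\Theta}_{h,0}^0$ (so that boundary jump terms are included in $\mathrm{j}_h$ and $\widetilde{z}_h\in H^1_0(\Omega)$ per Lemma~\ref{lem:lift.U}) gives \eqref{lem:lift.Theta.diff}. I expect \eqref{lem:lift.Theta.diff} to be the main obstacle: the precise bookkeeping that ties the piecewise-affine lifting's interface jumps to the HHO stabilisation $\mathrm{s}_h+\mathrm{j}_h$ and the symmetric-gradient norm — rather than to the full (non-symmetric) gradient — is the delicate point, and is exactly where the modified stabilisation and the extra jump term $\mathrm{j}_h$ in the case $k=0$ are needed.
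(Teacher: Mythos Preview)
Your arguments for \eqref{lem:lift.Theta.proj} and \eqref{lem:lift.Theta.norm} are essentially those of the paper: apply \eqref{lem:lift.U.proj} and $\PT[0]\uGT[0]=\GT[0]$ for the first, and for the second combine \eqref{lem:lift.U.norm} on $\underline{v}_T-\underline{w}_T$, the $L^2$-boundedness of $\PT[0]$, and the Pythagorean identity coming from the orthogonality of $\uvec{\kappa}_T$ to $\uGT[0]\underline{U}_T^0$.

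Your plan for \eqref{lem:lift.Theta.diff}, however, has a genuine gap. The Poincar\'e--Wirtinger inequality you state, $\norm[L^2(T)^2]{\tvec{\eta}_T-\PT[0]\uvec{\eta}_T}\lesssim h_T\norm[L^2(T)^{2\times2}]{\GRAD\tvec{\eta}_T}$ with the $\ST$-broken gradient, is simply false: $\tvec{\eta}_T$ is \emph{piecewise constant} on $\ST$ (since $\widetilde{w}_T\in\Poly[\rm c]{1}(\ST)$ and $\PT[0]\uvec{\kappa}_T\in\vPoly{0}(T)$), so the right-hand side vanishes identically while the left-hand side does not. You recognise this when you say the estimate ``reduces to edge/interface jump contributions'', but that means the inequality must include jump terms from the outset. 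Even granting a broken Poincar\'e inequality with jumps, those jumps live on the \emph{interior} edges of the sub-triangulation $\ST$, which bear no direct relation to the mesh edges $\ET$ on which $\mathrm{s}_h$ and $\mathrm{j}_h$ are defined. A crude bound on these interior jumps via the argument of \eqref{lem:lift.U.norm} yields only $\norm[L^2(T)^2]{\tvec{\eta}_T-\PT[0]\uvec{\eta}_T}\lesssim\norm[\bvec{\Theta},T]{\uGT[0]\underline{w}_T}\le\norm[\bvec{\Theta},T]{\uvec{\eta}_T}$, which after summation and the discrete Korn inequality gives the right-hand side of \eqref{lem:lift.Theta.diff} \emph{without} the crucial factor $h$.

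The paper's route is quite different and avoids the sub-triangulation altogether. It introduces the affine function $\phi_T(\bvec{x})\coloneq\PT[0]\uvec{\eta}_T\cdot(\bvec{x}-\bvec{x}_T)$, for which the linear exactness \eqref{eq:tilde.linearly.exact} gives $\PT[0]\uvec{\eta}_T=\GRAD\widetilde{\Idisz{T}\phi_T}$; applying the already-proved \eqref{lem:lift.Theta.norm} with $\underline{v}_T=\Idisz{T}\phi_T$ and the commutation property \eqref{eq:tIrot:P2} then reduces $\norm[L^2(T)^2]{\tvec{\eta}_T-\PT[0]\uvec{\eta}_T}^2$ to $h_T^2\sum_{E\in\ET}h_E^{-1}\norm[L^2(E)^2]{\PT[0]\uvec{\eta}_T-\bvec{\eta}_E}^2$. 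This last quantity is a standard HHO edge residual, bounded globally by $\norm[L^2(\Omega)^{2\times 2}]{\Gsh[0]\uvec{\eta}_h}^2+\seminorm[{\rm s,j},h]{\uvec{\eta}_h}^2$ via the Korn-type estimates of \cite[Eqs.~(7.103) and (7.109)]{Di-Pietro.Droniou:20}. The key idea you are missing is to reuse \eqref{lem:lift.Theta.norm} itself, with a well-chosen $\underline{v}_T$, rather than to introduce a new Poincar\'e-type argument.
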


We are now ready to prove Theorem \ref{th:error.est.k0}.

\begin{proof}[Proof of Theorem \ref{th:error.est.k0}]
Given the basic error estimate \eqref{eq:basic.err.est}, we only have to find a proper upper bound of the consistency error.
We consider the first term in the expression \eqref{eq:bh:interpolate} of $\mathrm{b}_h((\Irot{h}\bvec{\theta},\Idis{h}u),(\uvec{\eta}_h,\underline{v}_h))$. Owing to \eqref{lem:lift.U.proj} and \eqref{lem:lift.Theta.proj} we have
\begin{align*}
\int_T\bvec{\gamma}\cdot(\GT[0]\underline{v}_T - \PT[0]\uvec{\eta}_T)={}&\int_T \bvec{\gamma}\cdot \vlproj{0}{T}(\GRAD\widetilde{v}_T-\tvec{\eta}_T)
=\int_T \vlproj{0}{T}\bvec{\gamma}\cdot (\GRAD\widetilde{v}_T-\tvec{\eta}_T)\\
={}&\int_T \bvec{\gamma}\cdot (\GRAD\widetilde{v}_T-\tvec{\eta}_T)+\underbrace{\int_T (\vlproj{0}{T}\bvec{\gamma}-\bvec{\gamma})\cdot (\GRAD\widetilde{v}_T-\tvec{\eta}_T)}_{\eqcolon\term_{T,a}}\\
={}&\int_T\bvec{\gamma}\cdot\GRAD \widetilde{v}_T-\int_T \bvec{\gamma}\cdot\PT[0]\uvec{\eta}_T
+\underbrace{\int_T \bvec{\gamma}\cdot(\PT[0]\uvec{\eta}_T-\tvec{\eta}_T)}_{\eqcolon\term_{T,b}}+\term_{T,a}.
\end{align*}
Summing over $T\in\Th$, using the fact that $\widetilde{v}_h\in H^1_0(\Omega)$ (see Lemma \ref{lem:lift.U}) to perform an integration by parts, recalling that $f=-\DIV\bvec{\gamma}$ by \eqref{eq:rm.balance2}, and setting $\term_{\star}\coloneq\sum_{T\in\Th}\term_{T,\star}$ for $\star\in\{a,b\}$, we infer
\begin{align*}
\sum_{T\in\Th}\int_T\bvec{\gamma}\cdot(\GT[0]\underline{v}_T - \PT[0]\uvec{\eta}_T)
&=\int_\Omega f~\widetilde{v}_h-\sum_{T\in\Th}\int_T \bvec{\gamma}\cdot\PT[0]\uvec{\eta}_T+\term_a+\term_b\\
&=\int_\Omega f~\PUh[1]\underline{v}_h
-\sum_{T\in\Th}\int_T \bvec{\gamma}\cdot\PT[0]\uvec{\eta}_T+
\underbrace{\int_\Omega f(\widetilde{v}_h-\PUh[1]\underline{v}_h)}_{\eqcolon\term_c}{}+\term_a+\term_b.
\end{align*}
We plug this relation into the expression \eqref{eq:bh:interpolate} of $\mathrm{b}_h((\Irot{h}\bvec{\theta},\Idis{h}u),(\uvec{\eta}_h,\underline{v}_h))$ and recall \eqref{eq:ah.bis} and the definitions \eqref{eq:Ah} of $\mathrm{A}_h$ and $\ell_h$ to re-write  consistency error \eqref{eq:consistency.error} as
\begin{equation}\label{eq:Eh.decomposition.k0}
  \mathcal{E}_h((\bvec{\theta},u);(\uvec{\eta}_h,\underline{v}_h))
      = \mathcal{E}_{\GRADs,h}(\tens{C}\GRADs\bvec{\theta};\uvec{\eta}_h)
        -\term_a-\term_b-\term_c+\term.
\end{equation}
We now estimate $\term_a$, $\term_b$ and $\term_c$. Using the approximation properties of $\vlproj{0}{T}$ together with Cauchy--Schwarz inequalities and \eqref{lem:lift.Theta.norm}, we have
\begin{equation}\label{est:term_a}
|\term_a|\lesssim h\seminorm[H^1(\Th)^2]{\bvec{\gamma}}\norm[\bvec{\Theta},h]{\uGh[0]\underline{v}_h-\uvec{\eta}_h}
\lesssim h\seminorm[H^1(\Th)^2]{\bvec{\gamma}}\kappa^{-\frac12}t\norm[\bvec{\Theta}\times U,h]{(\uvec{\eta}_h,\underline{v}_h)},
\end{equation}
where we have used the definition \eqref{eq:norm.en.h} of $\norm[\bvec{\Theta}\times U,h]{{\cdot}}$ to conclude.
For $\term_b$, we use again Cauchy--Schwarz inequalities and the estimate \eqref{lem:lift.Theta.diff}, together with the definition of the  norm on $\uvec{\Theta}_{h,0}^0\times\underline{U}_{h,0}^0$, to write
\begin{equation}\label{est:term_b}
|\term_b|\lesssim \norm[L^2(\Omega)^2]{\bvec{\gamma}} h \beta_0^{-\frac12}\norm[\bvec{\Theta}\times U,h]{(\uvec{\eta}_h,\underline{v}_h)}.
\end{equation}
Finally, for $\term_c$, Cauchy--Schwarz inequalities followed by the estimate \eqref{lem:lift.U.diff} yield
\begin{equation}\label{est:term_c}
|\term_c|\lesssim \norm[L^2(\Omega)]{f}h\norm[\bvec{\Theta},h]{\uGh[0]\underline{v}_h}\lesssim
 \norm[L^2(\Omega)]{f}h\mu^{-\frac12}\norm[\bvec{\Theta}\times U,h]{(\uvec{\eta}_h,\underline{v}_h)}.
\end{equation}
Plugging \eqref{est:term_a}--\eqref{est:term_c} and the estimates \eqref{est:T1} on $\term$ and \eqref{est:Egrads} on $\mathcal{E}_{\GRADs,h}(\tens{C}\GRADs\bvec{\theta};\uvec{\eta}_h)$ into \eqref{eq:Eh.decomposition.k0}, we infer
\begin{multline*}
|\mathcal{E}_h((\bvec{\theta},u);(\uvec{\eta}_h,\underline{v}_h))|\\
\lesssim
h\left(\kappa^{-\frac12}t\seminorm[H^1(\Th)^2]{\bvec{\gamma}}+\beta_0^{-\frac12}\norm[L^2(\Omega)^2]{\bvec{\gamma}}
+\mu^{-\frac12}\norm[L^2(\Omega)]{f}
+\beta_0^{-\frac12}(\beta_0 + \beta_1)\seminorm[H^{2}(\Th)^2]{\bvec{\theta}}
\right)
\norm[\bvec{\Theta}\times U,h]{(\uvec{\eta}_h,\underline{v}_h)}.
\end{multline*}
Plugging this estimate into \eqref{eq:basic.err.est} concludes the proof.
\end{proof}

To conclude this section, we provide the proofs of the properties of the liftings.

\begin{proof}[Proof of Lemma \ref{lem:lift.U}]
  1. \emph{Proof of \eqref{lem:lift.U.proj}.}
  Let $\underline{z}_T\in\underline{U}_T^0$. For all $\bvec{\eta}\in \vPoly{0}(T)$, an integration by parts yields
  \[
  \int_T \GRAD\widetilde{z}_T\cdot\bvec{\eta}
  =\sum_{E\in\ET}\int_E \widetilde{z}_T (\bvec{\eta}\cdot\normal_{TE})
  =\sum_{E\in\ET}\int_E z_{\ET}(\bvec{\eta}\cdot\normal_{TE})
  =\int_T\GT[0]\underline{z}_T\cdot\bvec{\eta},
  \]
  where the second equality comes from the definition of $\widetilde{z}_T$ which ensures that $(\widetilde{z}_T)_{|E}=(z_{\ET})_{|E}$ for all $E\in\ET$ (both functions are linear on $E$ and match at the edge's vertices), and the last equality is obtained applying the definition \eqref{eq:def.GT} of $\GT[0]$. This proves \eqref{lem:lift.U.proj}.
  \medskip\\
  2. \emph{Proof of \eqref{lem:lift.U.norm}.}
  For any two vertices $V,V^\star$ of $\ST$ we have by construction
\[
\widetilde{z}_T(\bvec{x}_V)-\widetilde{z}_T(\bvec{x}_{V^\star})=\sum_{W,Z\in\VT}\lambda_{V,W}\lambda_{V^\star,Z}\left(
z_{\ET}(\bvec{x}_{W})-z_{\ET}(\bvec{x}_{Z})
\right).
\]
Integrating the derivative (oriented by each tangent $\tangent_E$) of $z_{\ET}$ on $\partial T$ between $W$ and $Z$ and using the two-dimensional version of the equivalence stated in \cite[Eq.~(4.24)]{Di-Pietro.Droniou:21} between $\norm[\bvec{\Theta},T]{{\cdot}}$ and the component $L^2$-norm, we have
\[
|z_{\ET}(\bvec{x}_{W})-z_{\ET}(\bvec{x}_{Z})|\le \norm[L^1(\partial T)]{z_{\ET}'}\le |\partial T|^{\frac12}\,\norm[L^2(\partial T)]{z_{\ET}'}
\lesssim \norm[\bvec{\Theta},T]{\uGT[0]\underline{z}_T}.
\]
Recalling that $\card(\VT)$ is uniformly bounded by the mesh regularity parameter, and that $(\lambda_{V,W})_{W\in\VT}$ and $(\lambda_{V^\star,Z})_{Z\in\VT}$ are coefficients of convex combinations, we infer from the above relations that
\[
|\widetilde{z}_T(\bvec{x}_V)-\widetilde{z}_T(\bvec{x}_{V^\star})|\lesssim \norm[\bvec{\Theta},T]{\uGT[0]\underline{z}_T}.
\]
Since any edge $e$ of $\ST$ has a length comparable to $h_T$, this shows that $|\GRAD \widetilde{z}_T\cdot\tangent_e|\lesssim h_T^{-1} \norm[\bvec{\Theta},T]{\uGT[0]\underline{z}_T}$ where $\tangent_e$ is any unit tangent to $e$.
Hence, on any triangle $\tau\in\ST$, 
\[
\norm[L^2(\tau)^2]{\GRAD \widetilde{z}_T}
= |\tau|^{\frac12}|(\GRAD\widetilde{z}_T)_{|\tau}|
\le |T|^{\frac12}|(\GRAD\widetilde{z}_T)_{|\tau}|
\lesssim |T|^{\frac12}h_T^{-1} \norm[\bvec{\Theta},T]{\uGT[0]\underline{z}_T}.
\]
Using $|T|^{\frac12}\lesssim h_T$, squaring, summing over $\tau\in \ST$ and taking the square root concludes the proof of \eqref{lem:lift.U.norm}.
\medskip\\
3. \emph{Proof of \eqref{lem:lift.U.diff}.}
We start from the following Poincar\'e inequality with trace:
\begin{equation}\label{eq:poincare.trace}
\norm[L^2(T)]{w_T}^2\lesssim h_T^2\norm[L^2(T)^2]{\GRAD w_T}^2+\sum_{E\in\ET}h_E\norm[L^2(E)]{w_T}^2\qquad\forall w_T\in \Poly[\rm c]{1}(\ST).
\end{equation}
To prove this estimate, consider a triangle $\tau\in \ST$ with an edge $e\subset \partial T$. Taking $\bvec{x}\in \tau$ and $\bvec{y}\in e$ we have $|w_T(\bvec{x})|\le h_T|(\GRAD w_T)_{|\tau}|+|w_T(\bvec{y})|$; integrating over $\bvec{y}\in e$, squaring, integrating over $\bvec{x}\in \tau$ and using $|\tau|/h_e\lesssim h_e$ (by shape regularity) leads to 
\begin{equation}\label{eq:poincare.tau}
\norm[L^2(\tau)]{w_T}^2\lesssim h_T^2\norm[L^2(\tau)^2]{\GRAD w_T}^2+h_e\norm[L^2(e)]{w_T}^2.
\end{equation}
If all triangles in $\ST$ have an edge $e\subset \partial T$, summing \eqref{eq:poincare.tau} over $\tau\in \ST$ concludes the proof of \eqref{eq:poincare.trace}; otherwise, a discrete trace inequality and \eqref{eq:poincare.tau} give a bound on the trace of $w_T$ on the other edges of $\tau$, and the process can be iterated on the triangles in $\ST$ that touch $\tau$ but do not have an edge on $\partial T$.

Applying \eqref{eq:poincare.trace} to $w_T=\widetilde{z}_T-\PUT[1]\underline{z}_T$, using a triangle inequality, $h_E\le h_T$, the estimate \eqref{lem:lift.U.norm}, and the fact that $(\widetilde{z}_T)_{|E}=(z_{\ET})_{|E}$ for all $E\in\ET$, we obtain
\[
\norm[L^2(T)]{\widetilde{z}_T-\PUT[1]\underline{z}_T}^2\lesssim h_T^2\left(\norm[\bvec{\Theta},T]{\uGT[0]\underline{z}_T}^2+\norm[L^2(T)^2]{\GRAD \PUT[1]\underline{z}_T}^2\right)+h_T^2\!\sum_{E\in\ET}\!h_E^{-1}\norm[L^2(E)]{z_{\ET}-\PUT[1]\underline{z}_T}^2.
\]
The proof of \eqref{lem:lift.U.diff} is completed by invoking \cite[Lemma 35 and Eq.~(4.24)]{Di-Pietro.Droniou:21} to write
\[
\norm[L^2(T)^2]{\GRAD \PUT[1]\underline{z}_T}^2+\sum_{E\in\ET}h_E^{-1}\norm[L^2(E)]{z_{\ET}-\PUT[1]\underline{z}_T}^2\lesssim
\norm[\bvec{\Theta},T]{\uGT[0]\underline{z}_T}^2.
\qedhere
\]
\end{proof}

\begin{proof}[Proof of Lemma \ref{lem:lift.Theta}]
  1. \emph{Proof of \eqref{lem:lift.Theta.proj}.}
  By \eqref{lem:lift.U.proj}, 
  $
  \vlproj{0}{T}\tvec{\eta}_T
  = \GT[0]\underline{w}_T+\PT[0]\uvec{\kappa}_T
  = \PT[0](\uGT[0]\underline{w}_T+\uvec{\kappa}_T),
  $
  where the last equality follows from the relation $\PT[0]\uGT[0]=\GT[0]$, see \cite[Eq.~(3.22)]{Di-Pietro.Droniou:21}.
  This proves that $\vlproj{0}{T}\tvec{\eta}_T=\PT[0]\uvec{\eta}_T^\flat$. Since $\PT[0]$ depends only on the tangential components of $\uvec{\eta}_T$ (see \eqref{eq:def.RT} and \eqref{eq:def.PT}), this concludes the proof of \eqref{lem:lift.Theta.proj}.
  \medskip\\
  2. \emph{Proof of \eqref{lem:lift.Theta.norm}.}
  We use the definition of $\tvec{\eta}_T$ to write $\GRAD\widetilde{v}_T-\tvec{\eta}_T=\GRAD(\widetilde{v}_T-\widetilde{w}_T)-\PT[0]\uvec{\kappa}_T$ and thus, by triangle inequality,
  \begin{align*}
    \norm[L^2(T)^2]{\GRAD\widetilde{v}_T-\tvec{\eta}_T}^2\le{}&2\norm[L^2(T)^2]{\GRAD(\widetilde{v}_T-\widetilde{w}_T)}^2+2\norm[L^2(T)^2]{\PT[0]\uvec{\kappa}_T}^2\\
    \lesssim{}& \norm[\bvec{\Theta},T]{\uGT[0](\underline{v}_T-\underline{w}_T)}^2+\norm[\bvec{\Theta},T]{\uvec{\kappa}_T}^2\\
    ={}&\norm[\bvec{\Theta},T]{\uGT[0](\underline{v}_T-\underline{w}_T)-\uvec{\kappa}_T}^2,
  \end{align*}
  where the second line follows from \eqref{lem:lift.U.norm} applied to $\underline{z}_T=\underline{v}_T-\underline{w}_T$ and the estimate $\norm[L^2(T)^2]{\PT[0]\uvec{\kappa}_T}\lesssim \norm[\bvec{\Theta},T]{\uvec{\kappa}_T}$ (see \cite[Proposition 27]{Di-Pietro.Droniou:21}), while the conclusion is obtained using the orthogonality for the $(\cdot,\cdot)_{\bvec{\Theta},T}$ product of $\uvec{\kappa}_T$ and $\uGT[0](\underline{v}_T-\underline{w}_T)$. This gives
  \[
  \norm[L^2(T)^2]{\GRAD\widetilde{v}_T-\tvec{\eta}_T}\lesssim \norm[\bvec{\Theta},T]{\uGT[0]\underline{v}_T-\uvec{\eta}_T^\flat}
  \]
  and the proof of \eqref{lem:lift.Theta.norm} is complete since $\norm[\bvec{\Theta},T]{{\cdot}}$ depends only on tangential components of vectors in $\uvec{\Theta}_T^0$.
  \medskip\\
  3. \emph{Proof of \eqref{lem:lift.Theta.diff}.}
  Let $\phi_T(\bvec{x})\coloneq\PT[0]\uvec{\eta}_T\cdot(\bvec{x}-\bvec{x}_T)\in\Poly{1}(T)$.
  By \eqref{eq:tilde.linearly.exact}, we have $\PT[0]\uvec{\eta}_T=\GRAD\phi_T=\GRAD\widetilde{\Idisz{T}\phi_T}$, and thus \eqref{lem:lift.Theta.norm} with $\underline{v}_T=\Idisz{T}\phi_T$ yields
  \begin{align*}
    \norm[L^2(T)^2]{\PT[0]\uvec{\eta}_T-\tvec{\eta}_T}^2
    &\lesssim \norm[\bvec{\Theta},T]{\uGT[0](\Idisz{T}\phi_T)-\uvec{\eta}_T}^2
    \\
    &\lesssim \sum_{E\in\ET}h_E\norm[L^2(E)]{\PT[0]\uvec{\eta}_T\cdot\tangent_E-\bvec{\eta}_E\cdot\tangent_E}^2
    \lesssim h_T^2\sum_{E\in\ET}h_E^{-1}\norm[L^2(E)]{\PT[0]\uvec{\eta}_T-\bvec{\eta}_E}^2,
  \end{align*}
  where the second inequality follows from the two-dimensional version of the norm equivalence \cite[Eq.~(4.24)]{Di-Pietro.Droniou:21} together with
  the local version of \eqref{eq:tIrot:P2} which gives $\uGT[0](\Idisz{T}\phi_T)=\tIrotz{T}(\GRAD \phi_T)=\tIrotz{T}(\PT[0]\uvec{\eta}_T)$.
  The estimate \eqref{lem:lift.Theta.diff} follows writing $h_T\le h$, summing over $T\in\Th$, and invoking \cite[Eq.~(7.103) and (7.109)]{Di-Pietro.Droniou:20} to see that
  \[
  \sum_{T\in\Th}\sum_{E\in\ET}h_E^{-1}\norm[L^2(E)^2]{\PT[0]\uvec{\eta}_T-\bvec{\eta}_E}^2
  \lesssim\norm[L^2(\Omega)^{2\times 2}]{\Gsh[0]\uvec{\eta}_h}^2+\seminorm[{\rm s,j},h]{\uvec{\eta}_h}^2.
  \qedhere\]
\end{proof}


\section{Numerical results}\label{sec:numerical.results}

We illustrate the practical behaviour of the DDR scheme \eqref{eq:discrete} through three different numerical tests. In Sections \ref{sec:test.polynomial} and \ref{sec:test.analytical}, we assess the convergence towards two different analytical solutions of the Reissner--Mindlin problem with clamped boundary conditions and three families of meshes of $\Omega=(0,1)^2$: (mostly) hexagonal meshes, triangular meshes, and locally refined meshes (with hanging nodes); Figure \ref{fig:meshes} shows a representative member of each family of meshes.
We focus on the $h$-convergence for polynomial degrees $0\le k\le 3$, check the convergence rates, and discuss the robustness of the scheme with respect to the thickness $t$ of the plate. 

In Section \ref{sec:tests.kirchoff}, we consider the limiting case of the Kirchoff--Love model ($t\to 0$ in \eqref{eq:rm.strong}) and assess how the lowest-order DDR scheme \eqref{eq:discrete} behaves when using simply supported boundary conditions; we also compare it with a stabilised finite element method.

The DDR tools and the scheme have been implemented in the \texttt{HArDCore2D} C++ framework (see \url{https://github.com/jdroniou/HArDCore}), which is based on linear algebra facilities from the \texttt{Eigen3} library (see \url{http://eigen.tuxfamily.org}).
The resolution of the global sparse linear systems uses the direct solver from the \texttt{Intel MKL PARDISO} library (see \url{https://software.intel.com/en-us/mkl}).

In all the tests, the Young modulus is taken as $E=1$, while the Poisson ratio is $\nu=0.3$. The energy error is computed as the (relative) $\norm[\bvec{\Theta}\times U,h]{{\cdot}}$-norm of the difference between the approximate solution and the interpolate of the exact solution, that is:
\begin{equation}\label{eq:def.error}
E_h\coloneq \frac{\norm[\bvec{\Theta}\times U,h]{(\uvec{\theta}_h-\Irot{h}\bvec{\theta},\underline{u}_h-\Idis{h}u)}}{\norm[\bvec{\Theta}\times U,h]{(\Irot{h}\bvec{\theta},\Idis{h}u)}}.
\end{equation}

\begin{figure}\centering
  \begin{minipage}{0.3\textwidth}\centering
    \includegraphics[height=4cm]{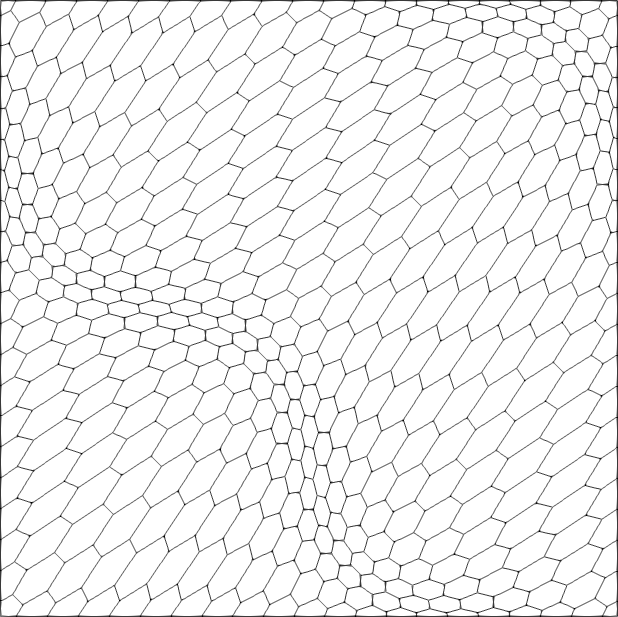}
    \subcaption{Hexagonal mesh\label{fig:hexa}}
  \end{minipage}
  \hspace{0.25cm}
  \begin{minipage}{0.3\textwidth}\centering
    \includegraphics[height=4cm]{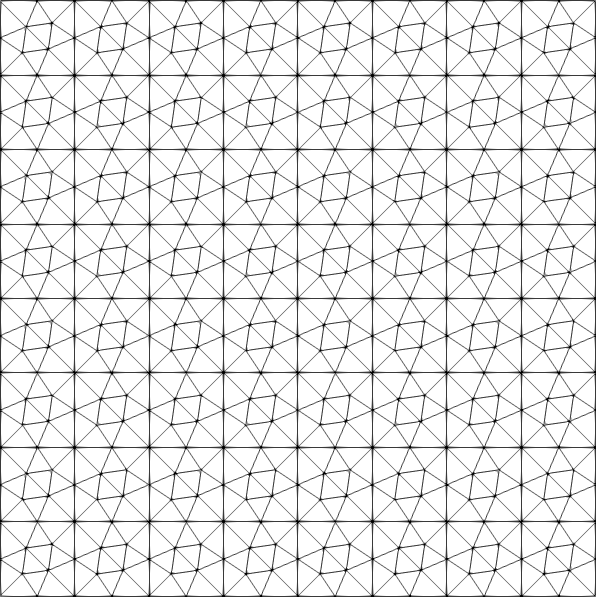}
    \subcaption{Triangular mesh}
  \end{minipage}
  \vspace{0.25cm}
  \begin{minipage}{0.3\textwidth}\centering
    \includegraphics[height=4cm]{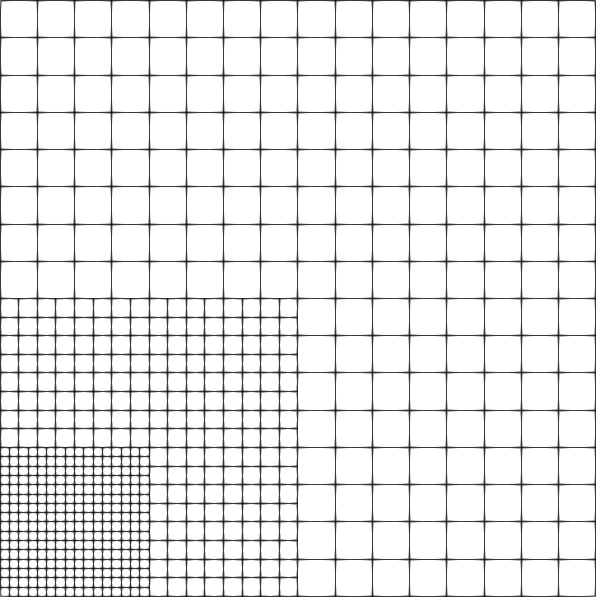}
    \subcaption{Locally refined mesh}
  \end{minipage}
  \caption{Members of mesh families used in numerical tests.\label{fig:meshes}}
\end{figure}


\subsection{Polynomial solution}\label{sec:test.polynomial}

The first series of tests is run with source term corresponding to the following exact polynomial solution introduced in \cite{Chinosi.Lovadina.ea:06}:
\begin{align*}
u(\bvec{x})={}&\frac13 x_1^3(1-x_1^3)x_2^3(1-x_2)^3\\
&-\frac{2t^2}{5(1-\nu)}\left[x_2^3(x_2-1)^3x_1(x_1-1)(5x_1^2-5x_1+1)+x_1^3(x_1-1)^3x_2(x_2-1)(5x_2^2-5x_2+1)\right],\\
\bvec{\theta}(\bvec{x})={}&\begin{bmatrix}
  x_2^3(x_2-1)^3x_1^2(x_1-1)^2(2x_1-1) \\[2pt]
  x_1^3(x_1-1)^3x_2^2(x_2-1)^2(3x_2-1)
\end{bmatrix}.
\end{align*}

The results are presented in Figure \ref{fig:conv.polynomial}. We notice that, for all considered polynomial degrees $k\in \{0,1,2,3\}$ and thicknesses $t\in\{10^{-1},10^{-3}\}$, the error decays as $h^{k+1}$ (as expected from Theorem \ref{th:error.est}) and is mostly independent of $t$. The same observation can be made for $k\in\{0,1\}$ and $t=10^{-5}$. However, for $k\ge 2$, we notice an apparent loss of convergence on the finest meshes when $t=10^{-5}$.
This phenomenon is not a manifestation of lack of robustness of the scheme as, for the considered solution, the $H^s$-norms of the variables (displacement, rotation, shear strain) remain uniformly bounded with respect to $t$, and Theorem \ref{th:error.est} thus shows that we should expect a convergence in $\mathcal O(h^{k+1})$ with multiplicative constants that are independent of $t$.
This apparent loss of convergence occurs when the condition number of the system matrix becomes very large ($10^{17}$ or more), and corresponds to extreme values of the thickness that are not encountered in practical applications (the value $t=10^{-5}$ corresponds to a plate with characteristic height/length of $1\,{\rm m}$ and thickness of $10\,\mu{\rm m}$).
In these situations, despite the use of a direct solver, the residual becomes large ($\simeq 10^{-5}$) owing to the propagation of round-off errors, leading to a poorly resolved linear system.
A similar phenomenon had been noticed in \cite{Holzer.Rank.ea:90}.

We notice that the tests we present here are among the few on high-order schemes for the Reissner--Mindlin plate model. 
In \cite{Beirao-da-Veiga.Hughes.ea:15}, isogeometric schemes are considered up to a polynomial degree 5, corres\-ponding to a convergence rate in $h^4$, and thus to the choice $k=3$ in the DDR scheme. The smallest thickness considered in this reference is $t=10^{-3}$, and the
largest mesh has about 300 rectangles; at these levels, no rounding error is noticeable in our tests (to compare, the second locally refined mesh we consider has more than 600 elements, and the largest one has more than 10,000 elements).
An over-penalised discontinuous Galerkin scheme is presented in \cite{Bosing.Carstensen:15}, and tests are produced with a
polynomial degree 4 (convergence rate in $h^3$), corresponding to $k=2$ for the DDR scheme. Very thin plates are considered in these
tests, with $t$ as small as $10^{-6}$; however, for this thickness, the largest triangular mesh in the tests of \cite{Bosing.Carstensen:15} has 512 triangles; our second coarsest triangular
mesh has 896 triangles and, as can be seen in Figure \ref{fig:poly.tri}, for $k=2$ and at this size of mesh the convergence is not affected by the increase in the condition number of the matrix.
It therefore seems that those previous tests were carried out under conditions in which the conditioning of the matrix remains reasonable, and that the tests we present here are the first ones to highlight this phenomenon for high-order schemes and very thin plates.

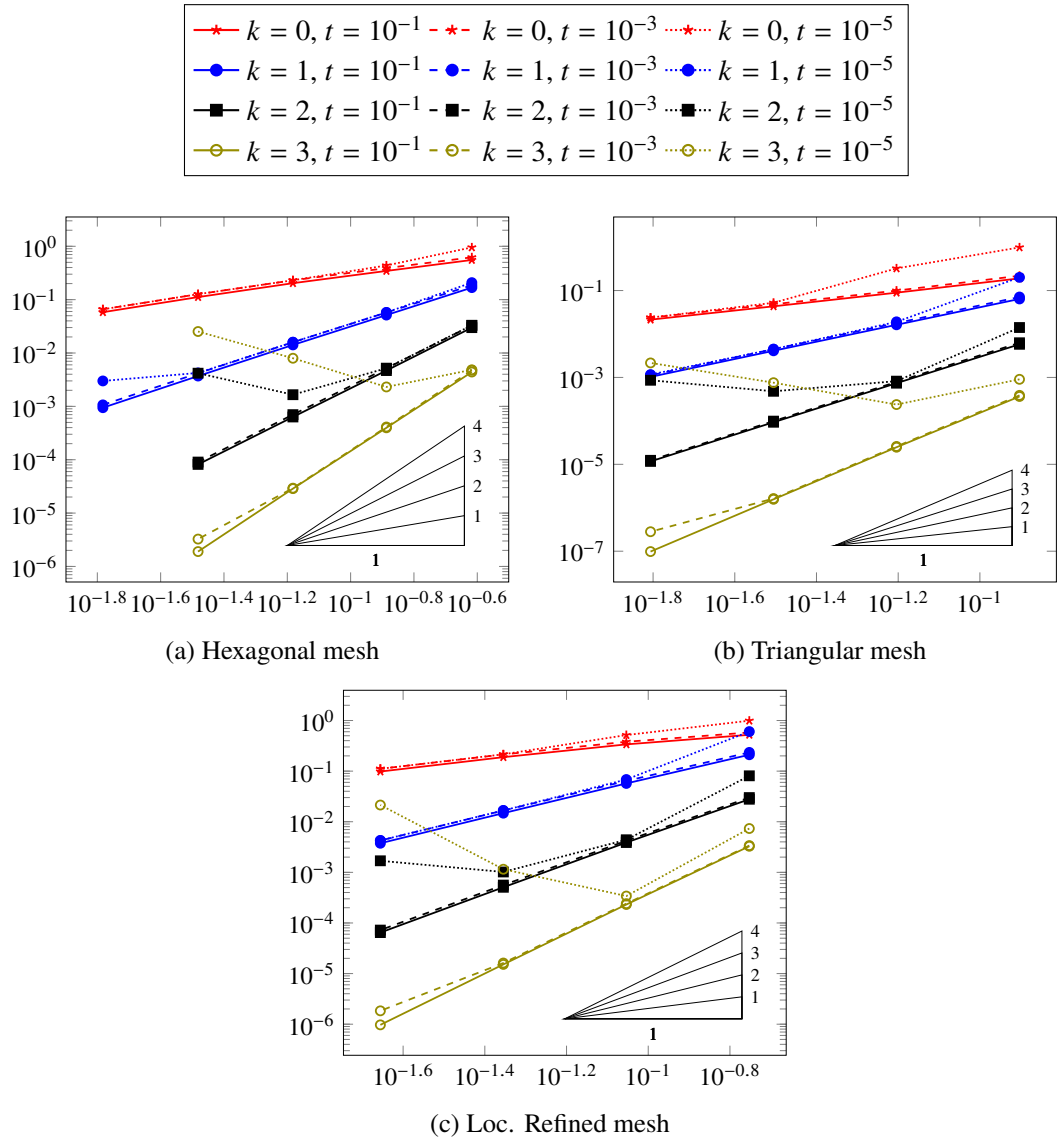
\begin{figure}\centering
  \ref{conv.hexa}
  \vspace{0.50cm}\\
  \begin{minipage}{0.45\textwidth}
    \begin{tikzpicture}[scale=0.85]
      \begin{loglogaxis} 
        \addplot [thick, mark=star, red] table[x=MeshSize,y=Error] {outputs/polynomial_solution/hexa_k0_t1e-1/data_rates.dat};
        \logLogSlopeTriangle{0.90}{0.4}{0.1}{1}{black};
        \addplot [thick, mark=star, mark options=solid, red, dashed] table[x=MeshSize,y=Error] {outputs/polynomial_solution/hexa_k0_t1e-3/data_rates.dat};
        \addplot [thick, mark=star, mark options=solid, red, densely dotted] table[x=MeshSize,y=Error] {outputs/polynomial_solution/hexa_k0_t1e-5/data_rates.dat};
        \addplot [thick, mark=*, blue] table[x=MeshSize,y=Error] {outputs/polynomial_solution/hexa_k1_t1e-1/data_rates.dat};
        \logLogSlopeTriangle{0.90}{0.4}{0.1}{2}{black};
        \addplot [thick, mark=*, mark options=solid, blue, dashed] table[x=MeshSize,y=Error] {outputs/polynomial_solution/hexa_k1_t1e-3/data_rates.dat};
        \addplot [thick, mark=*, mark options=solid, blue, densely dotted] table[x=MeshSize,y=Error] {outputs/polynomial_solution/hexa_k1_t1e-5/data_rates.dat};
        \addplot [thick, mark=square*, black] table[x=MeshSize,y=Error] {outputs/polynomial_solution/hexa_k2_t1e-1/data_rates.dat};
        \logLogSlopeTriangle{0.90}{0.4}{0.1}{3}{black};
        \addplot [thick, mark=square*, mark options=solid, black, dashed] table[x=MeshSize,y=Error] {outputs/polynomial_solution/hexa_k2_t1e-3/data_rates.dat};
        \addplot [thick, mark=square*, mark options=solid, black, densely dotted] table[x=MeshSize,y=Error] {outputs/polynomial_solution/hexa_k2_t1e-5/data_rates.dat};
        \addplot [thick, mark=o, olive] table[x=MeshSize,y=Error] {outputs/polynomial_solution/hexa_k3_t1e-1/data_rates.dat};
        \logLogSlopeTriangle{0.90}{0.4}{0.1}{4}{black};
        \addplot [thick, mark=o, mark options=solid, olive, dashed] table[x=MeshSize,y=Error] {outputs/polynomial_solution/hexa_k3_t1e-3/data_rates.dat};
        \addplot [thick, mark=o, mark options=solid, olive, densely dotted] table[x=MeshSize,y=Error] {outputs/polynomial_solution/hexa_k3_t1e-5/data_rates.dat};
      \end{loglogaxis}            
    \end{tikzpicture}
    \subcaption{Hexagonal mesh}
  \end{minipage}
  \begin{minipage}{0.45\textwidth}
    \begin{tikzpicture}[scale=0.85]
      \begin{loglogaxis}
        \addplot [thick, mark=star, red] table[x=MeshSize,y=Error] {outputs/polynomial_solution/tri_k0_t1e-1/data_rates.dat};
        \logLogSlopeTriangle{0.90}{0.4}{0.1}{1}{black};
        \addplot [thick, mark=star, mark options=solid, red, dashed] table[x=MeshSize,y=Error] {outputs/polynomial_solution/tri_k0_t1e-3/data_rates.dat};
        \addplot [thick, mark=star, mark options=solid, red, densely dotted] table[x=MeshSize,y=Error] {outputs/polynomial_solution/tri_k0_t1e-5/data_rates.dat};
        \addplot [thick, mark=*, blue] table[x=MeshSize,y=Error] {outputs/polynomial_solution/tri_k1_t1e-1/data_rates.dat};
        \logLogSlopeTriangle{0.90}{0.4}{0.1}{2}{black};
        \addplot [thick, mark=*, mark options=solid, blue, dashed] table[x=MeshSize,y=Error] {outputs/polynomial_solution/tri_k1_t1e-3/data_rates.dat};
        \addplot [thick, mark=*, mark options=solid, blue, densely dotted] table[x=MeshSize,y=Error] {outputs/polynomial_solution/tri_k1_t1e-5/data_rates.dat};
        \addplot [thick, mark=square*, black] table[x=MeshSize,y=Error] {outputs/polynomial_solution/tri_k2_t1e-1/data_rates.dat};
        \logLogSlopeTriangle{0.90}{0.4}{0.1}{3}{black};
        \addplot [thick, mark=square*, mark options=solid, black, dashed] table[x=MeshSize,y=Error] {outputs/polynomial_solution/tri_k2_t1e-3/data_rates.dat};
        \addplot [thick, mark=square*, mark options=solid, black, densely dotted] table[x=MeshSize,y=Error] {outputs/polynomial_solution/tri_k2_t1e-5/data_rates.dat};
        \addplot [thick, mark=o, olive] table[x=MeshSize,y=Error] {outputs/polynomial_solution/tri_k3_t1e-1/data_rates.dat};
        \logLogSlopeTriangle{0.90}{0.4}{0.1}{4}{black};
        \addplot [thick, mark=o, mark options=solid, olive, dashed] table[x=MeshSize,y=Error] {outputs/polynomial_solution/tri_k3_t1e-3/data_rates.dat};
        \addplot [thick, mark=o, mark options=solid, olive, densely dotted] table[x=MeshSize,y=Error] {outputs/polynomial_solution/tri_k3_t1e-5/data_rates.dat};
      \end{loglogaxis}            
    \end{tikzpicture}
    \subcaption{Triangular mesh}\label{fig:poly.tri}
  \end{minipage}
  \vspace{0.25cm}\\
  \begin{minipage}{0.45\textwidth}
    \begin{tikzpicture}[scale=0.85]
      \begin{loglogaxis}[legend columns=3, legend to name=conv.hexa]       
        \addplot [thick, mark=star, red] table[x=MeshSize,y=Error] {outputs/polynomial_solution/locref_k0_t1e-1/data_rates.dat};
        \addlegendentry{$k=0$, $t=10^{-1}$}
        \logLogSlopeTriangle{0.90}{0.4}{0.1}{1}{black};
        \addplot [thick, mark=star, mark options=solid, red, dashed] table[x=MeshSize,y=Error] {outputs/polynomial_solution/locref_k0_t1e-3/data_rates.dat};
        \addlegendentry{$k=0$, $t=10^{-3}$}
        \addplot [thick, mark=star, mark options=solid, red, densely dotted] table[x=MeshSize,y=Error] {outputs/polynomial_solution/locref_k0_t1e-5/data_rates.dat};
        \addlegendentry{$k=0$, $t=10^{-5}$}
        \addplot [thick, mark=*, blue] table[x=MeshSize,y=Error] {outputs/polynomial_solution/locref_k1_t1e-1/data_rates.dat};
        \addlegendentry{$k=1$, $t=10^{-1}$}
        \logLogSlopeTriangle{0.90}{0.4}{0.1}{2}{black};
        \addplot [thick, mark=*, mark options=solid, blue, dashed] table[x=MeshSize,y=Error] {outputs/polynomial_solution/locref_k1_t1e-3/data_rates.dat};
        \addlegendentry{$k=1$, $t=10^{-3}$}
        \addplot [thick, mark=*, mark options=solid, blue, densely dotted] table[x=MeshSize,y=Error] {outputs/polynomial_solution/locref_k1_t1e-5/data_rates.dat};
        \addlegendentry{$k=1$, $t=10^{-5}$}
        \addplot [thick, mark=square*, black] table[x=MeshSize,y=Error] {outputs/polynomial_solution/locref_k2_t1e-1/data_rates.dat};
        \addlegendentry{$k=2$, $t=10^{-1}$}
        \logLogSlopeTriangle{0.90}{0.4}{0.1}{3}{black};
        \addplot [thick, mark=square*, mark options=solid, black, dashed] table[x=MeshSize,y=Error] {outputs/polynomial_solution/locref_k2_t1e-3/data_rates.dat};
        \addlegendentry{$k=2$, $t=10^{-3}$}
        \addplot [thick, mark=square*, mark options=solid, black, densely dotted] table[x=MeshSize,y=Error] {outputs/polynomial_solution/locref_k2_t1e-5/data_rates.dat};
        \addlegendentry{$k=2$, $t=10^{-5}$}
        \addplot [thick, mark=o, olive] table[x=MeshSize,y=Error] {outputs/polynomial_solution/locref_k3_t1e-1/data_rates.dat};
        \logLogSlopeTriangle{0.90}{0.4}{0.1}{4}{black};
        \addlegendentry{$k=3$, $t=10^{-1}$}
        \addplot [thick, mark=o, mark options=solid, olive, dashed] table[x=MeshSize,y=Error] {outputs/polynomial_solution/locref_k3_t1e-3/data_rates.dat};
        \addlegendentry{$k=3$, $t=10^{-3}$}
        \addplot [thick, mark=o, mark options=solid, olive, densely dotted] table[x=MeshSize,y=Error] {outputs/polynomial_solution/locref_k3_t1e-5/data_rates.dat};
        \addlegendentry{$k=3$, $t=10^{-5}$}
      \end{loglogaxis}            
    \end{tikzpicture}
    \subcaption{Loc. Refined mesh}
  \end{minipage}
  \caption{Error $E_h$ (see \eqref{eq:def.error}) with respect to the meshsize $h$ for the polynomial solution of Section \ref{sec:test.polynomial}.\label{fig:conv.polynomial}}
\end{figure}

\subsection{Analytical solution with improved physical behaviour}\label{sec:test.analytical}

As explained in \cite[Theorem 2.1 and following remarks]{Arnold.Falk:89}, as $t\to 0$ the shear strain $\bvec{\gamma}$ is expected to remain bounded in $L^2$-norm, but to grow unboundedly in $H^1$-norm. The polynomial solution considered in Section \ref{sec:test.polynomial} does not reproduce this behaviour (for this solution, the shear strain is actually independent of $t$). To test our DDR scheme in a setting which is at least quantitatively closer to the generic physical behaviour of the Reissner--Mindlin model, we design in this section a new analytical solution on $\Omega=(0,1)^2$ (with non-homogeneous boundary conditions), with the following behaviour as $t\to 0$:
\begin{equation}\label{eq:analytical.behaviour}
  \begin{gathered}
    \text{
      $\norm[H^3(\Omega)]{u}\sim 1$,
      $\norm[H^2(\Omega)^2]{\bvec{\theta}}\sim 1$,
      $\norm[L^2(\Omega)^2]{\bvec{\gamma}}\sim 1$,
      $\seminorm[H^s(\Omega)^2]{\bvec{\gamma}}\sim t^{-s+\frac12}$
      for all $s\ge 1$},\\
    \text{and $f$ is independent of $t$}.
  \end{gathered}
\end{equation}
As noticed in \cite[Theorem 2.1]{Arnold.Falk:89}, the expected growth of $\seminorm[H^1(\Omega)]{\bvec{\gamma}}$ is in $t^{-1}$, not $t^{-\frac12}$ as in the solution we construct here. This solution could easily be adjusted to produce such a growth, but this would come at the cost of extremely steep dependency on $t$ (in particular, a term $t^6$ in \eqref{eq:def.u.theta} below) that would make the resolution even more challenging. 

\subsubsection{Design of the solution}

We look for a solution under the form
\begin{equation}\label{eq:def.u.theta}
  \begin{gathered}
    \text{
      $u(\bvec{x})=v(t,\bvec{x})+t^2w(t,\bvec{x})$
      and
      $\bvec{\theta}(\bvec{x})=\GRAD v(t,\bvec{x})$,
    }\\
    \text{where $v(t,\bvec{x})=t^3 V(t^{-1}\bvec{x}) + g(\bvec{x})$
      with $V(\bvec{y})=y_1e^{-y_1}\cos(y_2)$
      and $g(\bvec{x})=\sin(\pi x_1)\sin(\pi x_2)$.
    }
  \end{gathered}
\end{equation}
Defining $\bvec{\gamma}$ by \eqref{eq:rm.def.gamma} gives $\bvec{\gamma}(t,\cdot)=\kappa \GRAD w(t,\cdot)$. The function $w$ is then selected to ensure that \eqref{eq:rm.balance1} holds.
Since $\vDIV (\tens{C}\GRADs\bvec{\theta})=\vDIV(\tens{C}\GRADs(\GRAD v))=(\beta_0+\beta_1)\GRAD\Delta v$, \eqref{eq:rm.balance1} corresponds to
\[
w(t,\cdot)=-\frac{\beta_0+\beta_1}{\kappa}\Delta v(t,\cdot).
\]
The transverse load $f$ is fixed according to \eqref{eq:rm.balance2}:
\[
f(t,\cdot)=(\beta_0+\beta_1)\Delta^2 v(t,\cdot).
\]
Let us now briefly check that \eqref{eq:analytical.behaviour} holds. We first notice that, for any natural numbers $m,n$, the mapping $(0,\infty)\times \Omega\ni(t,\bvec{x})\mapsto (\partial_1^m\partial_2^nV)(t^{-1}\bvec{x})$ is uniformly bounded. This shows that $\norm[H^3(\Omega)]{u}$, $\norm[H^2(\Omega)^2]{\bvec{\theta}}$ and $\norm[L^2(\Omega)^2]{\bvec{\gamma}}$ remain bounded as $t\to 0$; these norms also do not go to zero owing to the presence of $g$ (which could actually be any smooth function with non-zero derivatives up to order 3). The function $V$ satisfies $\Delta V(\bvec{y})=-2e^{-y_1}\cos(y_2)$ and thus $\Delta^2V=0$; hence, $f=(\beta_0+\beta_1)\Delta^2 g$ is independent of $t$. This also shows that
\[
\bvec{\gamma}(\bvec{x})=-2(\beta_0+\beta_1)e^{-t^{-1}y_1}\left[\begin{array}{c}\cos(t^{-1}y_2)\\ \sin(t^{-1}y_2) \end{array}\right]-(\beta_0+\beta_1)\GRAD\Delta g(\bvec{x}).
\]
For a given $s\ge 1$, taking any partial derivative of order $s$ of this expression and using $\norm[L^2(0,1)]{e^{-t^{-1}\bullet}}\sim t^{1/2}$ shows that $\seminorm[H^s(\Omega)^2]{\bvec{\gamma}}\sim t^{-s+\frac12}$.

\subsubsection{Results}

The results of the numerical tests with the analytical solution \eqref{eq:def.u.theta} are presented in Figure \ref{fig:conv.analytical}. We observe a similar behaviour as in the numerical results for the polynomial solution
(see Figure \ref{fig:conv.polynomial}). The scheme is here completely robust for $k=0$ (as expected from Theorem \ref{th:error.est.k0}), and for $k=1$ up to $t=10^{-3}$ (and also for $t=10^{-5}$ up to errors of magnitude $10^{-3}$). The degradation of convergence occurs however sooner, with respect to increasing $k$ or $1/t$, than in Section \ref{sec:test.polynomial}: the apparent loss of convergence is here already perceptible for $(k,t)=(1,10^{-5})$ or $(k,t)=(3,10^{-3})$ for example; it also seems more severe for $t=10^{-5}$ and $k\ge 1$.

This is not completely unexpected as the dependency of the analytical solution \eqref{eq:def.u.theta} with respect to $t$ is more severe, and the higher-order norms of the shear strain indeed grows with $t$ here. Combined with the round-off errors phenomenon previously mentioned, this explains the worse numerical behaviour. We however notice that the scheme remains more robust, even for higher degrees, than what the error estimate \eqref{eq:err.est} could lead us to believe; considering for example $k=3$, since $\seminorm[H^{4}(\Th)]{\bvec{\gamma}}$ grows as $t^{-3.5}$, the upper bound on the error in \eqref{eq:err.est} grows between $t=10^{-1}$ and $t=10^{-3}$ by a factor $10^{2\times 3.5}=10^7$, which is clearly not the case of the error itself (on the finest mesh, the ratio of the errors for these two values of $t$ is at most $10^3$).

\begin{figure}\centering
  \ref{conv.hexa.analytical}
  \vspace{0.50cm}\\
  \begin{minipage}{0.45\textwidth}
    \begin{tikzpicture}[scale=0.85]
      \begin{loglogaxis}    
        \addplot [thick, mark=star, red] table[x=MeshSize,y=Error] {outputs/analytical_solution/hexa_k0_t1e-1/data_rates.dat};
        \logLogSlopeTriangle{0.90}{0.4}{0.1}{1}{black};
        \addplot [thick, mark=star, mark options=solid, red, dashed] table[x=MeshSize,y=Error] {outputs/analytical_solution/hexa_k0_t1e-3/data_rates.dat};
        \addplot [thick, mark=star, mark options=solid, red, densely dotted] table[x=MeshSize,y=Error] {outputs/analytical_solution/hexa_k0_t1e-5/data_rates.dat};
        \addplot [thick, mark=*, blue] table[x=MeshSize,y=Error] {outputs/analytical_solution/hexa_k1_t1e-1/data_rates.dat};
        \logLogSlopeTriangle{0.90}{0.4}{0.1}{2}{black};
        \addplot [thick, mark=*, mark options=solid, blue, dashed] table[x=MeshSize,y=Error] {outputs/analytical_solution/hexa_k1_t1e-3/data_rates.dat};
        \addplot [thick, mark=*, mark options=solid, blue, densely dotted] table[x=MeshSize,y=Error] {outputs/analytical_solution/hexa_k1_t1e-5/data_rates.dat};
        \addplot [thick, mark=square*, black] table[x=MeshSize,y=Error] {outputs/analytical_solution/hexa_k2_t1e-1/data_rates.dat};
        \logLogSlopeTriangle{0.90}{0.4}{0.1}{3}{black};
        \addplot [thick, mark=square*, mark options=solid, black, dashed] table[x=MeshSize,y=Error] {outputs/analytical_solution/hexa_k2_t1e-3/data_rates.dat};
        \addplot [thick, mark=square*, mark options=solid, black, densely dotted] table[x=MeshSize,y=Error] {outputs/analytical_solution/hexa_k2_t1e-5/data_rates.dat};
        \addplot [thick, mark=o, olive] table[x=MeshSize,y=Error] {outputs/analytical_solution/hexa_k3_t1e-1/data_rates.dat};
        \logLogSlopeTriangle{0.90}{0.4}{0.1}{4}{black};
        \addplot [thick, mark=o, mark options=solid, olive, dashed] table[x=MeshSize,y=Error] {outputs/analytical_solution/hexa_k3_t1e-3/data_rates.dat};
        \addplot [thick, mark=o, mark options=solid, olive, densely dotted] table[x=MeshSize,y=Error] {outputs/analytical_solution/hexa_k3_t1e-5/data_rates.dat};
      \end{loglogaxis}            
    \end{tikzpicture}
    \subcaption{Hexagonal mesh}
  \end{minipage}
  \begin{minipage}{0.45\textwidth}
    \begin{tikzpicture}[scale=0.85]
      \begin{loglogaxis}
        \addplot [thick, mark=star, red] table[x=MeshSize,y=Error] {outputs/analytical_solution/tri_k0_t1e-1/data_rates.dat};
        \logLogSlopeTriangle{0.90}{0.4}{0.1}{1}{black};
        \addplot [thick, mark=star, mark options=solid, red, dashed] table[x=MeshSize,y=Error] {outputs/analytical_solution/tri_k0_t1e-3/data_rates.dat};
        \addplot [thick, mark=star, mark options=solid, red, densely dotted] table[x=MeshSize,y=Error] {outputs/analytical_solution/tri_k0_t1e-5/data_rates.dat};
        \addplot [thick, mark=*, blue] table[x=MeshSize,y=Error] {outputs/analytical_solution/tri_k1_t1e-1/data_rates.dat};
        \logLogSlopeTriangle{0.90}{0.4}{0.1}{2}{black};
        \addplot [thick, mark=*, mark options=solid, blue, dashed] table[x=MeshSize,y=Error] {outputs/analytical_solution/tri_k1_t1e-3/data_rates.dat};
        \addplot [thick, mark=*, mark options=solid, blue, densely dotted] table[x=MeshSize,y=Error] {outputs/analytical_solution/tri_k1_t1e-5/data_rates.dat};
        \addplot [thick, mark=square*, black] table[x=MeshSize,y=Error] {outputs/analytical_solution/tri_k2_t1e-1/data_rates.dat};
        \logLogSlopeTriangle{0.90}{0.4}{0.1}{3}{black};
        \addplot [thick, mark=square*, mark options=solid, black, dashed] table[x=MeshSize,y=Error] {outputs/analytical_solution/tri_k2_t1e-3/data_rates.dat};
        \addplot [thick, mark=square*, mark options=solid, black, densely dotted] table[x=MeshSize,y=Error] {outputs/analytical_solution/tri_k2_t1e-5/data_rates.dat};
        \addplot [thick, mark=o, olive] table[x=MeshSize,y=Error] {outputs/analytical_solution/tri_k3_t1e-1/data_rates.dat};
        \logLogSlopeTriangle{0.90}{0.4}{0.1}{4}{black};
        \addplot [thick, mark=o, mark options=solid, olive, dashed] table[x=MeshSize,y=Error] {outputs/analytical_solution/tri_k3_t1e-3/data_rates.dat};
        \addplot [thick, mark=o, mark options=solid, olive, densely dotted] table[x=MeshSize,y=Error] {outputs/analytical_solution/tri_k3_t1e-5/data_rates.dat};
      \end{loglogaxis}            
    \end{tikzpicture}
    \subcaption{Triangular mesh}
  \end{minipage}
  \vspace{0.25cm}\\
  \begin{minipage}{0.45\textwidth}
    \begin{tikzpicture}[scale=0.85]
      \begin{loglogaxis}[legend columns=3, legend to name=conv.hexa.analytical]  
        \addplot [thick, mark=star, red] table[x=MeshSize,y=Error] {outputs/analytical_solution/locref_k0_t1e-1/data_rates.dat};
        \addlegendentry{$k=0$, $t=10^{-1}$}
        \logLogSlopeTriangle{0.90}{0.4}{0.1}{1}{black};
        \addplot [thick, mark=star, mark options=solid, red, dashed] table[x=MeshSize,y=Error] {outputs/analytical_solution/locref_k0_t1e-3/data_rates.dat};
        \addlegendentry{$k=0$, $t=10^{-3}$}
        \addplot [thick, mark=star, mark options=solid, red, densely dotted] table[x=MeshSize,y=Error] {outputs/analytical_solution/locref_k0_t1e-5/data_rates.dat};
        \addlegendentry{$k=0$, $t=10^{-5}$}
        \addplot [thick, mark=*, blue] table[x=MeshSize,y=Error] {outputs/analytical_solution/locref_k1_t1e-1/data_rates.dat};
        \addlegendentry{$k=1$, $t=10^{-1}$}
        \logLogSlopeTriangle{0.90}{0.4}{0.1}{2}{black};
        \addplot [thick, mark=*, mark options=solid, blue, dashed] table[x=MeshSize,y=Error] {outputs/analytical_solution/locref_k1_t1e-3/data_rates.dat};
        \addlegendentry{$k=1$, $t=10^{-3}$}
        \addplot [thick, mark=*, mark options=solid, blue, densely dotted] table[x=MeshSize,y=Error] {outputs/analytical_solution/locref_k1_t1e-5/data_rates.dat};
        \addlegendentry{$k=1$, $t=10^{-5}$}
        \addplot [thick, mark=square*, black] table[x=MeshSize,y=Error] {outputs/analytical_solution/locref_k2_t1e-1/data_rates.dat};
        \addlegendentry{$k=2$, $t=10^{-1}$}
        \logLogSlopeTriangle{0.90}{0.4}{0.1}{3}{black};
        \addplot [thick, mark=square*, mark options=solid, black, dashed] table[x=MeshSize,y=Error] {outputs/analytical_solution/locref_k2_t1e-3/data_rates.dat};
        \addlegendentry{$k=2$, $t=10^{-3}$}
        \addplot [thick, mark=square*, mark options=solid, black, densely dotted] table[x=MeshSize,y=Error] {outputs/analytical_solution/locref_k2_t1e-5/data_rates.dat};
        \addlegendentry{$k=2$, $t=10^{-5}$}
        \addplot [thick, mark=o, olive] table[x=MeshSize,y=Error] {outputs/analytical_solution/locref_k3_t1e-1/data_rates.dat};
        \addlegendentry{$k=3$, $t=10^{-1}$}
        \logLogSlopeTriangle{0.90}{0.4}{0.1}{4}{black};
        \addplot [thick, mark=o, mark options=solid, olive, dashed] table[x=MeshSize,y=Error] {outputs/analytical_solution/locref_k3_t1e-3/data_rates.dat};
        \addlegendentry{$k=3$, $t=10^{-3}$}
        \addplot [thick, mark=o, mark options=solid, olive, densely dotted] table[x=MeshSize,y=Error] {outputs/analytical_solution/locref_k3_t1e-5/data_rates.dat};
        \addlegendentry{$k=3$, $t=10^{-5}$}
      \end{loglogaxis}            
    \end{tikzpicture}
    \subcaption{Loc. Refined mesh}
  \end{minipage}
  \caption{Error $E_h$ (see \eqref{eq:def.error}) with respect to the meshsize $h$ for the analytical solution of Section \ref{sec:test.analytical}.\label{fig:conv.analytical}}
\end{figure}
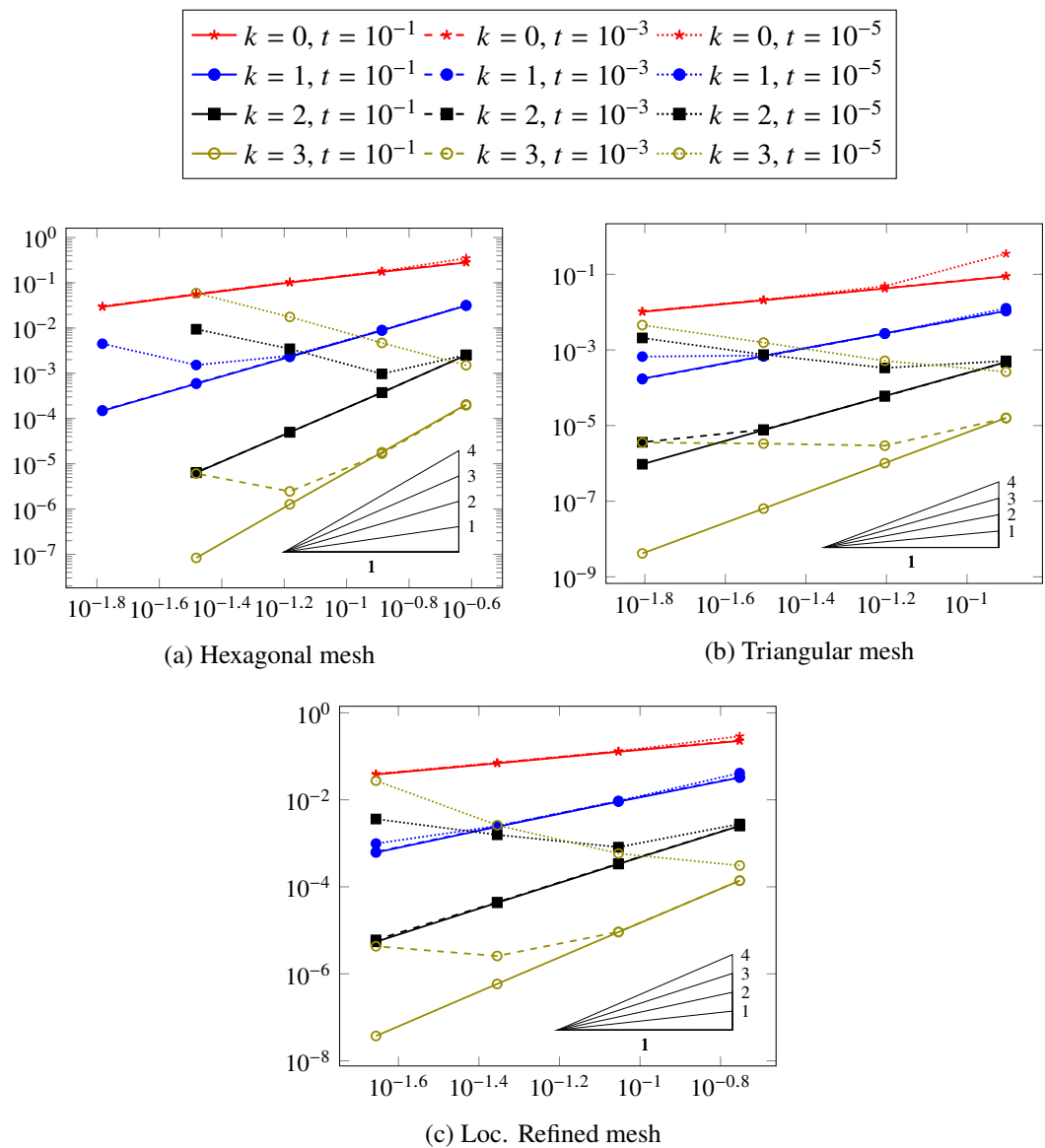

\subsection{Convergence to the Kirchhoff--Love solution in the limit $t\to 0$}\label{sec:tests.kirchoff}

As $t\to 0$, the solution to the Reissner--Mindlin model converges to the solution of the Kirchoff--Love model (see \cite[Section VII.3.1]{Brezzi.Fortin:91} for clamped boundary conditions, and especially Remark 3.4 therein). The tests presented in this section are designed to assess the ability of the lowest-order version $k=0$ of the DDR scheme \eqref{eq:discrete} to capture this behaviour, which is another way of assessing its robustness for small values of $t$. We fix the load $f$ such that the solution to the Kirchoff--Love model is $w(x,y)=\sin(\pi x)\sin(\pi y)$, and we measure the error by using \eqref{eq:def.error} with $(u,\bvec{\theta})=(w,\GRAD w)$. To demonstrate the flexibility of the DDR scheme, the simulations are run using (``soft'') simply supported boundary conditions, which consist in prescribing the displacement $u$ and the normal stress $(\tens{C}\GRADs\bvec{\theta})\normal$ on the boundary of the domain.

The first series of results, using the hexagonal and triangular meshes in Figure \ref{fig:meshes}, are presented in Figure \ref{fig:conv.kl}.
We see in Figure \ref{fig:conv.kl.h} that the scheme is fully robust with respect to $t$, showing a clear $\mathcal O(h)$ convergence rate for $t\in\{10^{-3},10^{-5}\}$;
for $t=10^{-1}$, the model error dominates, explaining the saturation of convergence (the beginning of a model error saturation is also visible for the last triangular mesh and $t=10^{-3}$). In Figure \ref{fig:conv.kl.t} we assess the said model error and see that, for the finest mesh of each family, an $\mathcal O(t)$ convergence rate is achieved until the numerical error dominates.

\begin{figure}\centering
  \begin{minipage}[b]{0.45\textwidth}\centering
    \begin{tikzpicture}[scale=0.85]
      \begin{loglogaxis}[
          legend columns=2,
          legend style={at={(0.5,1.1)},anchor=south}
        ]
        \addplot [thick, mark=star, red] table[x=MeshSize,y=EnError] {outputs/kirchoff_limit/hexa_k0_t1e-1/data_rates.dat};
        \logLogSlopeTriangle{0.90}{0.4}{0.1}{1}{black};
        \addlegendentry{Hex., $t=10^{-1}$}
        \addplot [thick, mark=*, blue] table[x=MeshSize,y=EnError] {outputs/kirchoff_limit/tri_k0_t1e-1/data_rates.dat};
        \addlegendentry{Tri., $t=10^{-1}$}
        \addplot [thick, mark=star, mark options=solid, red, dashed] table[x=MeshSize,y=EnError] {outputs/kirchoff_limit/hexa_k0_t1e-3/data_rates.dat};
        \addlegendentry{Hex., $t=10^{-3}$}
        \addplot [thick, mark=*, mark options=solid, blue, dashed] table[x=MeshSize,y=EnError] {outputs/kirchoff_limit/tri_k0_t1e-3/data_rates.dat};
        \addlegendentry{Tri., $t=10^{-3}$}
        \addplot [thick, mark=star, mark options=solid, red, densely dotted] table[x=MeshSize,y=EnError] {outputs/kirchoff_limit/hexa_k0_t1e-5/data_rates.dat};
        \addlegendentry{Hex., $t=10^{-5}$}
        \addplot [thick, mark=*, mark options=solid, blue, densely dotted] table[x=MeshSize,y=EnError] {outputs/kirchoff_limit/tri_k0_t1e-5/data_rates.dat};
        \addlegendentry{Tri., $t=10^{-5}$}
      \end{loglogaxis}            
    \end{tikzpicture}
    \subcaption{Energy error vs.~$h$ for various $t$\label{fig:conv.kl.h}}
  \end{minipage}
  \begin{minipage}[b]{0.45\textwidth}\centering
    \begin{tikzpicture}[scale=0.85]
      \begin{loglogaxis}[%
          legend columns=2,
          legend style={at={(0.5,1.1)},anchor=south}
        ]
        \addplot [thick, mark=star, red] table[x=t,y=EnError] {outputs/kirchoff_limit/hexa-mesh6-by-t.dat};
        \addlegendentry{Hexagonal}
        \logLogSlopeTriangle{0.90}{0.3}{0.1}{1}{black};
        \addplot [thick, mark=star, mark options=solid, blue, dashed] table[x=t,y=EnError] {outputs/kirchoff_limit/tri-mesh6-by-t.dat};
        \addlegendentry{Triangular}
      \end{loglogaxis}            
    \end{tikzpicture}
    \subcaption{Energy error vs.~$t$ on the finest meshes\label{fig:conv.kl.t}}
  \end{minipage}
  \caption{Energy errors $E_h$ (see \eqref{eq:def.error}) between the solution to the DDR scheme \eqref{eq:discrete} and the solution to the limiting Kirchoff--Love model, on the hexagonal and triangular meshes of Figure \ref{fig:meshes} (on the right, the finest hexagonal mesh has 103,041 elements, and the finest triangular mesh has 229,376 elements).\label{fig:conv.kl}}
\end{figure}
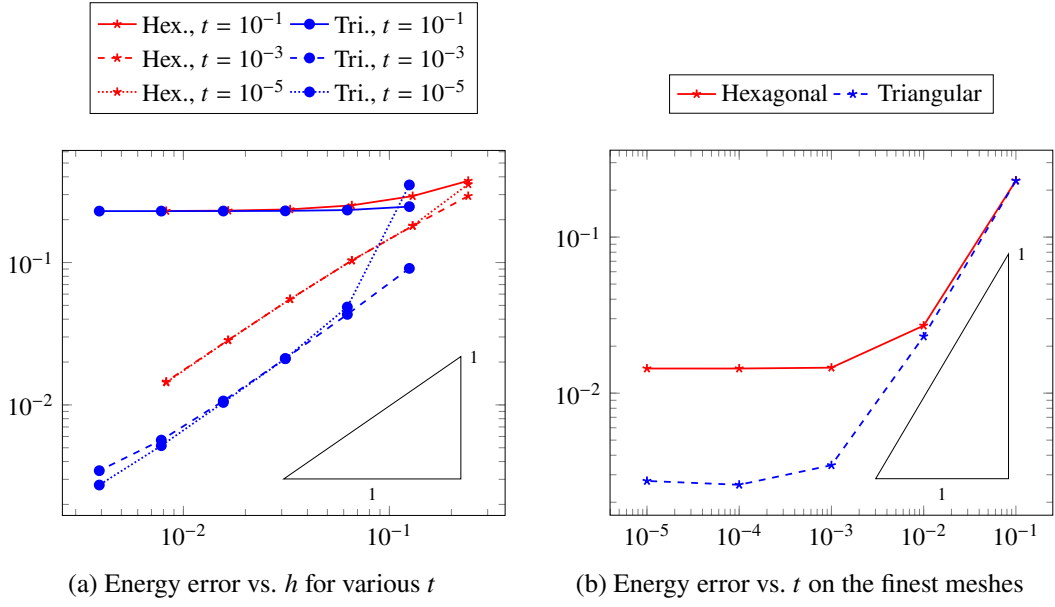

We also numerically compare the DDR scheme \eqref{eq:discrete} (still with $k=0$) to the stabilised $\Poly{2}$-$(\Poly{1}+\mathcal B^3)$ scheme of \cite{Chapelle.Stenberg:98} (where $\mathcal B^3$ is the bubble function). The errors for each of these schemes are measured in slightly different norms, which is natural given their different degrees of freedom; all these norms, however, mimic $H^1$-norms for both the displacement and the rotation. It should also be mentioned that \cite{Chapelle.Stenberg:98} uses a ``hard'' version of the simply supported boundary conditions, which are known to produce less severe boundary layers that the ``soft'' version used in our implementation \cite{Arnold.Falk:96}. In spite of this, the results in Table \ref{tab:comparison.stenberg} (corresponding to a family of uniformly refined triangular meshes, see Figure \ref{fig:unif.tri}) show that both methods produce similar relative errors on the displacement and rotation. This is moreover achieved with fewer degrees of freedom per triangle (12 for the $\Poly{2}$-$(\Poly{1}+\mathcal B^3)$ method, after static condensation of the bubble, vs.~9 for the DDR scheme), showing the potential interest of polygonal methods also on standard meshes.

\begin{figure}\centering
  \begin{minipage}{0.45\textwidth}\centering
    \includegraphics[height=4cm]{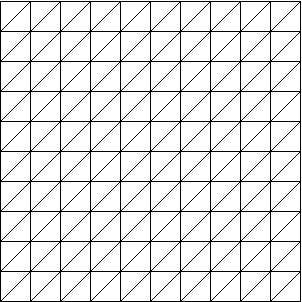}
    \subcaption{Triangular mesh \label{fig:unif.tri}}
  \end{minipage}
  \hspace{0.25cm}
  \begin{minipage}{0.45\textwidth}\centering
    \includegraphics[height=4cm]{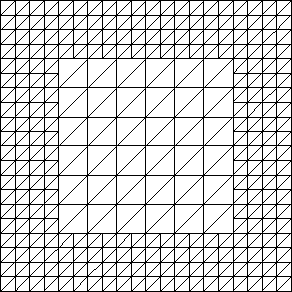}
    \subcaption{Refined triangular mesh \label{fig:refined.tri}}
  \end{minipage}
  \caption{Members of mesh families used in the numerical tests of Section \ref{sec:tests.kirchoff}.\label{fig:meshes.kirchoff}}
\end{figure}

\begin{table}
  \renewcommand*{\arraystretch}{1.2}
  \centering
  \begin{tabular}{c|cc|cc}
    \toprule
     & \multicolumn{2}{c|}{Stabilised $\Poly{2}$-$(\Poly{1}+\mathcal B^3)$ scheme} & \multicolumn{2}{c}{DDR scheme \eqref{eq:discrete}}\\
    \midrule
     nb.~triangles & Error on $u$ & Error on $\bvec{\theta}$ & Error on $u$ & Error on $\bvec{\theta}$\\
     800 & 2.82e-3 & 0.135 & 1.63e-3 & 4.34e-2\\
     3200 & 7.06e-4 & 6.75e-2 & 3.96e-4 & 2.14e-2 \\
     12800 & 1.77e-4 & 3.38e-2 & 1e-4 & 1.07e-2\\
    \bottomrule
  \end{tabular}
  \caption{Comparison, on the meshes \ref{fig:unif.tri}, between the stabilised $\Poly{2}$-$(\Poly{1}+\mathcal B^3)$ scheme of \cite{Chapelle.Stenberg:98} and the DDR scheme \eqref{eq:discrete} for $k=0$: $H^1$-like errors between the approximation solutions (with $t=10^{-3}$) and the solution to the limiting Kirchoff--Love problem.  
  \label{tab:comparison.stenberg}}
\end{table}

One of the main advantages of polytopal methods over usual finite elements lies in the seamless support of nonconforming local mesh refinement, without degradation of the mesh regularity factor and without any special treatment of hanging nodes. For the Reissner--Mindlin model, we know that the rotation variable develops a boundary layer; it therefore makes sense to use meshes that are locally refined around the boundary to better capture this layer without increasing too much the number of degrees of freedom (DOFs). In Table \ref{tab:comparison.stenberg.refined} we compare the stabilised $\Poly{2}$-$(\Poly{1}+\mathcal B^3)$ scheme on the regular triangular meshes \ref{fig:unif.tri} with the DDR scheme on locally refined meshes (see \ref{fig:refined.tri}; these meshes are the \texttt{tri2\_refined\_boundary} meshes in the \texttt{HArDCore2D} repository). As can be seen in this table, the usage of locally refined meshes enables the DDR scheme to capture the boundary layer on $\bvec{\theta}$ as well as the stabilised finite element scheme, but using a vastly reduced number of DOFs. The displacement $u$ is not as well approximated, but this is expected since the meshes are not refined inside the domain, where the displacement gradient is the largest; nevertheless, the bulk of the global energy error (on both variables) is born by $\bvec{\theta}$, and these results demonstrate that the flexibility of the DDR method enables for very efficient approximations of the whole model.

\begin{table}
  \renewcommand*{\arraystretch}{1.2}
  \centering
  \begin{tabular}{ccc|ccc}
    \toprule
     \multicolumn{3}{c|}{Stabilised $\Poly{2}$-$(\Poly{1}+\mathcal B^3)$ scheme} & \multicolumn{3}{c}{DDR scheme \eqref{eq:discrete}}\\
    \midrule
     nb.~DOFs & Error on $u$ & Error on $\bvec{\theta}$ & nb.~DOFs & Error on $u$ & Error on $\bvec{\theta}$\\
      2403 & 2.82e-3 & 0.135 & 550 & 3.44e-2 & 0.127\\
      9603 & 7.06e-4 & 6.75e-2 & 2121 & 8.32e-3 & 5.94e-2\\
      38402 & 1.77e-4 & 3.38e-2 & 8329 & 2.05e-3 & 2.89e-2\\
    \bottomrule
  \end{tabular}
  \caption{Comparison between the stabilised $\Poly{2}$-$(\Poly{1}+\mathcal B^3)$ scheme of \cite{Chapelle.Stenberg:98} (on the meshes \ref{fig:unif.tri}), and the DDR scheme \eqref{eq:discrete} for $k=0$ (on the meshes \ref{fig:refined.tri}): $H^1$-like errors between the approximation solutions (with $t=10^{-3}$) and the solution to the limiting Kirchoff--Love problem.  
  \label{tab:comparison.stenberg.refined}}
\end{table}

\section{Conclusions}

  In this work we have introduced a novel numerical scheme for the Reissner--Mindlin problem with appealing features, the most prominent of which is the support of general polygonal meshes and arbitrary approximation orders.
  We have proved optimal error estimates that are additionally fully robust with respect to the plate thickness for the lowest-order version of the method.
  As far as this lowest-order version is concerned, the numerical tests confirm full robustness and compare well with classical methods when meshes supported by the latter are used;
  through nonconforming mesh refinement, on the other hand, the proposed method can achieve a dramatic reduction of the number of degrees of freedom required to achieve a given accuracy.
  Higher-order versions, although suffering from a degradation of convergence for small values of the thickness, appear interesting in terms of precision versus meshsize when moderate values are considered.
  The loss of precision for small values of the thickness is probably not specific to the method presented in this work, but rather points out a general limitation of higher-order schemes for the specific problem at hand.

\section*{Acknowledgements}

The authors acknowledge the partial support of \emph{Agence Nationale de la Recherche} grant NEMESIS (ANR-20-MRS2-0004).


\printbibliography

\end{document}